\providecommand{\pgfsyspdfmark}[3]{}

\documentclass[11pt]{article}
\usepackage{mathrsfs}
\usepackage{leftindex}
\usepackage{amssymb}
\usepackage{cite}
\usepackage[all]{xy}
\usepackage{graphicx}
\usepackage{textcomp}
\usepackage{charter}
\usepackage[vflt]{floatflt}
\usepackage{mathtools}
\usepackage{enumerate}
\usepackage{wasysym}
\usepackage{rotating}
\usepackage{pdflscape}
\usepackage{fullpage}
\usepackage{authblk}
\usepackage{comment}
\usepackage{bm}
\usepackage[pdftex,colorlinks]{hyperref}
\hypersetup{
bookmarksnumbered,
pdfstartview={FitH}}

\usepackage{tikz-cd}
\usepackage{nicematrix}
\usepackage{colortbl}
\usepackage{color}
\usepackage[dvipsnames]{xcolor}

\usepackage{blkarray}
\usepackage{amsmath}
\usepackage{amsthm}
\usepackage{esint}

\usepackage{setspace}
\usepackage{amsfonts}
\usepackage{eucal}
\usepackage{latexsym}
\usepackage{mathdots}
\usepackage{mathrsfs}
\usepackage{enumitem}

\newtheorem{thm}{Theorem}[section]
\newtheorem{prop}[thm]{Proposition}
\newtheorem{lem}[thm]{Lemma}
\newtheorem{lemma}[thm]{Lemma}

\newtheorem{cor}[thm]{Corollary}
\newtheorem*{thm*}{Theorem}
\newtheorem*{cor*}{Corollary}
\newtheorem*{prop*}{Proposition}

\theoremstyle{definition}
\newtheorem{defn}[thm]{Definition}

\theoremstyle{remark}
\newtheorem{remark}[thm]{Remark}

\numberwithin{equation}{section}

\newcommand{\be}{\begin{equation}}
\newcommand{\ee}{\end{equation}}
\def\ba{\begin{eqnarray*}}
\def\ea{\end{eqnarray*}}

\newcommand{\bi}{\begin{itemize}}
\newcommand{\ei}{\end{itemize}}
\newcommand{\bn}{\begin{enumerate}}
\newcommand{\en}{\end{enumerate}}
\newcommand{\bbm}{\begin{bmatrix}}
\newcommand{\ebm}{\end{bmatrix}}
\newcommand{\bpm}{\begin{pmatrix}}
\newcommand{\epm}{\end{pmatrix}}
\newcommand{\bsm}{\left ( \begin{smallmatrix}}
\newcommand{\esm}{\end{smallmatrix} \right) }

\newcommand{\mr}{\ensuremath{\mathrm}}
\newcommand{\scr}{\ensuremath{\mathscr}}
\newcommand{\mbf}{\ensuremath{\boldsymbol}}

\newcommand{\mf}{\ensuremath{\mathfrak}}
\newcommand{\ov}{\ensuremath{\overline}}

\newcommand{\wt}{\ensuremath{\widetilde}}

\newcommand{\Ga}{\ensuremath{\Gamma}}

\newcommand{\Om}{\ensuremath{\Omega}}

\newcommand{\La}{\ensuremath{\Lambda }}
\newcommand{\la}{\ensuremath{\lambda }}
\newcommand{\om}{\ensuremath{\omega}}

\newcommand{\eps}{\ensuremath{\epsilon }}

\def\C{\mathbb{C}}
\def\R{\mathbb{R}}

\def\N{\mathbb{N}}
\def\B{\mathbb{B}}

\newcommand{\cO}{\mathcal{O}}

\newcommand{\fps}{\C\langle \langle \fz \rangle \rangle}

\def\vds{\vdots \vspace{0.2cm} }

\def\fz{\mathfrak{z}}

\def\fp{\mathbb{C} \langle \fz \rangle }
\def\fpd{\mathbb{C} \langle \fz _1, \cdots, \fz_d \rangle }
\def\fps{\mathbb{C} \langle \! \langle  \fz  \rangle \! \rangle}
\def\fpsd{\mathbb{C} \langle \! \langle  \fz _1, \cdots, \fz _d  \rangle \! \rangle}
\def\mrt{\mathrm{t}}

\def\hardy{\mathbb{H} ^2 _d}

\newcommand{\ip}[2]{\ensuremath{\left\langle {#1} , {#2} \right\rangle}}

\def\nbran{\mr{Ran} \, }
\def\nbker{\mr{Ker} \, }
\def\nbdim{\mr{dim} \, }

\def\fskew{\C \ \mathclap{\, <}{\left( \right.}   \fz  \mathclap{  \, \, \, \, \, >}{\left. \right)} \, \, }

\def\fskewd{\C \ \mathclap{\, <}{\left( \right.}   \fz _1, \cdots, \fz _d  \mathclap{  \, \, \, \, \, >}{\left. \right)} \, \, }

\def\bdn{\mathbb{B} ^{(n\times n)\cdot d}}

\def\cdm{\mathbb{C} ^{(m\times m)\cdot d}}
\def\cdn{\mathbb{C} ^{(n\times n)\cdot d}}

\def\Ups{\Upsilon}

\def\cH{\mathcal{H}}
\def\cJ{\mathcal{J}}
\def\cK{\mathcal{K}}
\def\F{\mathbb{F} _d ^+}
\def\ncu{\mathbb{C} ^{(\N \times \N) \cdot d}}
\def\ncun{\mathbb{C} ^{(\N n \times \N n) \cdot d}}

\def\rball{\mathbb{B} ^{(\N \times \N) \cdot d}}

\newcommand{\norm}[1]{{\lVert #1 \rVert}}

\newcommand{\rownorm}[1]{\lVert #1 \rVert_{\mr{row}} }

\newcommand{\colnorm}[1]{\lVert #1 \rVert_{\mr{col}} }

\newcommand\restr[2]{{
  \left.\kern-\nulldelimiterspace 
  #1 
  \right|_{#2} 
  }}

\newcommand{\opp}{\mr{opp}}
\DeclareMathOperator{\GL}{GL}

\DeclareMathOperator{\alg}{Alg}

\DeclareMathOperator{\tr}{tr}

\DeclareMathOperator{\FR}{FR}
\DeclareMathOperator{\TC}{TC}
\DeclareMathOperator{\HS}{HS}

\newcommand{\cR}{\mathscr{R}}
\renewcommand{\cO}{\scr{O}}

\title{Skew fields of non-commutative meromorphic functions affiliated to finite operator algebras}

\author[1]{M\'eric L. Augat \thanks{M. L. A was partially supported by NSF grant DMS-2155033}}
\affil[1]{\footnotesize James Madison University, \texttt{augatmx@jmu.edu}}

\author[2]{Robert T. W. Martin \thanks{R. T. W. M. partially supported by NSERC grant 2026-06481}}
\affil[2]{\footnotesize University of Manitoba, \texttt{Robert.Martin@umanitoba.ca}}

\author[3]{Eli Shamovich \thanks{E. S. was partially supported by BSF grant 2022235}}
\affil[3]{\footnotesize Ben-Gurion University of the Negev, \texttt{shamovic@bgu.ac.il}}

\author[4]{Maximilian Tornes \thanks{M.T. is partially supported by a University of Manitoba Graduate Fellowship}}
\affil[4]{\footnotesize University of Manitoba, \texttt{maximilian.tornes@umanitoba.ca}}

\date{}

\begin{document}
\maketitle
\vspace{-.75cm}

\begin{abstract}
We embed skew fields of meromorphic functions in several non-commuting variables into the topological $*-$algebra of closed linear operators affiliated to a finite von Neumann algebra, $\mf{M}$, with faithful, normal tracial state, $\tau$. Namely, let $\scr{O} _d (r)$, $r \in (0, +\infty]$, denote the rings of formal power series in $d \in \N$ non-commuting variables with ``radius of convergence" at least $r$ and let $\Xi \in \mf{M} ^{1\times d}$ be a tuple of row-norm $<r$ with the property that there exist no non-zero trace-class operators $T_1, \cdots, T_d \in \scr{B} (L^2 (\mf{M}, \tau))$ obeying $\sum _j [T_j , \Xi _j] =0$. Then the unital homomorphism $\scr{O} _d (r) \rightarrow \mf{M}$ defined by the evaluation, $\fz _j \mapsto \Xi _j$, extends to an embedding of the universal skew field of fractions of $\scr{O} _d (r)$ into the $*-$algebra of closed operators affiliated to $\mf{M}$. 

In their $L^2-$homology theory of von Neumann algebras, Connes and Shlyakhtenko defined a certain quantity, $0 \leq \Delta (\Xi) \leq d$, as the Murray--von Neumann co-dimension of the Hilbert module generated by all $d-$tuples of finite rank operators obeying the above commutator condition with $\Xi \in \mf{M} ^{1\times d}$. As demonstrated by Mai, Speicher and Yin, this $\Delta$ measures, in a certain precise sense, the algebraic independence of the components of $\Xi$. Connes--Shlyakhtenko proved that $\Delta$ is an upper bound for Voiculescu's non-microstates free entropy dimension, $\delta ^* (\Xi)$, and remark that it is of interest to determine whether these two quantities are equal. Using non-commutative function theory, we show that this is generally not the case, by constructing a triple of self-adjoint operators, $\Xi = (\Xi_1,\Xi_2,\Xi_3)$, for which this inequality is strict, $\delta^*(\Xi) < \Delta(\Xi)$. 
\end{abstract}

\section{Introduction}

Let $\mf{M}$ be a finite von Neumann algebra with faithful, normal tracial state $\tau$. 
We will typically identify $\mf{M}$ with its image under the Gelfand--Naimark--Segal (GNS) representation of $\tau$, $\mf{M} \subseteq \scr{B} (L^2 (\mf{M}, \tau))$, where $L^2 (\mf{M}, \tau) =: \cJ$ denotes the GNS space of $\tau$. 
Let $\wt{\mf{M}}$ denote the topological $*-$algebra of closed linear operators affiliated to $\mf{M}$, equipped with Nelson's measure topology \cite{Nelson}. 
If $a$ is affiliated to $\mf{M}$, we will sometimes write $a \sim \mf{M}$ in place of $a \in \wt{\mf{M}}$.

In their $L^2-$homology theory of finite von Neumann algebras, Connes and Shlyakhtenko introduced the following quantity, $\Delta$, for any $d$-tuple $\Xi := ( \Xi _1, \cdots,  \Xi _d ) \in \mf{M} ^{1 \times d}$,  
\be \Delta (\Xi) := d - \mr{dim} _{\mf{M} \ov{\otimes} \mf{M} ^{\mr{op}}} \, \left\{  T \in \FR( L^2 (\mf{M}, \tau ) ) ^{1\times d} \left| \ \sum _{j=1} ^d [T_j, \Xi _j] =0 \right. \right\} ^{-\| \cdot \|_{\mr{HS}}}, \label{Delta} \ee
where $\FR(L^2(\mf{M},\tau))$ denotes the two-sided ideal of all finite-rank linear operators on $L^2 (\mf{M}, \tau)$, the closure is taken in the Hilbert--Schmidt norm, and the dimension is the Murray-von Neumann dimension of the Hilbert module. (The above Equation (\ref{Delta}) is an equivalent and slight reformulation of the original formula for $\Delta$ of \cite{CS-hom}, following an observation of Mai--Speicher--Yin \cite[Remark 3.2]{freefield}.)
This quantity obeys $0 \leq \Delta (\Xi ) \leq d$ and it is designed to measure the co-dimension of the module of algebraic relations between the components of $\Xi$.
In the original definition of $\Delta$, the set $\Xi$ was assumed to be self-adjoint in the sense that for every $1 \leq j \leq d$, there exists $1 \leq k \leq d$, such that $\Xi_j^* = \Xi_k$.
However, as observed by Mai, Speicher and Yin in \cite{freefield}, $\Delta (\Xi)$ is still a well-defined and useful quantity for an arbitrary tuple $\Xi \in \mf{M} ^{1 \times d}$.
We will refer to $\Delta$ as the \emph{algebraic relation co-dimension}; this nomenclature is motivated by an interesting and important result of Mai, Speicher, and Yin, \emph{viz.}, that $\Delta$ measures the algebraic independence of the operators $\Xi_1,\cdots,\Xi_d$ in the following sense \cite{freefield}.
Letting $\C \langle \fz \rangle = \C \langle \fz_1,\cdots,\fz_d \rangle$ denote the free algebra on $d$ generators, we obtain a natural homomorphism of evaluation at $\Xi$, $\C \langle \fz \rangle \to \mf{M}$, $\fz _j \mapsto \Xi _j$.
In \cite[Theorem 1.1]{freefield}, Mai, Speicher and Yin prove that $\Delta (\Xi ) = d$ is maximal if and only if for every ``full" matrix $A \in \fp ^{n \times n}$, $A(\Xi)$ has no kernel or co-kernel.
Here, a matrix, $A \in \scr{R} ^{n \times n}$ over a ring, $\scr{R}$, is called \emph{full}, if there exists no factorization $A = B C$, where $B$ is an $n \times r$ matrix over $\scr{R}$, $C$ is an $r \times n$ matrix over $\scr{R}$, and $r < n$. Intuitively speaking, a matrix is non-full if its image under any homomorphism into a skew field is non-invertible.
Precisely, \cite[Theorem 1.1, (i) $\Leftrightarrow$ (v)]{freefield} states that if $\Xi \in \mf{M} ^{1 \times d}$ is any tuple, then $\Xi$ has maximal algebraic relation co-dimension, $\Delta (\Xi) =d$, if and only if the homomorphism, $\fz _j \mapsto \Xi _j$, extends to an embedding of $\fskew = \fskewd$, the free skew field of all non-commutative (NC) rational functions in the variables $\fz = (\fz_1, \cdots, \fz _d)$, into $\wt{\mf{M}}$, the $*-$algebra of closed linear operators affiliated to $\mf{M}$.
Here, the free skew field of NC rational functions is the universal localization of the free algebra, \emph{i.e.}, as first proven by S.A. Amitsur, $\fskew$ is the \emph{universal skew field of fractions of the free algebra} \cite{Amitsur}.
A general theory of localization of non-commutative rings was developed by P.M. Cohn in \cite{Cohn}, who proved in particular, that any \emph{semi-free ideal ring} (\emph{semifir}), \emph{i.e.} any ring with the property that every finitely generated left/right ideal is free as a left/right module, has a universal skew field of fractions.

In \cite{CharShly-analytic}, Charlesworth and Shlyakhtenko introduced an analytic variant of the above quantity,
$$ \Delta_1 (\Xi) := d - \mr{dim} _{\mf{M} \ov{\otimes} \mf{M} ^{\mr{op}}} \, \left\{  T \in \TC( L^2 (\mf{M}, \tau ) ) ^{1\times d} \left| \ \sum _{j=1} ^d [T_j, \Xi _j ] =0 \right. \right\} ^{-\| \cdot \|_{\mr{HS}}}, $$ where $\mr{TC} (L ^2 (\mf{M} , \tau))$ denotes the two-sided ideal of trace-class linear operators on the tracial GNS space. 
We will refer to $\Delta _1 (\Xi)$ as the analytic relation co-dimension of $\Xi$. 
Indeed, Charlesworth and Shlyakhtenko prove that if $\Delta _1 (\Xi) =d$ is maximal then $\Xi$ obeys no analytic relations in the following sense \cite[Theorem 4]{CharShly-analytic}: If $f$ is an analytic formal power series with ``radius of convergence" $r >0$ and $\max_{1 \leq j \leq d} \| \Xi_j \| < r$, then $f(\Xi) \neq 0$.
Our first main result extends both \cite[Theorem 1.1 (v) $\Rightarrow$ (iii),(i)]{freefield} and \cite[Theorem 4]{CharShly-analytic} (albeit for a different operator space norm on the $d$-tuples $\Xi$ for the latter).
To state it, we require some definitions from \cite{AMS-ncentire}.
Recall that the NC open unit row-ball is defined as
$$  \rball := \bigsqcup_{n=1}^{\infty} \bdn; \quad \quad \bdn := \left\{ X \in \C ^{n\times n} \otimes \C ^{1\times d} \left| \ \sum_{j=1}^d X_j X_j^* < 1 \right. \right\}. $$
For any $0 < r \leq + \infty$, let $\scr{O}_d(r)$ denote the ring of all NC functions that are analytic in $r  \cdot \rball$ and bounded on sub-balls (see the next section for more details).
We will typically abbreviate $\scr{O}_d := \scr{O}_d(\infty)$. In \cite{AMS-ncentire}, the authors proved that each ring $\scr{O} _d (r)$, $r \in (0, +\infty]$, is a semifir, and hence, by the universal localization theory of P. M. Cohn, each $\scr{O} _d (r)$ admits a universal skew field of fractions that we will denote by $\scr{M}_d(r)$ (and again we abbreviate $\scr{M}_d := \scr{M}_d(\infty)$).

\begin{thm*}[Theorem \ref{main1}]
    Let $\Xi = ( \Xi_1,\cdots,\Xi_d ) \in \mf{M} ^{1 \times d}$ be such that $\Delta_1(\Xi) = d$. Then, given any full matrix $A \in \scr{O} _d (r) ^{n \times n}$, $A(\Xi)$ is invertible as an affiliated operator to $\mf{M} ^{n \times n}$, \emph{viz.}, $A (\Xi)$ has no kernel or co-kernel.  Therefore, the homomorphism of evaluation at $\Xi$ extends to an embedding $\scr{M}_d (r) \hookrightarrow \wt{\mf{M}}$.
\end{thm*}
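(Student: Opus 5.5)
Following Mai--Speicher--Yin, we reduce the statement to showing that $A(\Xi)$ has trivial kernel for every full $A \in \scr{O}_d(r)^{n\times n}$. In a finite von Neumann algebra, an element of $\mf{M}^{n\times n}$ with trivial kernel also has dense range and is therefore invertible in $\wt{\mf{M}^{n\times n}}$. Once every full matrix is inverted under the evaluation homomorphism $\scr{O}_d(r) \to \mf{M} \subseteq \wt{\mf{M}}$, Cohn's universal localization theory applies. Since $\scr{O}_d(r)$ is a semifir by \cite{AMS-ncentire}, a homomorphism into a skew field that inverts all full matrices extends to $\scr{M}_d(r)$. The division closure of the image inside $\wt{\mf{M}}$ is then a skew field (as in \cite{freefield}, using the fact that $\wt{\mf{M}}$ is regular and the rational closure argument), and a homomorphism out of a skew field is automatically injective. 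This gives the embedding.

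For the kernel statement, we argue by contradiction, in the style of the $\Delta$ argument of \cite{freefield} but with trace-class operators replacing finite-rank ones. Suppose $A$ is full and $A(\Xi)$ has a nonzero kernel projection $P$, with cokernel projection $Q$ of the same trace $\tau_n(P)=\tau_n(Q)>0$. The plan has four steps.

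\emph{Step 1.} Expand $A$ in the free difference-quotient (Taylor--Taylor) form $A(Z) - A(W) = \sum_j \partial_j A(Z,W)\,(Z_j - W_j)$. Here $\partial_j A$ are NC functions of two matrix arguments, still convergent on $r\cdot\rball$. Because $\|\Xi\|_{\mr{row}}<r$ and there is a sub-ball bound, the evaluation $\partial_j A(\Xi\otimes 1, 1\otimes \Xi^{\mr{op}})$ converges absolutely in the projective tensor norm of $\mf{M}\otimes\mf{M}^{\mr{op}}$.

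\emph{Step 2.} Realize elements of $\mf{M}\hat\otimes \mf{M}^{\mr{op}}$ acting on finite-rank operators. Given vectors $\xi\in \nbran P$ and $\eta\in(\nbran Q)^\perp$-type cokernel vectors, form $T_j := \partial_j A(\Xi,\Xi)\#(\xi\otimes\eta^*)$ and take suitable matrix entries. The identity $\sum_j[T_j,\Xi_j] = A(\Xi)(\xi\eta^*) - (\xi\eta^*)A(\Xi) = 0$ follows from $A(\Xi)\xi=0$ and $\eta^*A(\Xi)=0$. By the absolute convergence of Step 1, the $T_j$ are trace class, not merely finite rank. This is where $\Delta_1$ enters instead of $\Delta$.

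\emph{Step 3.} Letting $\xi,\eta$ range over the kernel and cokernel, the resulting tuples generate a Hilbert $\mf{M}\ov\otimes\mf{M}^{\mr{op}}$-module. Its dimension should be bounded below by a quantity like $\tau_n(P)\tau_n(Q)$ times the ``rank defect'' of the matrix $(\partial_j A)$. Hence $\Delta_1(\Xi)<d$, a contradiction, unless this module is zero.

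\emph{Step 4.} The module being zero means that $\partial_j A(\Xi,\Xi)$ compressed by $P,Q$ vanishes. We must then show this forces $A$ to be non-full. This is the main obstacle: relating vanishing of the compressed derivative to a factorization $A=BC$ over $\scr{O}_d(r)$ with inner dimension $<n$. The plan is to mimic the MSY linearization argument: reduce $A$ (after stable associativity) to a matrix whose $\partial$-structure is controlled, and use the semifir property and the inner rank characterization of fullness from \cite{AMS-ncentire}. Concretely, one shows that the inner rank of $A$ is at least the $\mf{M}$-rank of $A(\Xi)$ plus the defect detected by $\Delta_1$, so that $\Delta_1(\Xi)=d$ gives inner rank equal to $\mr{rank}_\tau A(\Xi)$, and fullness then gives $P=0$. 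If a direct inner-rank inequality proves hard, the fallback is an induction on $n$ via Cohn's theory: for a full $A$ over a semifir, pass to a linear-in-derivatives block form, where the MSY argument applies essentially verbatim with trace-class estimates.
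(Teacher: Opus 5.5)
Your reduction to the kernel statement and the passage to $\scr{M}_d(r)$ are fine. Cohn's universal property applies to full-matrix-inverting homomorphisms into any ring, so the division-closure detour is unnecessary; the paper instead proves the well-definedness result, Theorem \ref{evalthm}.

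Steps 1--3 follow Charlesworth--Shlyakhtenko and can be repaired, but not as written. With $T_j=\partial_jA(\Xi,\Xi)\#(\xi\eta^*)$ the sum $\sum_j[T_j,\Xi_j]$ does not telescope; try a degree-three monomial. You need the flipped legs, $T_j=\sum_{\omega=ujv}\hat{A}_\omega\,\Xi^{v}\xi\eta^*\Xi^{u}$ (contracted over the matrix indices), which gives $\sum_j[\Xi_j,T_j]=[A(\Xi),\xi\eta^*]=0$. The hypothesis $\|\Xi\|_{\mr{row}}<r$ does not give projective-norm convergence of $\partial_jA$. Trace-classness of $T_j$ has to be checked directly, e.g.\ for $\xi=Px$ with $x\in\mf{M}^n$, using traciality to trade $\sum_{|v|=l}\Xi^{v*}\Xi^v$ for $\sum_{|v|=l}\Xi^{v}\Xi^{v*}$. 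Step 3 is superfluous: $\Delta_1(\Xi)=d$ forces the closed relation module to be $0$, so every $T_j$ vanishes.

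The genuine gap is Step 4, which is the whole difficulty and which you leave as a plan. From the vanishing of these $T_j$ you must contradict fullness of $A$ over $\scr{O}_d(r)$, and the MSY route is unavailable. MSY linearize to a full affine pencil with finitely many scalar coefficient matrices. There the vanishing relations give zero blocks $P_gA_kP_f=0$ with finite-rank $P_f,P_g$, and finite-dimensional rank counting (hollowness) contradicts fullness. Elements of $\scr{O}_d(r)$ are not rational, so no such finite linearization exists. A "linear block form over the semifir" has coefficients in $\scr{O}_d(r)$, not scalars. A scalar-coefficient linearization is an operator realization on an infinite-dimensional space, where zero blocks carry no dimension information. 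Moreover, a full $A$ may be singular at every scalar point (e.g.\ $[\fz_1,\fz_2]$), so matrix points cannot be avoided. Your inequality ``inner rank $\geq$ $\mf{M}$-rank plus defect'' is essentially the content of the theorem, and you offer no mechanism for it.

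The paper supplies the following:
\begin{enumerate}
\item A realization of $H$ about a matrix point $Y$ near $0$ with $H(Y)$ invertible and $\|A\|_{\mr{col}}(\|\Xi\|_{\mr{row}}+\|Y\|_{\mr{row}})<1$ (Propositions \ref{odrreal}, \ref{realatY}, via the NC Rota theorem).
\item The Schur-complement isomorphism of Proposition \ref{Schursing}, carrying $\nbker H(I_n\otimes\Xi)$ onto the kernel of the monic operator pencil realizing $H^{-1}$.
\item Gramian support projections with Lemma \ref{kercomm} and Theorem \ref{zerocomp}, giving $QL_{A^{(-1)}}(-Y)Q=0$ after compressing so that $Q\le P$.
\item The decisive observation that $L_{QA|_{\mr{ran}\,Q}}(-Y)$ is invertible yet equals the finite-rank operator $QB(Y)D^{-1}C|_{\mr{ran}\,Q}$, so $\mr{ran}\,Q$ is finite-dimensional.
\item The amplification $\Xi\mapsto\Ups$ inside $e_{11}\bigl((\mf{M},\tau)*_\C(\C^{n\times n},\mr{tr}_n)\bigr)e_{11}$. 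This preserves maximal $\Delta_1$ (Proposition \ref{thm:delta_equality}) and turns the compressed pencil into a finite monic pencil at $0$, to which Proposition \ref{invatzero} applies.
\end{enumerate}
Nothing in your plan plays the role of the finite-dimensionality step or of the reduction from the matrix centre $Y$ to the origin.
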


In particular, if $\Delta _1 (\Xi ) = d$ is maximal, the $d-$tuple $\Xi = (\Xi _1, \cdots, \Xi _d)$ is free of analytic relations in the sense that for any $f \in \scr{O} _d (r)$, $r > \| \Xi \| _{\mr{row}}$, $f (\Xi ) \in \mf{M}$ has no kernel or co-kernel. This can be viewed as a refinement of \cite[Theorem 4]{CharShly-analytic}, although their definition of radius of convergence differs from ours in that it corresponds to equipping $\C ^d$ with a different operator space structure (the minimum, $\ell ^\infty$, or ``polydisk" operator space structure) than the row operator space structure we employ here. 

Given any $d-$tuple, $\Xi \in \mf{M} ^{1 \times d}$, it is clear that $\Delta (\Xi ) \geq \Delta _1 (\Xi)$, by definition. Connes and Shlyakhtenko further prove that if $\Xi$ is a self-adjoint tuple then $\Delta (\Xi)$ is an upper bound for Voiculescu's non-microstates free entropy dimension, $\delta ^* (\Xi)$ \cite{Voic-FisherV,Voic-free_entropy_survey}, and Charlesworth and Shlyakhtenko show that this same inequality also holds for $\Delta _1 (\Xi)$ \cite{CharShly-analytic}. Namely, if $\Xi = (\Xi _1, \cdots, \Xi _d)$ is a self-adjoint tuple, we have the chain of inequalities,
$$ \delta^*(\Xi) \leq \Delta_1(\Xi) \leq \Delta(\Xi). $$

Our second main result is that the rightmost inequality can be strict.

\begin{thm*}[Theorem \ref{thm:counter_example}]
Let $f (x,y) = \sum_{n=0}^{\infty} \frac{1}{n!} x^n y^n \in \scr{O} _2$, and let $\Xi =(\Xi_1, \Xi_2)$ be two standard free semicirculars, where $\mf{M} = W^*(\Xi)$ is the von Neumann algebra generated by $\Xi$. Then, 
$$ \Delta_1(\Xi_1,\Xi_2, f(\Xi_1,\Xi_2)) < 3 = \Delta(\Xi_1,\Xi_2,f(\Xi_1,\Xi_2)). $$
Similarly, if $g(x,y) = \sum_{n=0}^{\infty} \frac{1}{n!} x^n y^n x^n \in \scr{O} _2$, then $$2=\delta ^* (\Xi _1, \Xi _2, g (\Xi _1, \Xi _2)) \leq \Delta _1 (\Xi_1,\Xi_2,g(\Xi_1,\Xi_2)) < 3 = \Delta(\Xi_1, \Xi_2, g(\Xi_1,\Xi_2)).$$
\end{thm*}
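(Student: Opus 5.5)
The bound $\Delta_1 < 3$ is the easy half. Write $\Xi_3 := f(\Xi_1,\Xi_2)$ and consider the NC entire function $h(\fz_1,\fz_2,\fz_3) := \fz_3 - f(\fz_1,\fz_2) \in \scr{O}_3$. It is a non-zero element of $\scr{O}_3 = \scr{O}_3(\infty)$, and the triple $(\Xi_1,\Xi_2,\Xi_3)$ has finite row norm, yet $h(\Xi_1,\Xi_2,\Xi_3) = 0$. A non-zero $1\times 1$ matrix over the semifir $\scr{O}_3$ is full. So if $\Delta_1(\Xi_1,\Xi_2,\Xi_3) = 3$, Theorem \ref{main1} would force $h(\Xi)$ to have no kernel, which is a contradiction. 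The same argument works verbatim for $g$. For $g$, the equality $\delta^* = 2$ should follow from the known behaviour of $\delta^*$ under adjoining an element of the von Neumann algebra (or analytic functional calculus) of a free semicircular pair: $\delta^*(\Xi_1,\Xi_2) = 2$, and adjoining $g(\Xi_1,\Xi_2)$ does not change $\delta^*$. Here one uses that $g$ is self-adjoint, being a palindromic series with real coefficients. The middle inequality is the Charlesworth--Shlyakhtenko bound.

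For $\Delta = 3$, I will use the Mai--Speicher--Yin criterion \cite[Theorem 1.1]{freefield}. It suffices to show that every full matrix over $\C\langle \fz_1,\fz_2,\fz_3\rangle$ evaluates at $(\Xi_1,\Xi_2,\Xi_3)$ to an invertible affiliated operator. First, free semicirculars satisfy $2 = \delta^*(\Xi_1,\Xi_2) \le \Delta_1(\Xi_1,\Xi_2)$, so Theorem \ref{main1} gives an embedding $\scr{M}_2 \hookrightarrow \wt{\mf{M}}$ extending $\fz_j \mapsto \Xi_j$. The evaluation at $\Xi$ factors through the substitution homomorphism $\phi : \C\langle \fz_1,\fz_2,\fz_3\rangle \to \scr{O}_2$, $\fz_1\mapsto \fz_1$, $\fz_2 \mapsto \fz_2$, $\fz_3 \mapsto f$. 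The problem therefore reduces to a purely function-theoretic statement: $\phi$ is \emph{honest}, i.e. it sends full matrices to full matrices over $\scr{O}_2$. Equivalently, $\phi$ extends to an embedding $\fskewd \hookrightarrow \scr{M}_2$ with $d=3$.

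To prove honesty, I first use Higman's linearization trick to reduce to a full linear pencil $L = A_0 + A_1\fz_1 + A_2\fz_2 + A_3\fz_3$. I then use the fact that a square matrix over $\scr{O}_2$ is full as soon as it is invertible at some matrix point $X \in \C^{(n\times n)\cdot 2}$: a factorization through $r<m$ would bound the rank at every point. So it suffices to find $X=(X_1,X_2)$ with $L(X_1,X_2,f(X_1,X_2))$ invertible. Since $L$ is full over the free algebra, the set of $(Y_1,Y_2,Y_3)$ of size $n$ at which $L$ is invertible is a non-empty Zariski-open subset for all large $n$ (Cohn/Amitsur). The task is then to show that the graph $\{(X, f(X))\}$ is not contained in the proper subvariety $\{\det L = 0\}$.

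I expect this to be the main obstacle. My plan is to exploit the structure of $f = \sum x^n y^n / n!$ through nilpotent block points. I take $X_1, X_2$ to be block-weighted shifts, $X_1 = \sum_k E_{k,k+1}\otimes P_k$ and $X_2 = \sum_k E_{k+1,k}\otimes Q_k$, suitably interleaved, so that $f(X)$ becomes an explicit finite polynomial in the $P_k, Q_k$. The aim is to arrange that $f(X)$ can be prescribed essentially independently of $(X_1,X_2)$ on a compression, for instance by choosing the arrangement so that only the $n=1$ and one large-$n$ term survive in a given corner. This would show that the Zariski closure of the graph at size $N$ contains a copy of all triples of a smaller size $n$, up to direct summands on which $L$ is already invertible. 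Since invertibility of $L$ is preserved under direct sums and compressions of this type, one obtains a point of the graph where $L$ is invertible. If this direct construction fails, the fallback is to show that $f$ is transcendental over the rational closure of $\C\langle\fz_1,\fz_2\rangle$ in $\scr{M}_2$ in Cohn's strong (``free'') sense. The tool there would be its infinite Hankel rank compared with the finite Hankel rank of NC rational functions, adapted to matrix pencils via realization theory.
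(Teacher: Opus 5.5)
\textbf{Verdict: there is a genuine gap.} Your overall architecture matches the paper's.
\begin{itemize}
\item $\Delta_1<3$ comes from the full $1\times 1$ matrix $\fz_3-f(\fz_1,\fz_2)\in\scr{O}_3$ and Theorem \ref{main1}.
\item $\Delta=3$ comes from Mai--Speicher--Yin, Theorem \ref{main1} applied to the free semicircular pair, and the fullness of $P(\fz_1,\fz_2,f)$ over $\scr{O}_2$ for every full $P$ over $\C\langle \fz_1,\fz_2,\fz_3\rangle$ (Lemma \ref{lem:determinant_hypersurface}).
\end{itemize}
The gap is that this last fullness statement, which carries the entire content of $\Delta=3$, is never proved. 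Neither route you sketch can prove it.

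\textbf{The block-shift plan fails.} Your points $X_1=\sum_k E_{k,k+1}\otimes P_k$ and $X_2=\sum_k E_{k+1,k}\otimes Q_k$ are nilpotent. So your family of graph points, and hence its Zariski closure, lies in the closed set where $X_1$ is nilpotent. The full $1\times 1$ pencil $\fz_1$ is singular at every such point. Consequently that closure cannot contain triples $Y\oplus W$ with $Y$ arbitrary. Direct sums need invertibility on every summand, and compressions do not preserve invertibility, so neither repairs this.

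More fundamentally, with $N$ blocks one has $X_2^N=0$, so on such a family $f$ coincides with the fixed polynomial $p_N=\sum_{n<N}\frac{1}{n!}x^ny^n$. The whole family therefore lies in the zero set of the full element $\fz_3-p_N(\fz_1,\fz_2)$. Points at which the series truncates cannot detect the transcendence of $f$, which is exactly what is at stake.

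\textbf{The fallback is also too weak.} Infinite Hankel rank only says $f$ is not NC rational. Honesty requires that $f$ satisfy no matrix relation over the free field. For example, the Dyck series $h$ defined by $h=1+\fz_1h\fz_2h$ is not rational. Yet substituting it for $\fz_3$ kills the non-zero, hence full, polynomial $\fz_3-1-\fz_1\fz_3\fz_2\fz_3$.

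\textbf{The missing idea} is to work at diagonalizable points, where $f$ does not truncate and its exponential nature becomes visible. Take $X=\mathrm{diag}(\lambda_1,\dots,\lambda_m)$ and $Y=T\,\mathrm{diag}(\mu_1,\dots,\mu_m)\,T^{-1}$. Then
\[
f(X,Y)=\sum_{i,j=1}^m e^{\lambda_i\mu_j}\, e_ie_i^*Te_je_j^*T^{-1}.
\]
Two facts make this useful.
\begin{itemize}
\item For generic $T$, the $m^2$ matrices $e_ie_i^*Te_je_j^*T^{-1}$ form a basis of $\C^{m\times m}$ (Lemma \ref{lem:random_basis}).
\item The functions $e^{\lambda_i\mu_j}$ are algebraically independent over $\C(\lambda,\mu,T)$ by Narasimhan's theorem, because $1$ and the $\lambda_i\mu_j$ are linearly independent polynomials (Lemma \ref{lem:alg_indep}).
\end{itemize}
Hence any $\GL_m(\C)$-invariant polynomial vanishing on the graph of $f$ also vanishes with the third coordinate replaced by an arbitrary matrix. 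It therefore vanishes identically. Apply this to $(X,Y,Z)\mapsto\det P(X,Y,Z)$ at a size where it is not identically zero. This gives fullness of $P(\fz_1,\fz_2,f)$ directly, and no linearization is needed.

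\textbf{A smaller point on $\delta^*=2$ for $g$.} Do not appeal to invariance of $\delta^*$ under adjoining arbitrary elements of $W^*(\Xi_1,\Xi_2)$; this is not known. You need a result specific to elements such as $g(\Xi_1,\Xi_2)$, which the paper takes from Voiculescu.
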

Note that since free semi-circulars are self-adjoint, so is $g (\Xi _1, \Xi _2)$, and $\delta ^* \leq \Delta _1$ for any self-adjoint tuple by \cite[Lemma 1]{CharShly-analytic}.
To prove the theorem, we show that the triple, $(\Xi _1, \Xi _2, g(\Xi _1, \Xi _2))$ has maximal $\Delta$ (and hence obeys no algebraic relations), but it does satisfy the analytic relation, $g (\fz_1, \fz _2) - \fz _3 =0$, and hence cannot have maximal $\Delta _1$. To the best of our knowledge these are the first such examples in the literature. The question of whether $\delta^*(\Xi) = \Delta_1(\Xi)$ for a $d$-tuple $\Xi$ of self-adjoints remains open. 

\subsection{Reader's guide} Section \ref{sec:background} contains the necessary preliminaries on free analysis and realizations of NC functions.
In Section \ref{sec:odrreal}, we provide a construction of an operator realization for any function in $\scr{O}_d(r)$ for $0 < r < \infty$ with joint spectral radius of the pencil bounded by $\frac{1}{r}$.
This result extends the construction of realizations for NC entire functions in \cite{AMS-opreal}.
Section \ref{sec:pencils} contains a key lemma that relates kernels of NC functions and kernels of linear pencils with operator coefficients using Schur complements.
Section \ref{sec:sing_lin_pencils} deals with singularities of linear operator pencils, and the ``support" projections of vectors in tensor product spaces.
We also connect trace-class cycles appearing in the case of non-maximal $\Delta_1$ to the existence of singular linear operator pencils in Section \ref{sec:sing_lin_pencils} (compare with \cite{freefield}).
Section \ref{sec:application} contains the proof of the first main result, its corollaries, and related results.
Lastly, Section \ref{sec:inequality} contains the example of operators for which there is a strict inequality $\Delta_1 < \Delta$ and the technical results necessary for the proof.

\section{Background} \label{sec:background}

\subsection{Non-commutative functions}

The $d-$dimensional, $d \in \N$, complex non-commutative (NC) universe is the collection of all $d-$tuples of square, complex matrices of all finite sizes,
$$ \ncu = \bigsqcup_{n=1}^{\infty} \cdn; \quad \quad \cdn := \C ^{n \times n} \otimes \C ^{1\times d}. $$ Here, $\C^{n \times n}$, denotes the algebra of $n \times n$ matrices over $\C$. One can view $\ncu$ as the representation space of the free algebra, $\fpd$. As such, there are two natural operations on $\ncu$, the direct sum which allows us to connect the matrix strata, and joint similarity. Namely, if $X \in \cdn$ and $Y \in \cdm$, we set
$$ X \oplus Y = \left( \bpm X_1 & 0 \\ 0 & Y_1 \end{pmatrix}, \cdots, \begin{pmatrix} X_d & 0 \\ 0 & Y_d \epm \right) \in \C ^{((n+m) \times (n+m) ) \cdot d}. $$
On each level or stratum the group $\GL_n(\C)$ acts via conjugation by simultaneous similarity.
To be more precise, if $X \in \cdn$ and $S \in \GL_n(\C)$, we write $S X S^{-1} = (S X_1 S^{-1},\cdots,S X_d S^{-1})$.

A \emph{non-commutative set}, $\Om$, is any subset of the NC universe that is closed under direct sums. Defining $\Om _n = \Om \cap \cdn$, we have that $\Om = \bigsqcup \Om _n \subseteq \ncu$. A function $f : \Om \subseteq \ncu \rightarrow \C ^{\N \times \N}$, where $\Om$ is an NC set, is called an NC function, if it satisfies the following properties:
\begin{itemize}
    \item[(i)] $f$ is \emph{graded}, \emph{i.e.} $f( \Om _n) \subseteq \C^{n \times n}$,

    \item[(ii)] $f$ \emph{respects direct sums}, \emph{i.e.} $f(X \oplus Y) = f(X) \oplus f(Y)$, and,

    \item[(iii)] $f$ \emph{respects similarities}. Namely, for every $X \in \Om _n$ and $S \in \GL_n(\C)$, such that $S X S^{-1} \in \Om _n$, $f(S X S^{-1}) = S f(X) S^{-1}$.
\end{itemize}

Since we wish to consider analytic NC functions, we need to equip the NC universe with a topology. The most natural topology for our purposes is the \emph{uniform topology}, which is defined as the topology generated by the sub-base of NC row-balls centred at any matrix-point $Y \in \cdn$,
$$ r \cdot \B ^{(\N \times \N) \cdot d} (Y) := \bigsqcup _{m=1} ^\infty \left\{ \left. X \in \C ^{(mn \times mn ) \cdot d} \right| \  \| X - I_m \otimes Y \| _{\mr{row}} < r \right\}, $$ 
where for any $Z = (Z_1, \cdots, Z_d ) \in \cdn$, $\| Z \| _{\mr{row}} := \| Z \| _{\scr{B} (\C ^n \otimes \C ^d, \C ^n ) }$ \cite[Section 7.2]{KVV}. The row-balls of various radii centred at the origin of the NC universe, $0 = (0, \cdots, 0 ) \in \C ^{1 \times d}$, $r \cdot \rball := r \cdot \rball (0)$, as described in the introduction, will be the only non-commutative domains that we will consider in this paper. 
The study of NC functions was initiated independently by Taylor \cite{Taylor2, Taylor} for the purpose of contructing a non-commutative holomorphic functional calculus for operator tuples, and by Voiculescu \cite{Voic,Voic2} for applications in his free probability theory. A related notion of NC function appeared in the ``NC Gelfand theory" of Takesaki for $C^*-$algebras \cite{Takesaki}. An elementary, but surprising fact is that local boundedness in the (say) uniform topology implies that an NC function is both holomorphic and analytic. More precisely, any NC function defined on a level-wise open (in matrix-norm) NC domain is holomorphic in the co-ordinates if and only if it is locally bounded in matrix-norm \cite{AgMcC-global,KVV}. Moreover, any such NC function, $f$, defined on a uniformly open NC domain that is locally bounded in the uniform topology has a convergent Taylor-type power series expansion about any point in its NC domain with positive radius of convergence \cite[Theorem 8.11]{KVV}. For this reason we will refer to such functions as uniformly analytic NC functions.  

In his study of the free semigroup algebra and NC Hardy space theory, Popescu introduced a radius of convergence for formal power series. We write $\fps = \fpsd$ for the ring of formal power in the $d$ non-commuting variables $\fz = (\fz _1, \cdots, \fz_d)$. An element, $f \in \fps$ can be expressed as,
\be f = \sum_{\om \in \F} \hat{f}_{\om} \, \fz ^{\om}; \quad \quad \hat{f} _\om \in \C, \label{fpseq} \ee where $\om =w_1 \cdots w_n$ is any word consisting of letters, $w_j$, chosen from the alphabet, $\{1,\cdots,d\}$, $\F$ denotes the \emph{free monoid}, the collection of all such words, and  $\fz ^\om$ denotes the \emph{free monoimial},  $\fz_{w_1} \fz_{w_2} \cdots \fz_{w_n}$. If $\om$ is as in the preceding sentence, we also write $|\om | =n$ for the \emph{length} of $\om$, \emph{i.e.} the number of letters it contains. Given a free FPS, $f$, as above in Equation (\ref{fpseq}), Popescu defines its \emph{radius of convergence} as,
$$ \frac{1}{R_f} = \limsup_{n\to \infty} \sqrt[2n]{\sum_{|\om| = n} | \hat{f} _{\om}|^2}. $$
(We apply the usual convention that $1/0 = \infty$ and $1/\infty = 0$.) Clearly, every free FPS with $R_f >0$ defines a uniformly locally bounded, hence uniformly analytic NC function in $R_f \cdot \rball$ that is, moreover, uniformly bounded on every sub-ball, $r \cdot \rball$ for $r < R_f$.
\begin{defn}
    Given any $0 < r \leq +\infty$, let 
    \[
    \scr{O}_d(r) = \left\{ f \in \fpsd \mid R_f \geq r\right\}.
    \]
\end{defn}
We will write $\scr{O}_d = \scr{O}_d(\infty)$ and call the elements of this ring NC entire functions. These rings were studied by the first three authors in \cite{AMS-ncentire}. In particular, it is proved that for $0 < r \leq +\infty$, each ring $\cO_d(r)$ is a semi-free ideal ring \cite[Theorem 4.3]{AMS-ncentire}, or \emph{semifir}, for short, in the sense of P. M. Cohn \cite{Cohn}. Here, a semifir is defined as a ring such that every finitely generated right (equivalently, left) ideal is free as a right (\emph{resp.} left) module. The algebras $\cO_d(r)$ are topological algebras with the natural topology of uniform convergence on sub-balls. In this case, not only are the finitely--generated right ideals free, but they are also automatically closed \cite[Theorem 4.5]{AMS-ncentire}. One of the main results of Cohn's non-commutative localization theory is that every semifir admits a universal skew field of fractions. To define these universal localizations, we will need the following notion of inner rank that will be used throughout the paper.

\begin{defn}
    Let $\scr{R}$ be a ring and $A \in \scr{R}^{n \times n}$ a matrix over $\scr{R}$. 
    The \emph{inner rank}, $\rho (A)$, of $A$, is the smallest integer, $\rho \geq 0$, such that there exists a factorization $A = BC$, where $B \in \scr{R}^{n \times \rho}$ and $C \in \scr{R}^{\rho \times n}$. 
    A matrix $A \in \scr{R}^{n\times n}$ is called \emph{full}, if its inner rank is $\rho(A) =n$.
\end{defn}

The universal skew field of fractions of a semifir $\scr{R}$ is a skew field, $\scr{U}$, along with an embedding $\scr{R} \hookrightarrow \scr{U}$, such that the image of $\scr{R}$ generates $\scr{U}$ as a skew field, and every full matrix over $\scr{R}$ is invertible over $\scr{U}$. We will denote by $\scr{M}_d(r)$ the universal skew field of fractions of $\cO_d(r)$. (We will again simplify $\scr{M}_d = \scr{M}_d(\infty)$.) By \cite[Remark 5.10]{AMS-ncentire}, these fields consist of rational expressions in elements of $\cO_d(r)$ under the evaluation equivalence relation analogous to \cite{KVV-diff, KVV-local}. By \cite[Lemma 5.8]{AMS-ncentire}, a matrix $A$ over $\cO_d(r)$ is full if and only if there exists $X \in r \cdot \rball$, such that $A(X)$ is invertible. Moreover, if $A$ is full then for $n \gg 0$, $\det A(X) = 0$ is a proper analytic subvariety of $r  \cdot \bdn$.

\subsection{Realizations of NC functions} \label{ss:realize}

We refer the reader to \cite{HMS-realize,AMS-opreal} for an introduction to realization theory, and to \cite{PV1,PV2,KMT-matrixreal} for an introduction to realizations about a matrix-centre.

A descriptor \emph{realization} is a triple $(A,b,c) \in \scr{B} (\cH) ^d \times \cH \times \cH$, where $A = \bsm A_1 \\ \vds \\ A_d \esm$ is a (column) $d-$tuple of bounded linear operators, $A_j$, on a complex, separable or finite--dimensional Hilbert space, $\cH$. We view $A$ as a column $d-$tuple and define the \emph{column norm} of $A$ as
\[
    \colnorm{A} := \left\| \bpm A_1 \\ \vdots \\ A _d \epm \right\|_{\scr{B} (\cH, \cH \otimes \C ^d)}.
\]
The \emph{linear pencil} of $A$ is a monic, affine-linear function on the NC universe defined by 
$$ L_A (X) := I_\cH \otimes I_m - \underbrace{\sum _{j=1} ^d A_j \otimes X_j }_{=:A \otimes X};\quad \quad X \in \cdm. $$ 
Let $\scr{D} (A)$ be the \emph{invertibility domain} of $A$, 
$$ \scr{D} (A) := \{ X \in \ncu | \ L_A (X) ^{-1} \ \exists \}. $$ 
It is readily checked that $\scr{D} (A)$ is a uniformly open NC set and that the realization, $(A,b,c)$, defines a uniformly analytic NC function on $\scr{D} (A)$, the \emph{NC transfer function} of the realization, via the formula,
$$ f (X) := (b^*  \otimes I_n)  L_A (X) ^{-1} (c \otimes I_n); \quad \quad X \in \scr{D} _n (A). $$
We will write $f \sim (A,b,c)$ to denote that $f$ is the NC transfer function of the realization $(A,b,c)$. It is clear that 
$$ \| A \otimes X \|_{\scr{B} (\C ^m \otimes \cH)} 
    = \left\| (I_\cH \otimes X_1, \cdots, I_\cH \otimes X_d) \bpm A_1 \otimes I_m \\ \vdots \\ A_d \otimes I_m \epm \right\|
    \leq \rownorm{X} \cdot \colnorm{A}, $$ 
so that $\| A \otimes X \| <1$ for any $X \in \colnorm{ A } ^{-1} \cdot \rball$. 
This shows that $\scr{D} (A) \supseteq r \cdot \rball$ where $r=\| A \| _{\mr{col}} ^{-1}$ so that any NC transfer function of a realization, $f \sim (A,b,c)$ is uniformly analytic in a uniformly open neighbourhood of $0 \in \ncu$. Conversely, by \cite[Lemma 3.2]{AMS-opreal}, any NC function, $f$, that is uniformly analytic in a uniformly open neighbourhood of $0$ is the NC transfer function of a realization, $(A,b,c)$. 
If $f \sim (A,b,c)$ and $X \in \rownorm{ A }^{-1} \cdot \bdn$ so that $X \in \scr{D} _n (A)$ and $\| X \otimes A \| <1$, $L_A (X) ^{-1}$ can be expanded as an operator-norm convergent geometric series to yield
\ba f(X) & = & b^* \otimes I_n L_A (X) ^{-1} c \otimes I_n \\
& = &  b^* \otimes I_n \sum _{\ell =0} ^\infty (A \otimes X) ^n c \otimes I_n \\ 
& = & \sum _{\om \in \F} b^* A^\om c \, X^\om. \ea 
That is, $f$ can be viewed as a free FPS with coefficients given by the \emph{moments
of the realization}, $\hat{f} _\om = b^* A^\om c$. In this sense, the realization $(A,b,c)$, of $f$, gives a power series expansion of $f$ that is ``centred at $0$". If $f \sim (A,b,c)$ and $Y \in \scr{D} _m (A)$, then the realization of $f$ can be ``translated" to the point $Y$, to obtain a representation of $f$ as a certain power series consisting of multi-linear maps that is ``centred at $Y$". (This will give the \emph{Taylor--Taylor series} expansion of $f$ about $Y$. Since we will not apply the theory of Taylor--Taylor series, we will not discuss any further details \cite{KVV,KMT-matrixreal}.) Namely, since $Y \in \scr{D} _m (A)$, this means that the linear pencil, $L_A (Y)$, is invertible. Hence, for any $X \in \C ^{(mn \times mn) \cdot d}$,
\ba L_A (X) &= & I_\cH \otimes I_{mn}   - A \otimes X \\
& = &  I_\cH \otimes I_{mn} - A \otimes Y\otimes I_n - A \otimes (X -Y\otimes I_n)  \\
& = & L_A (Y) \otimes I_n \left( I_\cH \otimes I_{mn} - L_A (Y) ^{-1}\otimes I_n \, A \otimes (X- Y \otimes I_n) \right). \ea 
This yields the representation of $f$,
\ba f(X) & = & b^* \otimes I_{mn} \left( I_\cH \otimes I_{mn} - A \otimes X \right) ^{-1} c \otimes I_{mn} \\
& = & b^* \otimes I_{mn}  \left( I_\cH \otimes I_{mn} - L_A (Y) ^{-1} \otimes I_n \, A\otimes  (X- Y \otimes I_n)  \right) ^{-1} L_A (Y) ^{-1} \otimes I_n \, c \otimes  I_{mn}, \ea for any $X \in \scr{D} _{mn} (A)$. If we define the linear maps, $\hat{A} _j : \C ^{m\times m} \rightarrow \scr{B} (\cH \otimes \C ^m)$ and $\hat{b},\hat{c} \in \scr{B} (\C ^m, \cH \otimes \C ^m)$ by 
$$ \hat{A} _j (G) := L_A (Y) ^{-1} A_j \otimes G, \quad \hat{b} (v) := b \otimes v, \quad \mbox{and} \quad \hat{c}(v) := L_A (Y) ^{-1} c \otimes v; \quad \quad G \in \C ^{m\times m}, \ v \in \C ^m, $$
then the above formula for $f (X)$ can be written as 
$$ f(X) = \hat{b} ^* \otimes I_n \, L_{\hat{A}} (X- Y \otimes I_n) ^{-1} \, \hat{c} \otimes I_n, $$ 
where the linear pencil of the row $d-$tuple, $\hat{A} := (\hat{A} _1, \cdots, \hat{A} _d)$ is defined by: 
$$ L_{\hat{A}} (X) := I_{\cH} \otimes I_{mn} - \sum _{j=1} ^d \hat{A} _j \otimes \mr{id} _n \, (X_j - Y_j \otimes I_n), $$ and where $\mr{id} _n : \C ^{n \times n} \rightarrow \C ^{n \times n}$ is the identity map on $n \times n$ matrices. The triple, $(\hat{A}, \hat{b}, \hat{c})_Y$ is called a (descriptor) \emph{matrix-centre realization of $f$ about $Y$}. 

More generally, a descriptor \emph{matrix-centre realization centred at $Y \in \cdm$}, is defined as a triple,
$(A,b,c)_Y$ where, $A := \bsm A_1 \\ \vdots \\ A_d \esm$, is a column $d-$tuple of linear maps, $A_j : \C ^{m\times m} \rightarrow \scr{B} (\cH)$, $b,c \in \scr{B} (\C^m , \cH)$ and $\cH$ is a complex, separable or finite--dimensional Hilbert space. The \emph{linear pencil} of $A$ is a monic, affine-linear function on the $d-$dimensional NC universe over $\C ^{m \times m}$, 
$$ \C ^{(m\N  \times m\N ) \cdot d} := \bigsqcup _{n=1} ^\infty \C ^{(mn \times mn)\cdot d}, $$ defined by
$$ L_A (X) := I_\cH \otimes I_{mn} - \sum _{j=1} ^d ( A_j \otimes \mr{id} _n ) (X); \quad \quad X \in \C ^{(mn \times mn)\cdot d}. $$ 

A priori, $(A,b,c)_Y$ has no dependence on the point $Y \in \cdm$. However, when we say that the matrix-centre realization $(A,b,c)_Y$ is centred at $Y$, we mean that we are considering the $Y-$invertibility domain of $A$, $\scr{D} ^Y(A) \subseteq \C ^{(m\N \times m\N )\cdot d}$, 
$$ \scr{D} ^Y (A) 
    := \bigsqcup _{n=1} ^\infty \scr{D} ^Y _n (A), \quad \quad 
    \scr{D} ^Y _n (A) := \left\{ \left. X \in \C ^{(m n \times m n)\cdot d} \right| 
    \ L_A (X -Y \otimes I_n) ^{-1} \, \text{ exists } \right\}, $$
and the corresponding \emph{quantized transfer function}, $f \sim _Y (A,b,c)$, defined on $\scr{D} ^Y (A)$ via the formula
$$ f (X) 
    = b^* \otimes I_n L_A (X - Y \otimes I_n ) ^{-1} c \otimes I_n; \quad \quad X \in \scr{D} ^Y _n (A). $$
As before, it is readily checked that $\scr{D} ^Y (A)$ is a uniformly open NC set, and that $r \cdot \B ^d _{m \N} (Y) \subseteq \scr{D} ^Y (A)$, for $r := \| A \| _{\mr{col}} ^{-1}$, where $ \| A \| _{\mr{col}}$ denotes the operator norm of the linear map, 
$A = \bsm A_1 \\ \vds \\ A_d \esm : \cdm \rightarrow \scr{B} (\cH)$, defined by 
\[
    A (X) = \bsm A_1 \\ \vds \\ A_d \esm \circ ( X_1, \cdots, X_d) := \sum _{j=1} ^d A_j (X_j); \quad \quad X_j \in \C ^{m \times m}.
\]
\begin{remark}
Any linear map $L: \C ^{m \times m} \rightarrow \scr{B} (\cH)$ is bounded, hence by \cite[Theorem 18]{Paulsen-cb} it is completely bounded by $m \sqrt{m} \| L \| _{\scr{B} (\C ^{m \times m}, \scr{B} (\cH))}$. It follows that we could instead consider the completely bounded norm of the linear map $A : \cdm \rightarrow \scr{B} (\cH)$, but this will not be needed for our investigations here.
\end{remark}

Given an arbitrary matrix-centre realization, $(A,b,c)_Y$, centred at a non-scalar point, $Y \in \cdm$, $m>1$ its quantized transfer function, $f \sim _Y (A,b,c)$, need not be an NC function. 
The $Y-$invertibility domain, $\scr{D} ^Y (A)$, is always a uniformly open NC set, and $f : \scr{D} ^Y (A) \rightarrow \C ^{\N \times \N}$ is graded and respects direct sums, by construction, but it does not necessarily preserve joint similarities. See \cite[Lemma 2.2, Theorem 2.6]{PV1} and \cite[Theorem 6.1]{KMT-matrixreal} for a full characterization of when the quantized transfer function of an arbitrary matrix-centre realization, $f \sim _Y (A,b,c)$, respects joint similarities and is hence an NC function. If $f \sim _Y (A,b,c)$ is an NC function, then it is necessarily uniformly locally bounded on the uniformly open NC set, $\scr{D} ^Y (A)$, and is hence uniformly analytic.  

We will also need to consider matrix-centre realizations of square matrices over $\scr{O} _d (r)$, $r>0$. Let $\Om \subseteq \ncu$ be a uniformly open NC set and let $\scr{O} _d (\Om )$ denote the ring of uniformly analytic NC functions in $\Om$. If $F \in \scr{O} _d (\Om ) ^{\ell \times \ell}$ and $Y \in \Om _m$, then each entry, $f_{i,j}$, of $F$ has a matrix-centre realization at $Y$, $f_{i,j} \sim _Y (A^{(i,j)}, b^{(i,j)}, c^{(i,j)})$ by \cite[Corollary 8.11]{KMT-matrixreal}, where we can assume that each $A^{(i,j)} \in \scr{B} (\cH) ^d$ for $1 \leq i,j \leq \ell$. A matrix-centre realization for $F$ at $Y \in \Om _m$, $F \sim _Y (\hat{A},\hat{b}, \hat{c})$ can then be obtained by setting: $\hat{A} = \bigoplus _{i,j=1} ^\ell A^{(i,j)}$, \emph{i.e.} \footnotesize
\ba \hat{A}_k (G)  & := & \bsm A_k ^{(1,1)} (G) & & & & & & & & & & & \\ & A_k ^{(1,2)} (G) & & & & & & & &  & & \\
& & \ddots & & & & & & & & & \\ & & & A_k ^{(1,\ell)} (G) & & & & & & & & \\ & & & & A_k ^{(2,1)} (G) & & & & & & & \\ & & & & & \ddots & & & & & \\ & & & & & & A_k ^{(2,\ell)} (G) & & & & & \\
& & & & & & & \ddots  & & & & \\
 & & & & & & & & \ \ \ddots   & & & \\
& & & & & & & & & A_k ^{(\ell, 1)} (G) & & \\
& & & & & & & & & & \ddots  & \\ & & & & & & & & & & & A_k ^{(\ell, \ell)} (G) \esm, \quad \hat{b}  :=  \bsm b ^{(1,1)}  & & \\ & \ddots & \\ & & b ^{(1,\ell)}  \\ b ^{(2,1)} & & \\ & \ddots & \\ & & b ^{(2,\ell)} \\ & & & \\ \vdots & \vdots & \vdots & \\ & & & \\ b ^{(\ell ,1 )} & & \\ & \ddots & \\ & & b ^{(\ell,\ell)}  \esm, \ea \normalsize and similarly for $\hat{c}$.

Finally, it will be convenient to consider Fornasini--Marchesini (FM) realizations of matrices over $\scr{O} _d (r)$. As described in \cite[Section 7]{KMT-matrixreal}, if $f \sim _Y (A,b,c)$ is a matrix-centre realization of the quantized transfer function, $f$, $Y \in \cdm$, then $f$ has the FM realization at $Y$, $f \sim _Y (A',B',C',D')$, defined as follows,
\[
    A' _j (G) := A_j , \quad B' _j (G) := A_j (G) c, 
    \quad C' = b^*, \quad \mbox{and} \quad D' := f(Y),
\]
$1 \leq j \leq d$, where $G \in \C ^{m \times m}$.
The quantized transfer function, $f \sim _Y (A', B', C', D')$, is then also given by the FM realization formula, 
\be \label{FMreal} f(X) = D' \otimes I_n + C' \otimes I_n L_{A'} (X - Y \otimes I_n) ^{-1} B' (X - Y \otimes I_n), \quad \quad X \in \scr{D} ^Y _n (A'). \ee Generally, an FM realization about the matrix-centre, $Y \in \cdm$, is a quadruple, $(A,B,C,D) _Y$, where $A_j : \C ^{m \times m} \rightarrow \scr{B} (\cH)$ is a linear map, $B_j : \C ^{m \times m} \rightarrow \cH$ is linear, $1 \leq j \leq d$, $C \in \scr{B} (\cH, \C ^m)$, and $D \in \C ^{m \times m}$. One defines the $Y-$invertibility domain of $A$, $\scr{D} ^Y (A)$, as above, and the realization $(A,B,C,D) _Y$ defines a quantized transfer function on the uniformly open NC set, $\scr{D} ^Y (A)$ as in Equation (\ref{FMreal}) above.

Hence, if $F = ( f_{i,j} ) \in \scr{O} (\Om ) ^{\ell \times \ell}$ where each entry has an FM matrix-cenre realization about $Y \in \Om _m$, $f_{i,j} \sim _Y (A^{(i,j)}, B^{(i,j)}, C^{(i,j)}, D^{(i,j)})$, where $\scr{D} ^Y (A^{(i,j)} ) \supseteq r_{i,j} \cdot \rball (Y)$ for some $r_{i,j} >0$, and we can assume that $A^{(i,j)} _k : \C ^{m\times m} \rightarrow \scr{B} (\cH)$, $B^{(i,j)} _k : \C ^{m\times m} \rightarrow  \cH$, $C ^{(i,j)}: \cH \rightarrow \C ^m$, and $D ^{(i,j)} = f_{i,j} (Y) \in \C ^{m\times m}$ for each $1\leq k\leq d$ and each $1 \leq i,j \leq \ell$. (That is, we assume all $\ell^2$ realizations are defined on the same Hilbert space, $\cH$.) 
An FM matrix-centre realization for $F \in \scr{O} _d (\Om) ^{\ell \times \ell}$ about $Y$ is then  $F \sim _Y (A,B,C,D) $, where $A_k : \C ^{m\times m} \rightarrow \scr{B} (\cH) \otimes \C ^{\ell ^2 \times \ell ^2}$, $B_k : \C ^{m\times m} \rightarrow \cH \otimes \C ^{\ell ^2 \times \ell}$, $C \in \scr{B} (\cH, \C ^m) \otimes \C ^{\ell \times \ell ^2}$ and $D \in \C ^{\ell \times \ell} \otimes \C ^{m \times m}$ are defined by $A _k := \bigoplus _{i,j =1} ^\ell A_k ^{(i,j)}$, \small
\ba
 B_k (G) & := & \bsm B_k ^{(1,1)} (G) & & \\ & \ddots & \\ & & B_k ^{(1,\ell)} (G) \\ B_k ^{(2,1)} (G) & & \\ & \ddots & \\ & & B_k ^{(2,\ell)} (G) \\ & & & \\ \vdots & \vdots & \vdots & \\ & & & \\ B_k ^{(\ell ,1 )} (G) & & \\ & \ddots & \\ & & B_k ^{(\ell,\ell)} (G) \esm, 
 \quad \quad C := \bsm C ^{(1,1)} & \cdots & C^{(1,\ell)} & & & & & & & \\
& & & C ^{(2,1)} & \cdots & C^{(2,\ell)} & & & & \\
& & &  &  & & \ddots & & & \\
& & & & & & & C ^{(\ell,1)} & \cdots & C^{(\ell,\ell)}  \esm, \ea \normalsize $G \in \C ^{m \times m}$, and $D := F (Y) \in \C ^{m\times m} \otimes \C ^{\ell \times \ell}$. By construction, $\scr{D} ^Y (A) \supseteq r \cdot \rball (Y)$, where $r := \min _{1\leq i,j \leq \ell}  r _{i,j}$.  

If $F$ is an $\ell \times \ell$ matrix of quantized transfer functions, and $F \sim _Y (A,B,C,D)$ is invertible at $Y \in \cdm$, \emph{i.e.} $D=F(Y) \in \C ^{\ell m \times \ell m}$ is invertible, then $F^{-1} \sim _Y (A^{(-1)}, B^{(-1)}, C^{(-1)}, D^{-1} )$  also has a realization centred at $Y$ with 
$$ A^{(-1)} _j (G) := A_j (G) - B_j (G) D^{-1} C, \quad \quad B_j ^{(-1)} (G) := -B_j (G) D^{-1}; \quad \quad G \in \C ^{m \times m}, $$ $C^{(-1)} = D^{-1} C$, and $D^{-1} = F(Y) ^{-1}$, see, for example, \cite[Section 7.1]{KMT-matrixreal}.

\section{Realizations for $\scr{O} _d (r)$} \label{sec:odrreal}

From a certain perspective, the invertibility of $I - A\otimes X$ should depend on the ``norm" of $A$ in some quantity.
More specifically, in the single variable setting, it is readily seen that $I - A\otimes X$ is invertible in a ball of radius $1/r_\sigma(A)$, where $r_\sigma(A)$ is the spectral radius of $A$. This rather intuitive idea extends to the non-commutative setting.

\begin{defn}
Given $A = \bsm A_1 \\ \vds \\ A_d \esm \in \scr{B}(\cH)^d$, let $\mr{Ad}_{A^*,A}:\scr{B}(\cH) \to \scr{B}(\cH)$ denote the linear map of \emph{adjunction} by $A^*$ and $A$, defined by $\mr{Ad}_{A^* ,A}(X) = \sum_i A_i ^* X A_i$.

The (column) \emph{joint spectral radius} of $A$, $r_\sigma(A)$, is 
\[ r_{\sigma}(A) = \lim_{k\to\infty} \norm{\mr{Ad}_{A^*,A}^{\circ k}(I_{\cH})}^{\frac{1}{2k}}
    \]
where $\mr{Ad}_{A^*,A}^{\circ k}$ denotes the composition of $\mr{Ad}_{A^*,A}$ with itself $k$ times.   
\end{defn}

The limit in the above definition exists since $\norm{\cdot}$ is sub-multiplicative. 
The peculiar choice of notation for $\mr{Ad}_{A^*,A}$ is because, more generally, we can define a similar map $\mr{Ad}_{A,C}$ for tuples $A \in \scr{B}(\cH)^d$ and $C \in \scr{B}(\cK)^d$.

\begin{prop} \label{odrreal}
Any $f \in \fpsd$ belongs to $\scr{O} _d (r)$ if and only if it has a realization, $f \sim (A,b,c) \in \scr{B} (\cH) ^d \times \cH \times \cH$, with $r_\sigma (A) \leq \frac{1}{r}$.
\end{prop}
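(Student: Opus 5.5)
We prove the two implications separately. For ($\Leftarrow$) we bound the Taylor coefficients directly by the adjunction powers. For ($\Rightarrow$) we build a backward-shift realization on a weighted Fock space whose weights are adapted to the coefficients of $f$.

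\emph{($\Leftarrow$).} Suppose $f \sim (A,b,c)$ with $r_\sigma(A) \leq \frac{1}{r}$. As computed in Subsection \ref{ss:realize}, $\hat f_\om = b^* A^\om c$. By Cauchy--Schwarz,
\[
\sum_{|\om|=n} |\hat f_\om|^2 \leq \|b\|^2 \sum_{|\om|=n} \|A^\om c\|^2 = \|b\|^2 \Big\langle \sum_{|\om|=n} (A^\om)^*A^\om c, c\Big\rangle .
\]
Expanding $\mr{Ad}^{\circ n}_{A^*,A}(I_\cH)$ gives exactly $\sum_{|\om|=n}(A^{\om})^*A^{\om}$, up to reversal of words, which is a bijection of the set of words of length $n$. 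Hence
\[
\sum_{|\om|=n}|\hat f_\om|^2 \leq \|b\|^2\|c\|^2 \, \| \mr{Ad}^{\circ n}_{A^*,A}(I_\cH)\| .
\]
Taking $2n$-th roots and the $\limsup$ gives $1/R_f \leq r_\sigma(A) \leq 1/r$, so $f \in \scr{O}_d(r)$.

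\emph{($\Rightarrow$).} Let $f \in \scr{O}_d(r)$ and set $s_n := \sum_{|\om|=n} |\hat f_\om|^2$. Let $\cH_w$ be the Hilbert space of formal power series $g$ with
\[
\|g\|_w^2 := \sum_n w_n \sum_{|\om|=n} |\hat g_\om|^2 < \infty ,
\]
for positive weights $w_n$ to be chosen. Define $A_j$ to be the backward shift deleting a leading letter $j$: $\widehat{(A_j g)}_\om = \hat g_{j\om}$. Set $c := f$ and let $b$ be the vector representing the functional $g \mapsto \hat g_\emptyset$. Then $b^*A^\om c = \hat f_\om$, after fixing the word-ordering convention, so $f \sim (A,b,c)$ once $f \in \cH_w$ and each $A_j$ is bounded.

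A direct computation gives
\[
\Big\langle \sum_{|\om|=k}(A^\om)^*A^\om g, g\Big\rangle = \sum_n w_n \sum_{|v|=n+k} |\hat g_v|^2 \leq \Big(\sup_n \frac{w_n}{w_{n+k}}\Big) \|g\|_w^2 .
\]
So it suffices to choose $w$ with two properties. First, $\sum_n w_n s_n < \infty$. Second, $\big(\sup_n w_n/w_{n+k}\big)^{1/2k} \to 1/r$.

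When $r = \infty$ or $R_f > r$, geometric weights essentially suffice, e.g. $w_n = \rho^{2n}$ with $r < \rho < R_f$, adjusted if $R_f = \infty$. The delicate case is $R_f = r < \infty$, and I expect the choice of weights here to be the main obstacle.

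In that case put $t_m := \max\big(0, \log(s_m r^{2m})\big)$. Since $\limsup_m s_m^{1/2m} \leq 1/r$, we have $t_m = o(m)$. Let $\varphi$ be the least concave majorant of $(t_m)$ on $[0,\infty)$. The least concave majorant of a nonnegative $o(m)$ sequence is again $o(m)$. Moreover, concavity gives $\varphi(n+k) - \varphi(n) \leq \varphi(k) - \varphi(0)$. Now set
\[
w_n := r^{2n} e^{-\varphi(n)} (n+1)^{-2} .
\]
Then $w_n s_n \leq (n+1)^{-2}$, which is summable. Also
\[
\frac{w_n}{w_{n+k}} \leq r^{-2k} e^{\varphi(k)} (k+1)^2 ,
\]
and its $2k$-th root tends to $1/r$ because $\varphi(k) = o(k)$. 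This gives $r_\sigma(A) \leq \frac{1}{r}$; the case $k = 1$ of the same bound shows that each $A_j$ is bounded, which completes the construction.
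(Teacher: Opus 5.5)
Your proof of ($\Leftarrow$) is the same Cauchy--Schwarz estimate the paper uses. For ($\Rightarrow$) you take a genuinely different route.

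The paper splits $f=\sum_n f_n$ into homogeneous parts. It realizes each $f_n$ on the finite-dimensional controllable subspace $\scr{C}_n$ of the backward shift, so the $n$-th block is jointly nilpotent of order $n+1$. It rescales that block by $\sqrt[n]{n^2\|f_n\|_{\hardy}}$, which makes $\|b_n\|=\|c_n\|=1/n$ while leaving the moments unchanged, and takes the direct sum. By nilpotency, $\mr{Ad}^{\circ k}_{A^*,A}(I)$ only sees blocks of index $\ge k$. This gives $r_\sigma(A)\le\limsup_n\sqrt[n]{n^2\|f_n\|_{\hardy}}=1/R_f$ in one stroke, for every $r\in(0,\infty]$, with no case analysis.

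You instead keep a single backward shift on a weighted Fock space with $c=f$, and put all the work into the weights. Your details are right for $0<r<\infty$:
\begin{itemize}
\item $\varphi$ exists and is $o(m)$, because the lines $\epsilon x+C_\epsilon$ are concave majorants;
\item the increment inequality is just concavity, and $w_ns_n\le(n+1)^{-2}$;
\item $w_n/w_{n+k}\le r^{-2k}e^{\varphi(k)}(k+1)^2$, using $\varphi(0)\ge 0$.
\end{itemize}
In fact this construction works whenever $R_f\ge r$, so your separate geometric case is superfluous. Your route produces a canonical cyclic model realization on a space of NC power series, at the price of a regularization lemma. The paper's route gets the regularization for free from nilpotency, and yields the optimal bound $r_\sigma(A)\le 1/R_f$ directly (hence equality, by ($\Leftarrow$)).

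\textbf{The gap is $r=\infty$}, the case of NC entire functions, which you wave through with geometric weights. There the statement demands $r_\sigma(A)=0$. For $w_n=\rho^{2n}$, your own bound $\langle\sum_{|\om|=k}(A^\om)^*A^\om g,g\rangle\le(\sup_n w_n/w_{n+k})\|g\|_w^2$ is an equality for $g$ supported in degrees $\ge k$. Hence $r_\sigma(A)=1/\rho>0$ for every $\rho$, and in any case there is no $\rho>r$ to choose. You need super-geometric weights, and the mirror image of your trick supplies them:
\begin{itemize}
\item Put $u_n:=\min(-\log s_n,n^2)$; then $u_n/n\to\infty$ since $R_f=\infty$.
\item Let $\psi$ be the greatest convex minorant of $(u_n)$, and set $w_n:=e^{\psi(n)}(n+1)^{-2}$. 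Then $w_ns_n\le(n+1)^{-2}$.
\item $\psi(n)/n\to\infty$, because each line $Mn-C_M$ with $C_M:=\sup_n(Mn-u_n)<\infty$ lies below $(u_n)$.
\item Convexity gives $\psi(n+k)-\psi(n)\ge\psi(k)-\psi(0)$. Hence $(\sup_n w_n/w_{n+k})^{1/2k}\le e^{-(\psi(k)-\psi(0))/2k}(k+1)^{1/k}\to 0$, and $k=1$ gives boundedness of $A$.
\end{itemize}
The paper's reader's guide advertises this proposition for $0<r<\infty$, the entire case having been treated in earlier work. However, the statement as written and the paper's proof both include $r=\infty$.
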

\begin{proof}
First, assume that $f \sim (A,b,c)$ with $r_\sigma (A) \leq r^{-1}$. Then, 
\ba \frac{1}{R_f} & = & \lim _{n\rightarrow \infty} \sqrt[2n]{\sum _{|\om| = n} c^* A^{\om ^*} b \, b^* A^\om c} \\
& \leq & \lim_{n\to\infty} \sqrt[2n]{\| \mr{Ad} _{A^*,A} ^{\circ n} (I) \|} \ \| b \| ^{\frac{1}{n}} \|c \| ^{\frac{1}{n}} \\
& = & r_\sigma (A) \leq \frac{1}{r}, \ea so that $r \leq R_f$.  

Conversely, consider 
$$ f (\fz ) = \sum _{\om \in \F} \hat{f} _\om \fz ^\om = \sum _{n=0} ^\infty f _n (\fz ). $$  where each $f_n \in \fp$ is a homogeneous free polynomial of degree $n$. Hence, each $f_n$ has the backward free shift realization, $f_n ^\mrt \sim (L^*, 1, f _n ^\mrt)$, where $L = (L_1, \cdots, L _d )$ is the $d-$tuple of left free shifts on the full Fock space or free Hardy space, $\hardy$, (see \cite[Subsection 2.3, Lemma 3.2]{AMS-opreal}) and $f^\mrt$ denotes the \emph{transpose} of any free FPS, defined by 
$$ f ^\mrt (\fz ) = \sum _{\om \in \F} \hat{f} _{\om ^\mrt} \fz ^\om = \sum _{n=0} ^\infty f ^\mrt _n (\fz ), $$ where for any word $\om = w_1 \cdots w_m \in \F$, $\om ^\mrt := w_m \cdots w_1$. Hence, each $f_n ^\mrt$ is also a homogeneous free polynomial of degree $n$ (unless it is identically $0$). Let $P_n$ be the projection onto the (finite--dimensional) controllable subspace, 
$$ \scr{C} _n := \bigvee _{\om \in \F} L^{*\om} f_n ^\mrt. $$ 
Hence, each $f_n$ has the finite--dimensional realization $f _n  \sim (A^{'(n)}, b_n', c_n') := (L^* | _{\scr{C} _n}, 1, f_n ^\mrt)$, obtained by compressing $(L^*, 1, f_n ^\mrt)$ to $\scr{C} _n$. (Since $f_n ^\mrt$ is a polynomial, note that $1 \in \scr{C} _n$.) This realization is controllable, \emph{i.e.} $f_n ^\mrt$ generates $\scr{C} _n$ as an $L^*-$cyclic vector, but it is also minimal in the sense that $1$ is cyclic for the adjoints of $L^* | _{\scr{C} _n}$. Indeed, the observable subspace is defined by 
$$ \bigvee (P_n L P_n)^\om P_n 1 = P_n \bigvee L^\om 1 = \scr{C} _n. $$  It follows that $(A^{'(n)}, b_n', c_n')$ is a finite, minimal realization of a homogeneous free polynomial, and hence it is jointly nilpotent of order $n+1$, \emph{i.e.} $A^{'(n) \om} \equiv 0$ for any $|\om | \geq n +1$.

Consider the similar realization, $f_n  \sim (A ^{(n)}, b_n, c _n)$, defined by $A^{(0)} =0$, and for any $n \in \N$,
$$ A ^{(n)} := \sqrt[n]{n^2} \sqrt[n]{\| f_n  \| _{\hardy}} A^{'(n)}, $$ 
$$ b _0 := 1, \quad b _n := \frac{1}{n} b'_n, $$ 
$$ c _0 := \hat{f} _\emptyset \quad \mbox{and} \quad c _n := \frac{c'_n}{\| f_n \| _{\hardy}} \cdot \frac{1}{n}. $$ 
Note, in the above that $\| f ^\mrt \| _{\hardy} = \| f \| _{\hardy}$ for any $f \in \hardy$.

By construction, $\scr{C} _n$, is $L^*-$invariant, so that 
\ba \| A^{'(n)} \| ^2 _{\mr{col}} & = & \left\| \sum _{j=1} ^d A^{'(n) *} _j A^{'(n)} \right\| \\
& = & \left\| \sum P_n L_j P_n L_j ^* P_n \right\| = \left\| P_n \sum L_j L_j ^* P_n \right\| \\
& \leq & \| L L^* \| = \| L \| _{\mr{row}} ^2 = 1. \ea 
We now define 
$$ \cH := \C \oplus \bigoplus _{n=1} ^\infty \scr{C} _n, $$
$$ A := 0 \oplus \bigoplus _{n=1} ^\infty A^{(n)},  \quad b:= \bigoplus _{n=0} ^\infty b_n \quad \mbox{and} \quad  c:= \bigoplus_{n=0}^\infty c_n. $$ Hence, $f \sim (A,b,c)$ is a realization of $f$, and 
\ba \left\| \sum _{|\om | = n } A ^{\om ^*} A^{\om } \right\| & = & \| \mr{Ad} _{A^*,A} ^{\circ n} (I) \| \\
& \leq & \left\| A - \bigoplus_{k=1}^{n-1}A^{(k)}\right\| _{\mr{col}} ^{2n} \leq \left( \sup _{m \geq n}   \sqrt[m]{m^2} \sqrt[m]{\| f_m\| _{\hardy}} \right)^{2n}, \ea since each $A^{(n)}$ is jointly nilpotent of order $n+1$. We conclude that 
\ba r _\sigma (A) & = & \lim _{n\rightarrow \infty} \sqrt[2n]{\left\| \mr{Ad} _{A^*,A} ^{\circ n} (I) \right\|} \\
& = & \lim _{n\rightarrow \infty} \sup _{m \geq n} \sqrt[m]{m^2} \sqrt[2m]{\| f _m \| _{\hardy} ^2} \\
& = & \limsup _{n\rightarrow \infty} \sqrt[n]{n^2} \sqrt[2n]{\| f_n \| _{\hardy} ^2} \\
& = & \limsup \sqrt[2n]{\sum _{|\om| =n} | \hat{f} _\om | ^2 } = \frac{1}{R_f} \leq \frac{1}{r}. \ea 
\end{proof}

By the non-commutative Rota theorem of \cite[Theorem 4.8]{Pop-joint}, given a column $d-$tuple, $A \in \scr{B} (\cH) ^d$ and any $\eps >0$, $A$ is jointly similar to a column $d-$tuple $A' \in \scr{B} (\cH) ^d$ with $\| A ' \| _{\mr{col}} < r_\sigma (A) + \eps$. 

\begin{cor}
Let $F \in \scr{O} _d (r) ^{\ell \times \ell}$. Then, for any $Y \in r \cdot \B ^{(n\times n) \cdot d}$, $F$ has a minimal FM matrix centre realization about $Y$, $F \sim _Y (A,B,C,D)$ so that $\scr{D} ^Y (A) \supseteq r \cdot \B ^{(\N n \times \N n ) \cdot d} = r \cdot \rball \cap \ncun$. 
\end{cor}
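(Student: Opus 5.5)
The plan is to build the realization entrywise and then re-centre it at $Y$. The key quantity to control throughout is the column norm of the centre-$0$ realization, since that is what governs the size of the invertibility domain.

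\textbf{Step 1: realizations at $0$ with small column norm.} For each entry $f_{i,j}$ of $F$, Proposition \ref{odrreal} gives a realization $f_{i,j}\sim (A^{(i,j)},b^{(i,j)},c^{(i,j)})$ with $r_\sigma(A^{(i,j)})\le 1/r$. Put these into a single block-diagonal realization of $F$ as in Subsection \ref{ss:realize}, so $A=\bigoplus A^{(i,j)}$. Joint spectral radius is compatible with direct sums: $\mr{Ad}^{\circ k}_{A^*,A}(I)$ is block diagonal, so $r_\sigma(A)=\max r_\sigma(A^{(i,j)})\le 1/r$.

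Now fix $Y\in r\cdot\B^{(n\times n)\cdot d}$ and choose $\eps>0$ with $\rownorm{Y}<(1/r+\eps)^{-1}$. By the non-commutative Rota theorem \cite[Theorem 4.8]{Pop-joint}, $A$ is jointly similar via some invertible $S$ to a tuple $A'$ with $\colnorm{A'}<1/r+\eps$. Replace $(A,b,c)$ by $(A',S^{-*}b,Sc)$; this realizes the same $F$, because $L_{A'}=(S\otimes I)L_A(S^{-1}\otimes I)$.

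\textbf{Step 2: domain of the centre-$0$ realization.} By the estimate $\|A'\otimes X\|\le\rownorm{X}\colnorm{A'}$, $L_{A'}(X)$ is invertible on $(1/r+\eps)^{-1}\cdot\rball$, which contains $Y$. Since $\eps$ is arbitrary, and $\scr{D}(A')$ does not depend on the similarity, we get $\scr{D}(A)\supseteq\bigcup_\eps (1/r+\eps)^{-1}\rball=r\cdot\rball$.

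\textbf{Step 3: translate to $Y$ and pass to FM form.} Apply the translation of Subsection \ref{ss:realize} to obtain $(\hat A,\hat b,\hat c)_Y$, and then its FM form $(A,B,C,D)_Y$ with $D=F(Y)$. The identity $L_A(X)=(L_A(Y)\otimes I)\,L_{\hat A}(X-Y\otimes I)$ shows that $L_{\hat A}(X-Y\otimes I)$ is invertible exactly when $L_A(X)$ is. Hence $\scr{D}^Y(\hat A)\supseteq r\cdot\rball\cap\ncun$, and the FM pencil has the same $A$-part, so the same domain.

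\textbf{Main obstacle: minimality.} Minimality has to be obtained by compressing to the controllable and observable subspaces of the FM realization. The difficulty is showing that this compression does not shrink the domain of invertibility. Two routes are available. First, I would try to show that the compression of $L_{\hat A}$ to the (semi-invariant) minimal subspace is a corner of a block upper-triangular decomposition of the pencil, so its inverse is the corresponding corner of the full inverse and remains invertible wherever the original pencil is. Failing that, I would use the uniqueness of minimal realizations up to similarity \cite{KMT-matrixreal} together with the fact that $F$ is analytic on $r\cdot\rball$, arguing that singularities of a minimal pencil are detected by $F$ itself.
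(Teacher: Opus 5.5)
\textbf{There is a genuine gap: minimality is never proved.}

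Your Steps 1--3 are sound. They essentially reproduce the domain part of the paper's argument: Proposition \ref{odrreal} and the NC Rota theorem at the centre $0$, then translation, with $\scr{D}^Y(\hat A)=\scr{D}(A)\cap\ncun$. Your use of the similarity invariance of $\scr{D}(A)$, so that one realization serves for every $\eps$, is if anything tidier than the paper's ``$s$ arbitrary'' step.

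The statement, however, asks for a \emph{minimal} realization, and neither of your routes works as written.

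\emph{First route.} The claim that the inverse of the compressed pencil is a corner of the full inverse is false in infinite dimensions. An invertible operator that is upper triangular with respect to $\cH_1\oplus\cH_1^{\perp}$ need not have invertible diagonal corners: the bilateral shift on $\ell^2(\mathbb{Z})$ leaves $\ell^2(\N)$ invariant and restricts there to the unilateral shift. The route can be repaired by a connectedness argument along the segment from $Y\otimes I$. Restrictions of invertible operators to invariant subspaces are bounded below, they start at the identity, and the codimension of the range is locally constant on bounded-below operators. You would have to supply this argument.

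\emph{Second route.} This fails outright. Infinite-dimensional minimal realizations are not unique up to bounded similarity, and their singular sets are not determined by the function. Take $d=1$ and $S$ the unilateral shift on $H^2(\mathbb{D})$. The triple $(S^*,1,e^{z})$ realizes $e^{z}\in\scr{O}_1$, since $\ip{1}{S^{*n}e^{z}}=1/n!$. It is observable, since $\bigvee_n S^n 1=H^2(\mathbb{D})$. It is controllable: $e^{z}$ is entire but not a polynomial, so it has no pseudocontinuation of bounded type and is $S^*$-cyclic by the Douglas--Shapiro--Shields theorem. Yet $I-zS^*$ is singular for every $|z|\geq 1$. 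So minimality by itself gives no control of the domain; it has to be produced in a way that preserves the quantity governing the domain.

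\textbf{The missing idea} is to make the realization minimal at the centre $0$, \emph{before} translating, which is the order the paper uses.

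Compress the centre-$0$ FM realization to $\cH_2:=\cH_c\ominus(\cH_c\cap\cH_o^{\perp})$, where $\cH_c$ and $\cH_o$ are its controllable and observable subspaces. Since $\cH_c$ and $\cH_c\cap\cH_o^{\perp}$ are invariant, $\cH_2$ is semi-invariant. Hence $(P_{\cH_2}A|_{\cH_2})^{\om}=P_{\cH_2}A^{\om}|_{\cH_2}$ for every word $\om$. The compression therefore realizes the same function and is minimal. It also satisfies $\sum_{|\om|=k}(P_{\cH_2}A^{\om}P_{\cH_2})^*(P_{\cH_2}A^{\om}P_{\cH_2})\leq P_{\cH_2}\,\mr{Ad}^{\circ k}_{A^*,A}(I)\,P_{\cH_2}$. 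So its joint spectral radius is still at most $1/r$, and your Step 2 applies to it verbatim.

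Translating this minimal realization to $Y$ preserves minimality by \cite[Theorem 5.2]{KMT-matrixreal}, which is what the paper invokes. Your Step 3 identity then gives $\scr{D}^Y\supseteq r\cdot\rball\cap\ncun$.

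Since you assemble the realization of $F$ first, compressing the assembled realization also gives minimality for $F$ as a whole. A direct sum of entrywise minimal realizations, which is the paper's assembly, need not have this property, e.g.\ when two entries in the same column coincide.
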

\begin{proof}
Each matrix-entry, $f_{i,j}$, of $F$ belongs to $\scr{O} _d (r)$, and hence has radius of convergence at least $r>0$. By the previous proposition and the NC Rota theorem \cite[Theorem 4.8]{Pop-joint}, given any fixed $s>r$, each matrix entry, $f_{i,j}$, of $F$ has a minimal (descriptor, and hence) FM realization (about $0$), $f_{i,j} \sim (A^{'(i,j)}, B^{'(i,j)}, C^{'(i,j)}, D^{'(i,j)})$, with $\| A^{'(i,j)} \| _{\mr{col}} \leq \frac{1}{s}$, so that $\scr{D} (A^{'(i,j)}) \supseteq s \cdot \rball$ for all $1 \leq i,j \leq \ell$ and any $s>r$. Hence, by \cite[Section 5.2, Theorem 5.2]{KMT-matrixreal}, we can translate these FM realizations from $0$ to $Y$ to obtain minimal FM matrix-centre realizations of each $f_{i,j}$ about $Y$, $f_{i,j} \sim _Y  (A^{(i,j)}, B^{(i,j)}, C^{(i,j)}, D^{(i,j)})$, with $$ \scr{D} ^Y (A^{(i,j)}) = \scr{D} (A^{'(i,j)}) \cap \ncun \supseteq s \cdot \B ^{(\N n \times \N n ) \cdot d}. $$ Hence, as described in the previous section, we can construct a minimal FM matrix-centre realization for the entire matrix $F$ about $Y$, $F \sim _Y (A,B,C,D)$, with $\scr{D} ^Y (A) \supseteq  s \cdot \B ^{(\N n \times \N n ) \cdot d}$. Since $s >r$ is arbitrary, we conclude that $\scr{D} ^Y (A) \supseteq  r \cdot \B ^{(\N n \times \N n ) \cdot d}$.
\end{proof}

We will need the following fact in the proofs of some of our main results.
\begin{prop} \label{realatY}
Let $H \in \scr{O} _d (r) ^{\ell \times \ell}$ be full and $\Xi \in \mf{M} ^{1\times d}$ obey $\| \Xi \| _{\mr{row}} <r$. Then there exists $Y \in r \cdot \bdn$ and a matrix-centre realization of $H$ about $Y$, $H \sim _Y (A,B,C,D)$, $D = H(Y)$, so that $D$ is invertible and $\| A \| _{\mr{col}} \cdot ( \| \Xi \| _{\mr{row}} + \| Y \| _{\mr{row}} ) <1$. 
\end{prop}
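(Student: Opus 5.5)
We need to find a point $Y$ near the origin at which $H(Y)$ is invertible, and then build a realization of $H$ about $Y$ whose column norm is small enough. The plan has three steps.

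\textbf{Step 1: radii.} Fix $s$ with $\| \Xi \| _{\mr{row}} < s < r$. Set $\epsilon := (r - s)/4 > 0$. We aim for $\| Y \| _{\mr{row}} < \epsilon$ and $\| A \| _{\mr{col}} \leq (s + 2\epsilon)^{-1}$. Granted these,
$$\| A \| _{\mr{col}} \left( \| \Xi \| _{\mr{row}} + \| Y \| _{\mr{row}} \right) < \frac{s + \epsilon}{s + 2\epsilon} < 1,$$
which is the required inequality.

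\textbf{Step 2: choosing $Y$.} Since $H$ is full, \cite[Lemma 5.8]{AMS-ncentire} gives some level at which $\det H(X)$ is not identically zero on $r \cdot \rball$. Moreover, for $n \gg 0$, the zero set $\{ X : \det H(X) = 0 \}$ is a proper analytic subvariety of the connected domain $r \cdot \bdn$. A proper analytic subvariety of a connected domain has empty interior, so it cannot contain the open set $\epsilon \cdot \bdn$. Hence there is $Y \in \epsilon \cdot \bdn \subseteq r \cdot \bdn$ with $D := H(Y)$ invertible.

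\textbf{Step 3: the realization about $Y$.} Apply Proposition~\ref{odrreal} to each entry $h_{i,j} \in \scr{O} _d (r)$. This gives a realization $h_{i,j} \sim (A'^{(i,j)}, b, c)$ with $r_\sigma (A'^{(i,j)}) \leq 1/r$. The NC Rota theorem \cite[Theorem 4.8]{Pop-joint} then lets us replace $A'^{(i,j)}$ by a jointly similar tuple with $\| A'^{(i,j)} \| _{\mr{col}} \leq (r - \epsilon)^{-1}$. Adjusting $b$ and $c$ by the same similarity leaves the transfer function unchanged.

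We then take the direct-sum FM realization of $H$ about $0$, as in Section~\ref{ss:realize}. Its column norm is still at most $(r - \epsilon)^{-1}$, because a block-diagonal direct sum has column norm equal to the maximum of the blocks.

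Next we translate this realization to $Y$. By the computation in Section~\ref{ss:realize}, the new coefficients are $\hat{A}_j (G) = L_{A'} (Y) ^{-1} (A'_j \otimes G)$, with $B$ and $C$ built accordingly; the constant term is automatically $D = H(Y)$. This gives the estimate
$$\| \hat{A} \| _{\mr{col}} \leq \| L_{A'} (Y) ^{-1} \| \, \| A' \| _{\mr{col}}.$$
Using a Neumann series bound on the first factor,
$$\| L_{A'} (Y) ^{-1} \| \leq \left( 1 - \| A' \| _{\mr{col}} \| Y \| _{\mr{row}} \right) ^{-1}.$$
Combining the two,
$$\| \hat{A} \| _{\mr{col}} \leq \frac{1}{(r - \epsilon) - \| Y \| _{\mr{row}}} \leq \frac{1}{r - 2\epsilon} \leq \frac{1}{s + 2\epsilon}.$$
This is exactly the bound Step 1 needs.

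\textbf{Main obstacle: the norm of the translated coefficients.} The delicate point is that $\| A \| _{\mr{col}}$ must be computed for the maps $\hat{A}_j : \C ^{n \times n} \to \scr{B}(\cH \otimes \C ^n)$ with respect to the row norm on $\C ^{(n \times n) \cdot d}$. I would verify directly that
$$\Big\| \sum_j (A'_j \otimes G_j) \Big\| \leq \| A' \| _{\mr{col}} \, \| G \| _{\mr{row}},$$
which is the same factorization argument used for $A \otimes X$ in Section~\ref{ss:realize}. Composing with the bounded operator $L_{A'}(Y)^{-1}$ then gives the stated column-norm bound.

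If minimality of the realization is wanted, one can compress to the minimal subspace. Compression does not increase the norm, and the Corollary above guarantees that this step goes through.
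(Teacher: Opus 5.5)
Your proof is correct and is essentially the paper's argument: realize each entry with small column norm via Proposition~\ref{odrreal} and the NC Rota theorem, take the direct sum, pick $Y$ near $0$ off the proper analytic zero set of $\det H$, and translate to $Y$. Your explicit Neumann-series estimate (which checks out, since $r-2\epsilon=s+2\epsilon=(r+s)/2$) makes precise the paper's remark that $L_a(Y)\to I$ and $\|A\|_{\mr{col}}\to\|a\|_{\mr{col}}$ as $Y\to 0$; the only caveat is $r=+\infty$, where $\epsilon=(r-s)/4$ is meaningless and you should run the same argument with any finite $r'>\|\Xi\|_{\mr{row}}$ in place of $r$.
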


\begin{proof}
Each matrix-entry of $H$ has a realization at $0$, $H_{i,j} \sim _0 (a^{(i,j)}, b^{(i,j)}, c^{(i,j)})$, where we can assume that the joint (column) spectral radius of each $a^{(i,j)} \in \scr{B} (\cH) ^d$ is at most $\frac{1}{r}$, for $1\leq i,j \leq \ell$. Hence, given $\Xi \in \mf{M} ^{1\times d}$ with $\| \Xi \| _{\mr{row}} <r$, we can choose each $a^{(i,j)}$ so that $\max _{i,j} \| a ^{(i,j)} \| _{\mr{col}} \cdot \| \Xi \| _{\mr{row}} <1$. A realization for $H$ at $0$, $H \sim _0 (a,b,c)$ is then constructed by setting $a := \bigoplus _{i,j =1} ^\ell a^{(i,j)} \in \scr{B} (\cH ^{\ell ^2} ) ^{1 \otimes d}$ and defining $b,c \in \scr{B} (\C ^\ell , \cH ^{\ell ^2})$ as described at the end of Subsection \ref{ss:realize}. Hence, $\| a \| _{\mr{col}} \cdot \| \Xi \| _{\mr{row}} <1$.

Again, as described in Subsection \ref{ss:realize}, we can translate the realization of $H$ at $0$ to any point $Y \in r \cdot \bdn$ to obtain an FM realization of $H$ about $Y$, $H \sim _Y (A,B,C,D)$, with $A _j : \C ^{n \times n} \rightarrow \scr{B} (\cH ^{n \ell ^2})$, $B_j : \C ^{n\times n} \rightarrow \cH ^{n \ell ^2}$, $C : \cH ^{n \ell ^2} \rightarrow \C ^{n\ell}$ and $D = H(Y) \in \C ^{n \ell \times n\ell}$, by the formulas
$$ A_j (G) := L_a (Y) ^{-1} a_j \otimes G, \quad B_j (G) := L_a (Y) ^{-1} a_j \otimes G L_a (Y) ^{-1} c\otimes I_n, $$ and $C := b^* \otimes I_n$. By \cite[Lemma 5.8, Corollary 5.9]{AMS-ncentire}, there is an $n \in \N$ so that the set of all $Y \in r \cdot \bdn$ with the property that $H(Y)$ is invertible is analytic-Zariski dense. It follows from the above formulas for a matrix-centre realization of $H$ about any $Y \in r \cdot \bdn$, that if we choose $\| Y \| _{\mr{row}}$ sufficiently small, $L_a (Y)$ will be arbitrarily close to $I _{\cH ^{n \ell ^2}}$ and $\| A \| _{\mr{col}}$ will be arbitrarily close to $\| a \| _{\mr{col}}$. Hence, since $\| a \| _{\mr{col}} \cdot \| \Xi \| _{\mr{row}} <1$, we can choose $Y \in r \cdot \bdn$ sufficiently close to $0$, so that $H(Y)$ is invertible and so that $H \sim _Y (A,B,C,D)$, $D = H(Y)$, with 
$$ \| A \| _{\mr{col}} \cdot ( \| \Xi \| _{\mr{row}} + \| Y \| _{\mr{row}} ) < 1. $$ 
\end{proof}

\section{Singularities of linear pencils and NC functions} \label{sec:pencils}

Given $T \in \cdn$ and $\Xi \in \mf{M} ^{1\times d}$, let $T \otimes \Xi = \sum T_j \otimes \Xi _j$.
Recall, $\cJ := L^2 (\mf{M}, \tau)$.

\begin{prop} \label{Schursing}
Let $\Xi \in \mf{M} ^{1\times d}$ and let $H$ be the $\ell \times \ell$ matrix-valued quantized transfer function of a matrix-centre realization at $Y \in \cdn$, $H \sim _Y (A,B,C,D)$, so that $D=H(Y)$ is invertible. If $T \in \cdn$ is such that $L_A (T \otimes \Xi - Y \otimes I_\cJ)$ is invertible, then $H (T \otimes \Xi)$ has non-trivial kernel or co-kernel if and only if $L _{A^{(-1)}} (T \otimes \Xi - Y \otimes I_\cJ)$ has non-trivial kernel or co-kernel, respectively, where $H^{-1} \sim _Y (A^{(-1)}, B^{(-1)}, C^{(-1)}, D^{-1} )$ is the realization of $H^{-1}$ about $Y$. 

Moreover, 
$$ \Phi _T := -D ^{-1} C \otimes I_\cJ  | _{\nbker L_{A^{(-1)}} (T \otimes \Xi - Y \otimes I_\cJ)} : \nbker L_{A^{(-1)}} (T \otimes \Xi - Y \otimes I_\cJ) \twoheadrightarrow \nbker H(T \otimes \Xi), $$ is a linear isomorphism with inverse 
$$ \Phi ^{-1} _T := L_A (T \otimes \Xi - Y \otimes I_\cJ) ^{-1} B ( T \otimes \Xi - Y \otimes I_\cJ) | _{\nbker H(T \otimes \Xi)} : \nbker H(T \otimes \Xi ) \twoheadrightarrow \nbker L_{A^{(-1)}} (T \otimes \Xi - Y \otimes I_\cJ). $$
Similarly, 
$$ \Psi _T := -D^{-*} \otimes I _\cJ  \, B (T \otimes \Xi - Y \otimes I_\cJ) ^* | _{\nbker L_{A^{(-1)}} (T \otimes \Xi - Y \otimes I_\cJ) ^* }: \nbker L_{A^{(-1)}} (T \otimes \Xi - Y \otimes I_\cJ) ^* \twoheadrightarrow \nbker H (T \otimes \Xi ) ^*, $$ is an isomorphism with inverse
$$ \Psi ^{-1} _T := L_A (T \otimes \Xi - Y \otimes I_\cJ) ^{-*}  C^* \otimes I_\cJ | _{ \nbker H(T \otimes \Xi) ^*} : \nbker H(T\otimes \Xi) ^* \twoheadrightarrow \nbker L_{A^{(-1)}} (T \otimes \Xi - Y \otimes I_\cJ) ^*. $$
\end{prop}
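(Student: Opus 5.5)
This is a Schur complement argument applied to the block operator built from the FM realization. Abbreviate $\mathcal{L} := L_A(T\otimes\Xi - Y\otimes I_\cJ)$, which is invertible by hypothesis. Write $\mathcal{B} := B(T\otimes\Xi - Y\otimes I_\cJ)$, viewed as an operator from $\C^{n\ell}\otimes\cJ$ into $\cH\otimes\C^{n}\otimes\cJ$; here $B_j(G)$ acts as an operator whose domain is the $\C^{n\ell}$ factor, via the FM conventions. Put $\mathcal{C} := C\otimes I_\cJ$ and $\mathcal{D} := D\otimes I_\cJ$, the latter invertible since $D$ is. The FM formula (\ref{FMreal}) then reads
\[
H(T\otimes\Xi) = \mathcal{D} + \mathcal{C}\,\mathcal{L}^{-1}\mathcal{B}.
\]
Evaluating at the operator point $T\otimes\Xi$, in place of a matrix point, is justified because the realization formulas are polynomial or inverse expressions in the coefficients. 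From the formulas for $A^{(-1)}$ we also get, by linearity in $X$,
\[
L_{A^{(-1)}}(T\otimes\Xi - Y\otimes I_\cJ) = \mathcal{L} + \mathcal{B}\,\mathcal{D}^{-1}\mathcal{C}.
\]
For this identity I would check that the sign conventions match the definition $A^{(-1)}_j(G) = A_j(G) - B_j(G)D^{-1}C$ together with $L_A = I - \sum A_j(\cdot)$.

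Next I would form the block operator
\[
\mathbb{M} := \begin{pmatrix} \mathcal{L} & -\mathcal{B} \\ \mathcal{C} & \mathcal{D}\end{pmatrix}.
\]
Its Schur complement with respect to $\mathcal{L}$ is $\mathcal{D} + \mathcal{C}\mathcal{L}^{-1}\mathcal{B} = H(T\otimes\Xi)$. Its Schur complement with respect to $\mathcal{D}$ is $\mathcal{L} + \mathcal{B}\mathcal{D}^{-1}\mathcal{C} = L_{A^{(-1)}}(\cdots)$. The standard triangular factorizations, which use only bounded invertible triangular factors because $\mathcal{L}$ and $\mathcal{D}$ are boundedly invertible, give linear isomorphisms
\[
\nbker H(T\otimes\Xi) \cong \nbker \mathbb{M} \cong \nbker L_{A^{(-1)}}(\cdots),
\]
and the same for the kernels of the adjoints. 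This proves the ``if and only if'' for kernels and for co-kernels.

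To identify the explicit maps, I would verify the kernel statement directly rather than through $\mathbb{M}$. Suppose $x\in\nbker L_{A^{(-1)}}$, so $\mathcal{L}x = -\mathcal{B}\mathcal{D}^{-1}\mathcal{C}x$. Set $y := -\mathcal{D}^{-1}\mathcal{C}x$. Then $\mathcal{L}x = \mathcal{B}y$, so $x = \mathcal{L}^{-1}\mathcal{B}y$. Hence
\[
H y = \mathcal{D}y + \mathcal{C}\mathcal{L}^{-1}\mathcal{B}y = -\mathcal{C}x + \mathcal{C}x = 0.
\]
Conversely, suppose $Hy = 0$ and put $x := \mathcal{L}^{-1}\mathcal{B}y$. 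Then $\mathcal{C}x = -\mathcal{D}y$, so $y = -\mathcal{D}^{-1}\mathcal{C}x$. It follows that
\[
(\mathcal{L} + \mathcal{B}\mathcal{D}^{-1}\mathcal{C})x = \mathcal{B}y - \mathcal{B}y = 0.
\]
The two compositions are the identity by these same relations. So $\Phi_T$ and $\Phi_T^{-1}$ are mutually inverse, and injectivity of each holds automatically.

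For the co-kernels I would take adjoints:
\[
H^* = \mathcal{D}^* + \mathcal{B}^*\mathcal{L}^{-*}\mathcal{C}^*, \qquad L_{A^{(-1)}}^* = \mathcal{L}^* + \mathcal{C}^*\mathcal{D}^{-*}\mathcal{B}^*.
\]
The identical computation, with the roles of $\mathcal{B}$ and $\mathcal{C}^*$ swapped, yields $\Psi_T = -\mathcal{D}^{-*}\mathcal{B}^*$ and $\Psi_T^{-1} = \mathcal{L}^{-*}\mathcal{C}^*$.

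The only real obstacle is bookkeeping. One must make precise how $B(T\otimes\Xi - Y\otimes I_\cJ)$, built from the linear maps $B_j:\C^{n\times n}\to\cH$, acts as an operator between the correct amplified spaces, so that the two displayed Schur identities hold with the stated signs. Once those identities are established, the algebra above is routine.
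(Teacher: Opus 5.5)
Your proposal is correct and is essentially the paper's proof. The paper builds the same block operator, with $B(T\otimes\Xi - Y\otimes I_\cJ)$ and $-D\otimes I_\cJ$ in place of your $-\mathcal{B}$ and $\mathcal{D}$; it reads off $H(T\otimes\Xi)$ and $L_{A^{(-1)}}(T\otimes\Xi - Y\otimes I_\cJ)$ as the two Schur complements and passes to adjoints for the co-kernels. The only difference is that the paper gets the explicit maps $\Phi_T,\Psi_T$ and their inverses by citing the Schur-complement kernel lemma (Lemma \ref{MarSha}), whereas you verify them directly, and your computation checks out with the stated signs.
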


In the above proposition statement, recall that we assume that each matrix-entry, $H^{(i,j)}$ of $H$ is a quantized transfer function that has a realization, $H^{(i,j)} \sim _Y (A^{(i,j)}, B^{(i,j)}, C^{(i,j)}, D^{(i,j)})$, where $Y \in \cdn$. 
We assume $A^{(i,j)} _k : \C ^{n\times n} \rightarrow \scr{B} (\cH)$, $1\leq k \leq d$ for a fixed, separable Hilbert space $\cH$, so that $A_k := \oplus _{i,j =1} ^\ell A_k ^{(i,j)} : \C ^{n \times n} \rightarrow \scr{B} (\cH ^{\ell ^2} )$. Similarly, as before $B_k \in \scr{B} (\C^n \otimes \C ^\ell, \cH ^{\ell ^2})$, $C \in \scr{B} (\cH ^{\ell ^2}, \C ^n \otimes \C ^\ell)$ and $D \in \C ^{n\ell \times n \ell}$. 

If $M:= \bsm A & B \\ C & D \esm$ is a block operator matrix with invertible $A,D$, then the Schur complements by $A$ and $D$ are:
$$ M/D := A - B D^{-1} C, \quad \mbox{and} \quad M/A := D - C A^{-1} B. $$
\begin{lemma}{(\cite[Lemma 2.2]{MS-peak})} \label{MarSha}
Let $M := \bsm A & B \\ C & D \esm : \cH \oplus \cJ \rightarrow \cH \oplus \cJ$ be a block operator matrix so that $A \in \scr{B} (\cH)$, $B \in \scr{B} (\cJ , \cH)$, $C \in \scr{B} (\cH , \cJ)$ and $D \in \scr{B} (\cJ)$. If both $A, D$ are invertible, then 
$$ \Phi := D^{-1} C | _{\nbker M/D} : \nbker M/D \twoheadrightarrow \nbker M/A, $$ is an isomorphism with inverse 
$$ \Phi ^{-1} := A^{-1} B | _{\nbker M/A} : \nbker M/A \twoheadrightarrow \nbker M/D. $$
If $A$ is invertible then 
$$ \nbker M = \bpm -A^{-1} B \\ I \epm \nbker M/A, $$ and if $D$ is invertible then
$$ \nbker M = \bpm I \\ -D^{-1} C \epm \nbker M/D. $$ 
\end{lemma}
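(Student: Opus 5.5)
The statement to prove is Lemma \ref{MarSha}. The plan is a direct verification using the block factorizations of $M$ through its Schur complements. Throughout, $M/D = A - BD^{-1}C \in \scr{B}(\cH)$ and $M/A = D - CA^{-1}B \in \scr{B}(\cJ)$.

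\textbf{Step 1: the kernel of $M$ when $A$ is invertible.}
Suppose $A$ is invertible and $M\bsm x \\ y \esm = 0$. The first row gives $Ax + By = 0$, so $x = -A^{-1}By$. Substituting into the second row gives $Cx + Dy = (D - CA^{-1}B)y = (M/A)\,y = 0$. Conversely, if $y \in \nbker M/A$, then $\bsm -A^{-1}By \\ y \esm$ satisfies both rows. Hence $\nbker M = \bsm -A^{-1}B \\ I \esm \nbker M/A$. This is simply the factorization
$$ M = \bpm I & 0 \\ CA^{-1} & I \epm \bpm A & 0 \\ 0 & M/A \epm \bpm I & A^{-1}B \\ 0 & I \epm $$
read off on kernels.

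\textbf{Step 2: the symmetric case when $D$ is invertible.}
The same argument with the roles of the rows exchanged gives $\nbker M = \bsm I \\ -D^{-1}C \esm \nbker M/D$.

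\textbf{Step 3: the isomorphism $\Phi$.}
When both $A$ and $D$ are invertible, Steps 1 and 2 give two parametrizations of the same space $\nbker M$. The maps
$$ y \mapsto \bpm -A^{-1}By \\ y \epm \quad \mbox{and} \quad x \mapsto \bpm x \\ -D^{-1}Cx \epm $$
are injective, since they have an identity component. Hence each is a linear isomorphism onto $\nbker M$. Composing one with the inverse of the other (read off the appropriate coordinate) gives a bijection $\nbker M/D \to \nbker M/A$.

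This composite sends $x \mapsto -D^{-1}Cx$, with inverse $y \mapsto -A^{-1}By$. The lemma as stated, without minus signs, follows by composing with $-I$, which preserves kernels. One can also check directly that $x \in \nbker M/D$ implies $D^{-1}Cx \in \nbker M/A$. Indeed, $Ax = BD^{-1}Cx$, so
$$ (D - CA^{-1}B)D^{-1}Cx = Cx - CA^{-1}(BD^{-1}Cx) = Cx - CA^{-1}Ax = 0. $$
In the same way, $A^{-1}B$ followed by $D^{-1}C$ is the identity on $\nbker M/A$: for $y \in \nbker M/A$ we have $Dy = CA^{-1}By$, so $D^{-1}CA^{-1}By = y$. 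The symmetric computation shows that $D^{-1}C$ followed by $A^{-1}B$ is the identity on $\nbker M/D$.

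\textbf{Main obstacle.}
There is no real difficulty here. The only point needing care is the bookkeeping of signs and domains: the two composites must be checked to be mutual inverses on the kernels, not on the whole spaces. This is exactly the identity computation at the end of Step 3.
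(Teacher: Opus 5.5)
Your proof is correct. For the two descriptions of $\nbker M$ you argue as the paper does: the paper reads them off the factorization $M = \bpm I & 0 \\ CA^{-1} & I \epm \bpm A & 0 \\ 0 & M/A \epm \bpm I & A^{-1}B \\ 0 & I \epm$ (and its mirror image for $D$), which amounts to your row-by-row substitution.

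The difference lies in the first assertion. The paper does not prove that $\Phi = D^{-1}C|_{\nbker M/D}$ is a bijection onto $\nbker M/A$ with inverse $A^{-1}B|_{\nbker M/A}$; it cites \cite[Lemma 2.2]{MS-peak} for this and proves only the kernel statements. You instead derive it from those kernel statements by matching the two parametrizations of $\nbker M$. You then confirm it with the identities $Ax = BD^{-1}Cx$ on $\nbker M/D$ and $Dy = CA^{-1}By$ on $\nbker M/A$. This makes the lemma self-contained at no extra cost.

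Your sign bookkeeping is also sharper than the remark the paper places after the lemma. Matching the parametrizations gives $\nbker M = \bpm I \\ -\Phi \epm \nbker M/D = \bpm -\Phi^{-1} \\ I \epm \nbker M/A$, and the minus signs cannot be dropped. For example, for $A=B=C=D=1$ on $\C \oplus \C$, $\nbker M$ is spanned by $(1,-1)$, not $(1,1)$. So your observation that the unsigned $\Phi$ appears only after composing with $-I$ is the correct way to reconcile the two.
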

The above lemma implies, in particular, that if both $A$ and $D$ are invertible then
$$ \nbker M = \bpm I \\ \Phi \epm \nbker M/D = \bpm \Phi ^{-1} \\ I \epm \nbker M/A. $$
\begin{proof}
Only the final statements on the kernel of $M$ need to be proven. The previous statements are proven in \cite[Lemma 2.2]{MS-peak}. 

If $A$ is invertible then
$$ M = \bpm I & 0 \\ C A^{-1} & I \epm \bpm A & 0 \\ 0 & M /A \epm \bpm I & A^{-1} B \\ 0 & I \epm, $$ so that if $x \in \nbker M/A$ then
$$ \bpm I & - A^{-1} B \\ 0 & I \epm \bpm 0 \\ x \epm = \bpm - A^{-1} B x \\ x \epm \in \nbker M. $$ Conversely, if $A$ is invertible and $\bsm x \\ y \esm \in \nbker M$ then 
\ba \bpm 0 \\ 0 \epm & = & \bpm A & 0 \\ 0 & M /A \epm \bpm I & A^{-1} B \\ 0 & I \epm \bpm x \\ y \epm \\
& = & \bpm A & 0 \\ 0 & M /A \epm  \bpm x + A^{-1} B y \\ y \epm. \ea
Hence, $y \in \nbker M/A$ and since $A$ is invertible, $0 = Ax + By$ so that $x= -A^{-1}B y$. 
The final claim on $\nbker M/D$ follows from a symmetric Schur complement argument.
\end{proof}

\begin{proof}[Proof of Proposition \ref{Schursing}]
Consider the block operator--matrix,
$$ M(\Xi) := \bpm L_A (T \otimes \Xi  - Y \otimes I_{ \cJ}) & B (T \otimes \Xi - Y \otimes I_\cJ) \\  C \otimes I_\cJ & -D \otimes I_\cJ \epm. $$
Consider the two Schur complement factorizations of $M(\Xi)$: First, 
$$ M(\Xi)  =  R \cdot \underbrace{\bpm L_A (T \otimes \Xi - Y\otimes I_\cJ) + B(T \otimes \Xi -Y \otimes I_\cJ) D^{-1} C \otimes I _\cJ  & 0 \\ 0 & -D \otimes I _\cJ \epm}_{= \bpm L_{A^{(-1)}} (T \otimes \Xi - Y \otimes I _\cJ) & 0 \\ 0 & -D \otimes I _\cJ \epm } \cdot S , $$
where 
$$ R  := \bpm I_{\cH ^{\ell ^2} \otimes \cJ} & - B(T \otimes \Xi - Y\otimes I _\cJ) D^{-1} \otimes I_\cJ \\ 0 &  I_{\ell n} \otimes I_\cJ  \epm, \quad \mbox{and} \quad S := \bpm I_{\cH ^{\ell^2} \otimes \cJ} & 0 \\  C \otimes I_\cJ & -D \otimes I _\cJ \epm, $$ are invertible matrices, by assumption. Here, we assume that the realization $(A,B,C,D)_Y$ of $F$ about $Y$ is constructed from the $\ell ^2$ minimal FM matrix-centre realizations of each matrix-entry, $f_{i,j}$, of $F$ about $Y$, $(A^{(i,j)}, B^{(i,j)}, C^{(i,j)}, D^{(i,j)} )_Y$, with each of these $\ell ^2$ realizations defined in terms of the same complex, separable (or finite) Hilbert space $\cH$, \emph{i.e.} $A^{(i,j)} _k : \C ^{n\times n} \rightarrow \scr{B} (\cH)$, so that $A_k : \C ^{n \times n} \rightarrow \scr{B} ( \cH \otimes \C ^{\ell ^2}) \simeq \scr{B} (\cH) \otimes \C ^{\ell ^2 \times \ell ^2}$. In the above, we have written $\cH ^{\ell ^2}$ in place of $\cH \otimes \C ^{\ell ^2}$ to simplify notation. Also, in the above, since $D= H(Y)$ is invertible, $H^{-1} \sim _Y (A^{(-1)}, B^{(-1)}, C^{(-1)}, D^{-1})$ where recall that 
$$ A^{(-1)} _j (G) = A_j (G) - B_j (G) D^{-1} C. $$ 

The second Schur complement factorization is then
$$M(\Xi ) = R' \underbrace{\bpm L_A (T \otimes \Xi - Y\otimes I _\cJ)  & 0 \\ 0 & - D \otimes I_\cJ -  C \otimes I_\cJ  L_A ( T \otimes \Xi   - Y \otimes I_\cJ ) ^{-1} B (T \otimes \Xi  - Y\otimes I_\cJ) \epm}_{= \bpm  L_A (T \otimes \Xi - Y\otimes I_\cJ)  & 0 \\ 0 & - H(T \otimes \Xi) \epm } S', $$ 
where 
$$ R' :=  \bsm I_{\cH ^{\ell ^2} \otimes \cJ} & 0 \\  C \otimes I_\cJ  L_A ( T \otimes \Xi  - Y\otimes I_\cJ) ^{-1} & I_{\ell n} \otimes I_\cJ \esm  \quad \mbox{and} \quad S' := \bsm I_{\cH ^{\ell ^2} \otimes I_\cJ} & L_A (T \otimes \Xi - Y \otimes I_\cJ ) ^{-1} B (T \otimes \Xi - Y\otimes I_\cJ) \\  0 & I _{\ell n} \otimes I_\cJ \esm, $$ are again, both invertible. 

It follows that $H (\Xi)$ has non-trivial kernel/co-kernel if and only if $L_{A^{(-1)}} (T \otimes \Xi - Y \otimes I)$ does. The remaining statements follow from Lemma \ref{MarSha} applied to $M (\Xi)$ and $M(\Xi) ^*$ using that if $M= \bsm A & B \\ C & D \esm$ with invertible $A$ and $D$, then $(M/A) ^* = M^* / A^*$ and $(M/D) ^* = M^* / D^*$. 
\end{proof}

\section{Singular linear pencils and the relation co-dimension} \label{sec:sing_lin_pencils}

Here we extend some of the results of \cite[Section 3]{freefield} to infinite--dimensional realizations. Let $A := \bsm A _0 \\ A_1 \\ \vds \\ A_d \esm \in \scr{B} (\cH) ^{d+1}$. The non-monic, \emph{affine linear pencil} of $A$ is:
$$ \hat{A} (Z) := I_n \otimes A_0 + Z \otimes A := I_n \otimes A _0 + \sum _{j=1} ^d Z_j \otimes A_j; \quad \quad Z \in \cdn. $$ Let $(h_i)$ be a fixed orthonormal basis of the complex and separable Hilbert space, $\cH$, let $\TC (\cH)$ denote the trace-class operators on $\cH$ and let $\HS (\cH)$ denote the Hilbert--Schmidt operators. 

\begin{lemma} \label{kercomm} Let $A \in \scr{B} (\cH) ^{d+1}$.
Suppose that $e := \sum _i h_i \otimes e_i$, $f:= \sum _j h_j \otimes f_j$, $e,f \in \cH \otimes \cJ$ obey $\hat{A} (\Xi) f =0 = \hat{A} (\Xi) ^* e$, where $\Xi = ( \Xi _1, \cdots, \Xi _d ) \in \mf{M} ^{1 \times d}$ is arbitrary. Define
$$ T_k := \sum _{i,j} A^{(k)} _{i,j} \ip{e_i}{\cdot} _{\cJ} f_j; \quad \quad 0 \leq k \leq d, \quad A^{(k)} _{i,j} := \ip{h_i}{A^{(k)} h_j}_{\cH}. $$ Then $T_k \in \TC (\cJ)$ obey
$$ \sum _{k=1} ^d [T_k, \Xi _k] =0. $$ 
\end{lemma}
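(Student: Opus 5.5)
The plan is to recast the $T_k$ as products of two Hilbert--Schmidt operators with a bounded operator, which gives trace-class membership at once. The commutator identity will then follow by showing that both $\sum_k T_k \Xi_k$ and $\sum_k \Xi_k T_k$ equal $-T_0$, using the equations $\hat A(\Xi) f = 0$ and $\hat A(\Xi)^* e = 0$ respectively. Throughout I write $\Xi_0 := I_\cJ$, so that $\hat A(\Xi) = \sum_{k=0}^d A^{(k)} \otimes \Xi_k$ acting on $\cH \otimes \cJ$.

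\textbf{Step 1 (trace class).} Since $e, f \in \cH \otimes \cJ$, we have $\sum_i \|e_i\|^2 = \|e\|^2 < \infty$ and $\sum_j \|f_j\|^2 = \|f\|^2 < \infty$. Define the following operators:
\begin{itemize}
\item $E : \cJ \to \cH$ by $Ex := \sum_i \ip{e_i}{x}_\cJ \, h_i$;
\item $F : \cH \to \cJ$ by $Fh := \sum_j \ip{h_j}{h}_\cH \, f_j$.
\end{itemize}
Both are Hilbert--Schmidt, with $\|E\|_{\HS} = \|e\|$ and $\|F\|_{\HS} = \|f\|$. Let $\tilde A^{(k)}$ be the operator on $\cH$ whose matrix in the basis $(h_i)$ is the transpose of that of $A^{(k)}$. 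Equivalently, $\tilde A^{(k)} = J A^{(k)*} J$, where $J$ is complex conjugation with respect to $(h_i)$; hence $\tilde A^{(k)}$ is bounded with $\|\tilde A^{(k)}\| = \|A^{(k)}\|$. A direct check on basis vectors gives $T_k = F \tilde A^{(k)} E$. A product of two Hilbert--Schmidt operators with a bounded one is trace class, so $T_k \in \TC(\cJ)$. This factorization also shows that the double series defining $T_k$ converges in operator norm, which justifies the rearrangements of sums used below.

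\textbf{Step 2 (componentwise equations).} Pair $\hat A(\Xi) f = 0$ against $h_i \otimes \cdot$. For every $i$ this gives
\[ \sum_{k=0}^d \sum_j A^{(k)}_{i,j} \, \Xi_k f_j = 0 \quad \text{in } \cJ. \]
Similarly, $\hat A(\Xi)^* e = \sum_k (A^{(k)*} \otimes \Xi_k^*) e = 0$ gives, for every $j$,
\[ \sum_{k=0}^d \sum_i \overline{A^{(k)}_{i,j}} \, \Xi_k^* e_i = 0. \]
Each inner sum converges in $\cJ$ because it is a coordinate of a norm-convergent vector in $\cH \otimes \cJ$.

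\textbf{Step 3 (the commutator).} For $x \in \cJ$, first compute
\[ \sum_{k=1}^d T_k \Xi_k x = \sum_j \Big\langle \sum_{k=1}^d \sum_i \overline{A^{(k)}_{i,j}} \, \Xi_k^* e_i , x \Big\rangle f_j = -\sum_j \Big\langle \sum_i \overline{A^{(0)}_{i,j}} \, e_i, x \Big\rangle f_j = -T_0 x, \]
where the middle equality uses the second family of equations from Step 2. Symmetrically,
\[ \sum_{k=1}^d \Xi_k T_k x = \sum_i \ip{e_i}{x} \sum_{k=1}^d \sum_j A^{(k)}_{i,j} \, \Xi_k f_j = -\sum_i \ip{e_i}{x} \sum_j A^{(0)}_{i,j} f_j = -T_0 x, \]
using the first family. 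Subtracting the two identities yields $\sum_{k=1}^d [T_k, \Xi_k] = 0$.

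The only delicate point is interchanging the infinite sums over $i$, $j$ and $k$ in Step 3. I would handle this by doing the computation at the operator level instead. Write $\sum_k T_k \Xi_k = F \big( \sum_k \tilde A^{(k)} E \Xi_k \big)$ and identify $\sum_k \tilde A^{(k)} E \Xi_k$, under the unitary $\HS(\cJ, \cH) \cong \cH \otimes \overline{\cJ}$, with a vector built from $\hat A(\Xi)^* e = 0$. This makes all convergence automatic by continuity, and the argument for $\sum_k \Xi_k T_k$ is the same.
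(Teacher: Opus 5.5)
Your proof is correct and follows the paper's argument. The paper also writes $T_k$ as $S_f\,(A^{(k)})^{\mathrm{t}}\,S_e^*$, which is your $F\tilde A^{(k)}E$, with Hilbert--Schmidt outer factors to get $T_k \in \TC(\cJ)$, and then derives $T_0+\sum_k \Xi_k T_k=0$ from $\hat{A}(\Xi)f=0$ and $T_0+\sum_k T_k\Xi_k=0$ from $\hat{A}(\Xi)^*e=0$; your justification of the convergence and the interchange of sums supplies details the paper leaves implicit.
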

\begin{proof}
Fix $0 \leq k \leq d$ and set $A:= A^{(k)}$, $T := T_k$ as in the lemma statement. Choose any orthonormal basis, $(g_j)$ of $\cJ$ and define the unitary $U : \cJ \twoheadrightarrow \cH$ by $U g_j := h_j$. Further define $S_f, S_e : \cH \rightarrow \cJ$ by $S_f h_i := f_i$ and $S_e h_i := e_i$. Finally, set $L_f := S_f U$ and $L_e := S_e U$ so that $L_f, L_e : \cJ \rightarrow \cJ$. Then,
\ba \mr{tr} _\cJ (L_f ^* L_f) & = & \sum _i \ip{g_i}{L_f ^* L_f g_i}_{\cJ} = \sum _i \ip{f_i}{f_i}_{\cJ} \\
& = & \sum _i \| f_i \| ^2 _\cJ = \| f \| ^2 _{\cH \otimes \cJ} < +\infty, \ea so that $L_f$, and similarly $L_e$, are Hilbert--Schmidt operators on $\cJ$. In the above, $\mr{tr} _\cJ$ denotes the faithful trace over $\cJ$. Now observe that 
\ba L_f U^* A U L_e ^* & = & S_f A S_e ^*  \\
& = & S_f \circ \sum _{i}  \ip{h_i}{\cdot}_\cH h_i \circ A \circ \sum _j \ip{h_j}{\cdot}_\cH h_j \circ S_e ^*  \\ 
& = & \sum _{i,j} S_f h_i \ip{h_i}{A h_j} _\cH \ip{S_e h_j}{\cdot}_\cJ \\
& = & \sum _{i,j} A_{i,j} \ip{e_j}{\cdot}_\cJ f_i =: \wt{T}. \ea 
Since both $L_f, L_e$ are Hilbert--Schmidt, it follows that $\wt{T}$ is necessarily trace--class \cite[Theorem VI.22(h)]{RnS}. Moreover, the transpose of $\wt{T}$ with respect to the orthonormal basis $(h_i)$,
$$ \sum _{i,j} A^\mrt _{i,j} \ip{e_j}{\cdot}_\cJ f_i = \sum _{i,j} \ip{h_j}{A h_i}_\cH \ip{e_j}{\cdot}_\cJ f_i = T, $$ is then also trace-class.

Now $\hat{A} (\Xi) f =0$ means that 
$$ 0  =  \sum _j A^{(0)} _{i,j} f_j + \sum _{k=1} ^d \sum _j A^{(k)} _{i,j} \Xi _k f_j, $$ and hence
\ba 0 & = & \sum _{i,j} \ip{e_i}{\cdot}_\cJ A^{(0)} _{i,j} f_j + \sum _{k=1} ^d \sum _{i,j} \ip{e_i}{\cdot}_\cJ A^{(k)} _{i,j} \Xi _k f_j \\
& = & T_0 + \sum _{k=1} ^d  \Xi _k T_k. \ea 
Similarly, $\hat{A} (\Xi) ^* e =0$ implies that 
$$ T_0 + \sum _{k=1} ^d T_k \Xi _k =0, $$ and we conclude that 
$$ \sum _{k=1} ^d [T_k, \Xi _k] =0, $$ as desired. 
\end{proof}

\subsection{Gram matrix calculations} 

Let $(f_i) _{i=1} ^\infty \subseteq \cJ$ be such that $\sum _i \| f_i \| ^2 _\cJ < +\infty$, \emph{i.e.} $\sum _i h_i \otimes f_i \in \cH \otimes \cJ$ where $(h_i)$ is any fixed orthonormal basis of $\cH$. 

\begin{defn}
A square--summable sequence, $(f_i) _{i=1} ^\infty \subseteq \cJ$, $\sum \| f_i \| _\cJ ^2 < +\infty$, is \emph{linearly independent} if: Given any $\mbf{c} = (c_i) \in \ell ^2$, $\sum c_i f_i =0 \in \cJ$ implies that $\mbf{c} =0$. 
\end{defn}

Given a square--summable sequence, $\mbf{f} := (f_j) _{j=1} ^\infty \subseteq \cJ$,  let 
$$ f:= \sum _j h_j \otimes f_j \in \cH \otimes \cJ,  $$ where $(h_j) _{j=1} ^\infty$ is the fixed orthonormal basis of $\cH$.

\begin{lemma} \label{linindep} 
Let $A \in \scr{B} (\cH)$. If $\mbf{f} = (f_j) \subseteq \cJ$ is a square--summable sequence that is linearly independent, $f = \sum _j h_j \otimes f_j \in \cH \otimes \cJ$, and $A \otimes I_\cJ f =0$, then $A =0$. 
\end{lemma}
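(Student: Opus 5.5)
The plan is to expand $(A \otimes I_\cJ) f$ in the fixed orthonormal basis $(h_i)$ of $\cH$ and read off each coordinate as an $\ell^2$-linear combination of the $f_j$. Then linear independence, in the $\ell^2$ sense of the preceding definition, will force every matrix entry of $A$ to vanish.

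First, I will verify that $(A\otimes I_\cJ)f$ is well defined. The operator $A \otimes I_\cJ$ is bounded on $\cH \otimes \cJ$ with norm $\|A\|$, and the partial sums $\sum_{j\le N} h_j \otimes f_j$ converge to $f$. Hence
$$ (A \otimes I_\cJ) f = \lim_{N\to\infty} \sum_{j \le N} A h_j \otimes f_j . $$

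Next, for each fixed $i$, I will apply the bounded slice map $\ip{h_i}{\cdot}_\cH \otimes I_\cJ : \cH \otimes \cJ \to \cJ$, defined by $h\otimes g \mapsto \ip{h_i}{h} g$. It has norm at most one. This gives
$$ 0 = \lim_{N\to\infty} \sum_{j\le N} A_{i,j} f_j , \qquad A_{i,j} := \ip{h_i}{A h_j}_\cH. $$

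The key observation is that the row $\mbf{c}^{(i)} := (A_{i,j})_{j}$ lies in $\ell^2$. Indeed, $A_{i,j} = \ip{A^* h_i}{h_j}$, so by Parseval
$$ \sum_j |A_{i,j}|^2 = \|A^* h_i\|^2 < \infty. $$
The series $\sum_j A_{i,j} f_j$ also converges in $\cJ$ on its own: by Cauchy--Schwarz, $\sum_j |A_{i,j}|\,\|f_j\| \le \|A^*h_i\| \bigl(\sum_j \|f_j\|^2\bigr)^{1/2}$, so it converges absolutely. Therefore $\sum_j A_{i,j} f_j = 0$ with $\mbf{c}^{(i)} \in \ell^2$.

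Linear independence of $\mbf{f}$ then gives $A_{i,j} = 0$ for all $j$. Since $i$ was arbitrary, all matrix entries of $A$ vanish, and so $A = 0$.

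The only mildly delicate step is justifying that the slice map may be passed through the infinite sum and that the resulting coefficient sequence is in $\ell^2$ rather than merely bounded. This is exactly what the definition of linear independence requires, and Parseval applied to $A^*h_i$ handles it. There is no serious obstacle beyond this.
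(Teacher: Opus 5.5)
Your proof is correct and follows essentially the paper's argument: expand $(A\otimes I_{\cJ})f=\sum_i h_i\otimes\sum_j \langle h_i, A h_j\rangle f_j$ in the basis $(h_i)$, then apply linear independence of $(f_j)$ coordinate by coordinate to show that every matrix entry of $A$ vanishes. Your extra checks --- that the slice map $\langle h_i,\cdot\rangle\otimes I_{\cJ}$ passes through the limit, and that each coefficient row lies in $\ell^2$ with norm $\|A^*h_i\|$, as the definition of linear independence requires --- are details the paper leaves implicit, so your write-up is if anything more complete.
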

\begin{proof}
If $A \neq 0$ and $A \otimes I _\cJ f =0$, then, expanding $A$ in the orthonormal basis of $\cH$, 
$$ A = \sum _{j,k} \underbrace{\ip{h_j}{A h_k} _\cH}_{=: A_{j,k}} \ip{h_j}{\cdot}_\cH h_k. $$ 
Hence, 
\ba0 & = & A \otimes I_\cJ f  =  \sum _{i,j,k} A_{j,k} \ip{h_j}{h_i} h_k  \otimes f_i \\
& = & \sum _{j,k} A_{j,k} h_k \otimes f_j \\
& = & \sum _k h_k \otimes \sum _j A_{j,k} f_j. \ea
That is, for each fixed $k \in \N$, 
$$ \sum _j A_{j,k} f_j =0, $$ so that $A_{j,k} =0$ for all $j,k \in \N$, since $(f_i)$ is linearly independent. That is, $A =0$. 
\end{proof}

Given a square--summable sequence, $\mbf{f} = (f_j ) \subseteq \cJ$, or equvalently $f = \sum _j h_j \otimes f_j \in \cH \otimes \cJ$, we define the \emph{Gram matrix}, $G_{f} \in \scr{B} (\cH)$ of $f$ as the linear operator
$$ G_f = \sum _{i,j} \ip{f_i}{f_j} _{\cJ} \ip{h_j}{\cdot}_\cH h_i, $$ where $(h_j)$ is a fixed orthonormal basis of $\cH$. Note that $G_{\mbf{f}}$ is defined so that 
$$  (G _f)_{i,j} = \ip{f_i}{f_j}_\cJ. $$ We then define the operator 
$$ \Ga _f := G_f ^\mrt = \sum _{i,j} \ip{f_i}{f_j} _\cJ \ip{h_i}{\cdot}_\cH h_j, $$ which has matrix entries $(\Ga _f) _{i,j} = \ip{f_j}{f_i}_\cJ$, so that $\Ga _f$ is the transpose of $G_f$ with respect to the basis $(h_k)$. (That is, $\mrt$, denotes transpose.) Alternatively, $\ov{G}_f := C G_f C = \Ga _f$, where $\ov{G} _f$ denotes entry-wise complex conjugation of $G_f$. That is, we can define a conjugation of $\cH$, $C : \cH \rightarrow \cH$, by complex conjugation with respect to the standard basis of $\cH$. Namely, if $h = \sum c_j h_j \in \cH$, then $Ch := \sum \ov{c_j} h_j$ and $C$ is an anti-linear, isometric involution of $\cH$. We will call $\Ga _f$ the \emph{Gramian of $f$}. 

\begin{lemma}
Let $(f_j)$ be a square--summable sequence in $\cJ$. Then the Gramian,  $$\Ga _f := \sum _{i,j} \ip{f_i}{f_j}_\cJ \ip{h_i}{\cdot}_{\cH} h_j \in \TC (\cH),$$ and the Gram matrix, $G_f = \Ga _f ^\mrt$, are positive semi-definite trace--class operators. 
\end{lemma}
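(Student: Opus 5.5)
The plan is to realize the Gram matrix as a product $S^* S$ (or $S S^*$) with $S$ a Hilbert--Schmidt operator. Positivity and the trace-class property then follow at once, and the Gramian is handled as a conjugate of the Gram matrix.

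\textbf{Step 1: the Hilbert--Schmidt factor.} As in the proof of Lemma \ref{kercomm}, define $S_f : \cH \rightarrow \cJ$ by $S_f h_i := f_i$, extended linearly. For finitely supported $\sum c_i h_i$, Cauchy--Schwarz gives
$$ \Big\| \sum c_i f_i \Big\| \leq \Big( \sum |c_i|^2 \Big)^{1/2} \Big( \sum \|f_i\|^2 \Big)^{1/2}, $$
so $S_f$ extends to a bounded operator. Moreover $\sum_i \| S_f h_i \|^2 = \sum_i \|f_i\|^2 < +\infty$, so $S_f$ is Hilbert--Schmidt with $\|S_f\|_{\HS} = \|f\|_{\cH \otimes \cJ}$.

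\textbf{Step 2: identify the Gram matrix.} Compute the matrix entries of $S_f^* S_f$ in the basis $(h_k)$:
$$ \ip{h_i}{S_f^* S_f h_j}_\cH = \ip{f_i}{f_j}_\cJ = (G_f)_{i,j}. $$
The operator $G_f = \sum_{i,j} \ip{f_i}{f_j} \ip{h_j}{\cdot} h_i$ has exactly these entries, since $\ip{h_i}{G_f h_j} = \ip{f_i}{f_j}$. The series defining $G_f$ is a priori only formal, so I identify $G_f$ with $S_f^* S_f$ entry by entry. Both are bounded, since $S_f^*S_f$ is, and this is the only point needing care: the defining double series converges to the bounded operator $S_f^*S_f$ because its matrix coefficients agree on a basis. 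Consequently $G_f = S_f^* S_f \geq 0$. It is trace class, as a product of two Hilbert--Schmidt operators \cite[Theorem VI.22(h)]{RnS}, with $\mr{tr}\, G_f = \sum_i \|f_i\|^2$.

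\textbf{Step 3: transfer to the Gramian.} Let $C$ be the basis conjugation on $\cH$, which is an antiunitary involution. The text notes that $\Ga_f = C G_f C$; entrywise, $\ip{h_i}{C G_f C h_j} = \ov{\ip{h_i}{G_f h_j}} = \ov{\ip{f_i}{f_j}} = \ip{f_j}{f_i}$.

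For positivity, take any $h \in \cH$:
$$ \ip{h}{C G_f C h} = \ov{\ip{Ch}{G_f C h}} \geq 0. $$

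For the trace-class property, write $C G_f C = (C S_f^* \,\cdot)(S_f C)$. Alternatively, note that the singular values are preserved: $|C G_f C| = C |G_f| C$, so $\sum_i \ip{h_i}{C|G_f|C h_i} = \sum_i \ip{h_i}{|G_f| h_i} < \infty$, using $Ch_i = h_i$.

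Hence $\Ga_f$ is also a positive semi-definite trace-class operator, with the same trace as $G_f$.
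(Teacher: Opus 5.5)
Your proof is correct and is essentially the paper's argument. The factorization $G_f = S_f^* S_f$ is the operator form of the paper's identity $\ip{h}{G_f h}_{\cH} = \| \sum_i c_i f_i \|_{\cJ}^2$ for $h = \sum_i c_i h_i$, and in both versions the trace is $\sum_i \|f_i\|^2_{\cJ}$ and $\Ga_f = C G_f C$ is handled by conjugating with the basis conjugation.

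One small caveat: $C G_f C = (C S_f^*)(S_f C)$ is a product of anti-linear maps, so the Hilbert--Schmidt product theorem does not apply to it verbatim. Your alternative argument does settle the trace-class claim: $C G_f C$ is positive and $\sum_i \ip{h_i}{C G_f C h_i} = \sum_i \|f_i\|^2 < \infty$.
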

\begin{proof}
Clearly, given any $h = \sum _i c_i h_i \in \cH$,
\ba \ip{h}{\Ga _f ^\mrt  h} _{\cH} & = & \sum _{i,j} \ov{c} _i \ip{f_i}{f_j}_\cJ c_j  \\
& = & \ip{ \sum _i c_i f_i}{\sum _j c_j f_j}_\cJ = \ip{g}{g}_\cJ, \ea where $g:= \sum c_i f_i \in \cJ$ since 
$$ \| \sum c_i f_i \| \leq \sum _i |c_i | \| f_i \| _\cJ \leq \| h \| _\cH \sqrt{ \sum _i \| f_i \| ^2 _\cJ } < +\infty. $$
Hence, $\Ga _f ^\mrt = G_f$ is bounded and since $\ip{g}{g}_\cJ \geq 0$, it follows that $\Ga _f ^\mrt \geq 0$. Also,
$$ \mr{tr} \, \Ga _f ^\mrt = \sum _i \ip{f_i}{f_i} _\cJ < +\infty, $$ so that $\Ga _f ^\mrt \in \TC (\cH)$.
It follows that $\Ga _f = C G_f C$ is also a positive semi-definite trace--class operator. 
\end{proof}

Given a square--summable sequence, $\mbf{f} = (f_j) \subseteq \cJ$ and $f = \sum h_j \otimes f_j \in \cH \otimes \cJ$, let $\Ga _f$ be the Gramian of $\mbf{f}$ and set $P_f := P _{\nbran \Ga _f ^{-\| \cdot \|}} = P _{\nbker \Ga _f} ^\perp$. Then $\Ga _f = P_f \Ga _f P_f$, and we will call $P_f$ the \emph{support projection of $f$}.

\begin{lemma} \label{Gramlinindep}
Let $(f_i) \subseteq \cJ$ be square--summable. Then $(f_i)$ is a linearly independent sequence if and only if $\nbker \Ga _f = \{ 0 \}$, if and only if $\nbker G_f =\{ 0 \}$. 
\end{lemma}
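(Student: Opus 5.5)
The argument rests on the quadratic-form identity already used in the preceding lemma. Take any $h = \sum_i c_i h_i \in \cH$, so that $\mbf{c} = (c_i) \in \ell^2$. By that lemma,
$$ \ip{h}{G_f h}_\cH = \Big\| \sum_i c_i f_i \Big\|_\cJ^2 , $$
and the series $\sum_i c_i f_i$ converges absolutely in $\cJ$ by Cauchy--Schwarz, since $(f_i)$ is square-summable.

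\textbf{Step 1: $G_f$.} Because $G_f \geq 0$ is bounded, we have $\nbker G_f = \nbker G_f^{1/2}$. It follows that $G_f h = 0$ if and only if $\ip{h}{G_f h} = 0$, which by the identity above holds if and only if $\sum_i c_i f_i = 0$. Hence $\nbker G_f = \{0\}$ is equivalent to the statement that $\sum_i c_i f_i = 0$ with $\mbf{c} \in \ell^2$ forces $\mbf{c} = 0$. This is exactly linear independence of $(f_i)$ in the sense of the definition above.

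\textbf{Step 2: $\Ga_f$.} Recall that $\Ga_f = C G_f C$, where $C$ is the anti-linear isometric involution given by entrywise conjugation in the basis $(h_i)$. Then $\Ga_f h = 0$ if and only if $G_f (Ch) = 0$, because $C$ is injective. So $C$ restricts to a bijection $\nbker \Ga_f \to \nbker G_f$, and one kernel is trivial exactly when the other is.

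\textbf{Main obstacle.} The only delicate point is the positivity step $\ip{h}{G_f h} = 0 \Rightarrow G_f h = 0$, which requires Cauchy--Schwarz for the positive form or the square root $G_f^{1/2}$. The convergence of $\sum_i c_i f_i$ is already handled in the previous lemma, so no further analytic work is needed.
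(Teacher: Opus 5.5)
Your proof is correct and takes essentially the same route as the paper: both relate $G_f h$ to $g=\sum_k c_k f_k$, then pass to $\Ga_f$ through the conjugation $C$. The only difference is cosmetic: the paper writes $G_f h=\sum_i \ip{f_i}{g}_\cJ h_i$ and concludes $g=0$ from $g\perp f_i$ for all $i$ (implicitly using that $g$ lies in the closed span of the $f_i$), whereas you use the quadratic form $\ip{h}{G_f h}_\cH=\|g\|_\cJ^2$ together with positivity of $G_f$.
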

\begin{proof}
Given the Gram matrix, $G_f = \sum _{i,j} \ip{f_i}{f_j}_{\cJ} \ip{h_j}{\cdot}_{\cH} h_i \in \TC (\cH)$, of the square--summable sequence, $\mbf{f} := (f_i)$, we have that 
$$ h = \sum _k c_k h_k \in \nbker G_f, $$ if and only if 
$$ 0  =  \sum _{i,k} \ip{f_i}{f_k} c_k h_i = \sum _i \ip{f_i}{\underbrace{\sum _k c_k f_k}_{=:g \in \cJ}}_\cJ h_i, $$ if and only if 
$$ 0 = \ip{f_i}{\sum _k c_k f_k}_\cJ, $$ for all $i \in \N$, if and only if $g = \sum _k c_k f_k =0$. Since $G_f = C \Ga _f C \geq 0$, it follows that $\nbker \Ga _f = \{ 0 \}$ if and only if $\nbker G_f = \{ 0 \}$. Namely, $h \in \nbker \Ga _f$ if and only if $\ov{h} := C h \in \nbker G_f$. 
\end{proof} 

\begin{lemma} \label{Gramptrace}
Given $f = \sum _i h_i \otimes f_i \in \cH \otimes \cJ$, where $\mbf{f} = (f_j) \subseteq \cJ$ is a square--summable sequence, 
$$ \Ga _f = \mr{id} _\cH \otimes \mr{tr} _\cJ \circ \ip{f}{\cdot}_{\cH \otimes \cJ} f. $$
\end{lemma}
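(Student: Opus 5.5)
We verify the identity by evaluating both sides on the fixed orthonormal basis $(h_k)$ of $\cH$ and comparing matrix entries. First we pin down the meaning of the right-hand side. The rank-one operator $\ip{f}{\cdot}_{\cH \otimes \cJ} f$ is a positive trace-class operator on $\cH \otimes \cJ$. The partial trace $\mr{id}_\cH \otimes \mr{tr}_\cJ$ is the unique bounded linear map $\TC(\cH \otimes \cJ) \to \TC(\cH)$ satisfying
$$ \mr{id}_\cH \otimes \mr{tr}_\cJ (R \otimes S) = \mr{tr}_\cJ(S)\, R \quad \text{for } R \in \TC(\cH),\ S \in \TC(\cJ). $$
Equivalently, it is characterized by
$$ \ip{h}{(\mr{id}_\cH \otimes \mr{tr}_\cJ)(X) h'}_\cH = \sum_m \ip{h \otimes g_m}{X (h' \otimes g_m)}_{\cH\otimes\cJ}, $$
where $(g_m)$ is any orthonormal basis of $\cJ$. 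We will use this second form.

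Next we expand the rank-one operator in elementary tensors. Since $f = \sum_i h_i \otimes f_i$ converges in norm by square-summability,
$$ \ip{f}{\cdot}\, f = \sum_{i,j} \ip{h_i}{\cdot}_\cH h_j \otimes \ip{f_i}{\cdot}_\cJ f_j . $$
This double sum converges in trace norm, because it is the product of the norm-convergent expansions of $f$ and $f^*$. Applying the partial trace termwise, using $\mr{tr}_\cJ(\ip{f_i}{\cdot}_\cJ f_j) = \ip{f_i}{f_j}_\cJ$ and the continuity of the partial trace in trace norm, we obtain
$$ \sum_{i,j} \ip{f_i}{f_j}_\cJ \ip{h_i}{\cdot}_\cH h_j , $$
which is exactly the defining formula for $\Ga_f$.

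As a check on the index order, we compute the $(k,l)$ matrix entry directly from the characterization:
$$ \sum_m \ip{h_k \otimes g_m}{f}\ip{f}{h_l \otimes g_m} = \sum_m \ip{g_m}{f_k}_\cJ \ip{f_l}{g_m}_\cJ = \ip{f_l}{f_k}_\cJ, $$
by Parseval. This agrees with $(\Ga_f)_{k,l} = \ip{f_l}{f_k}_\cJ$ as stated in the definition of the Gramian.

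The only real obstacle is analytic: interchanging the infinite sums with the partial trace. The matrix-entry computation avoids this entirely, since it needs only Parseval and the norm convergence of $f$. Two bounded operators on $\cH$ with equal matrix entries in $(h_k)$ coincide, so the identity follows from that computation alone.
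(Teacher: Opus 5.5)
Your proof is correct and follows the paper's argument: expand $\ip{f}{\cdot}f$ into elementary tensors $\ip{h_i}{\cdot}_\cH h_j \otimes \ip{f_i}{\cdot}_\cJ f_j$, compute the partial trace using an orthonormal basis $(g_m)$ of $\cJ$, and collapse the sum with Parseval to obtain $\ip{f_i}{f_j}_\cJ$. Your matrix-entry computation is a useful addition, because it justifies interchanging the partial trace with the infinite double sum, a step the paper performs without comment.
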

In the lemma statement above, $\mr{id} _\cH : \scr{B} (\cH) \rightarrow \scr{B} (\cH)$ denotes the linear identity map on $\scr{B} (\cH)$ and $\mr{tr}_\cJ$ denotes the semi-finite trace over $\cJ$, so that $\mr{id} _\cH \otimes \mr{tr} _\cJ : \TC (\cH \otimes \cJ) \rightarrow \TC (\cH)$ is the \emph{partial trace}. 
\begin{proof}
Let $(g_i)$ be an orthonormal basis of $\cJ$. Then, 
\ba \mr{id} _\cH \otimes \mr{tr} _\cJ \circ \ip{f}{\cdot}_{\cH \otimes \cJ} f & = & 
\mr{id} _\cH \otimes \mr{tr} _\cJ \circ \sum _{j,k} \ip{h_j \otimes f_j}{\cdot}_{\cH \otimes \cJ} h_k \otimes f_k \\
& = & \mr{id} _\cH \otimes \sum _i \ip{g_i}{ (\cdot ) g_i}_\cJ \circ \sum _{j,k} \ip{h_j}{\cdot}_\cH h_k  \otimes \ip{f_j}{\cdot}_\cJ f_k \\
& = & \sum _{j,k} \ip{h_j}{\cdot}_\cH h_k \sum _i \ip{g_i}{f_k} _\cJ \ip{f_j}{g_i}_\cJ \\
& = & \sum _{j,k} \ip{f_j}{f_k}_\cJ \ip{h_j}{\cdot}_\cH h_k = \Ga _f. \ea 
\end{proof}

\begin{lemma} \label{Gramker}
Let $\mbf{f} = (f_i) \subseteq \cJ$ be a square--summable sequence. 
A vector, $h = \sum c_j h_j \in \cH$, belongs to the kernel of the Gramian, $\Ga _f$, of $f = \sum h_i \otimes f_i$, $h \in \nbker \Ga _f $ if and only if $h \otimes g$ is orthogonal to $f$ for every $g \in \cJ$.
\end{lemma}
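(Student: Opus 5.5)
Since $\Ga _f$ is positive semi-definite, $h \in \nbker \Ga _f$ holds exactly when $\ip{h}{\Ga _f h}_\cH = 0$. The plan is therefore to compute this quadratic form and compare it with the inner products $\ip{h \otimes g}{f}_{\cH \otimes \cJ}$.

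\textbf{Step 1: the quadratic form.} For $h = \sum _j c_j h_j$, the definition $\Ga _f = \sum _{i,j} \ip{f_i}{f_j}_\cJ \ip{h_i}{\cdot}_\cH h_j$ gives
$$ \ip{h}{\Ga _f h}_\cH = \sum _{i,j} c_i \ov{c_j} \ip{f_i}{f_j}_\cJ = \Big\| \sum _j \ov{c_j} f_j \Big\| _\cJ ^2 . $$
Set $\tilde g := \sum _j \ov{c_j} f_j$. This series converges in $\cJ$ by the Cauchy--Schwarz estimate used in the preceding lemma, so the rearrangement is justified. Equivalently, $\tilde g = \sum _j c_j' f_j$ with $Ch = \sum _j c_j' h_j$, which matches the identity $\Ga _f = C G_f C$ from Lemma \ref{Gramlinindep}. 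Hence $h \in \nbker \Ga _f$ if and only if $\tilde g = 0$.

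\textbf{Step 2: pairing with elementary tensors.} For any $g \in \cJ$, orthonormality of $(h_i)$ gives
$$ \ip{h \otimes g}{f}_{\cH \otimes \cJ} = \sum _j \ip{h}{h_j}_\cH \ip{g}{f_j}_\cJ = \sum _j \ov{c_j} \ip{g}{f_j}_\cJ = \ip{g}{\tilde g}_\cJ , $$
using the convention that the inner product is conjugate-linear in the first slot. Interchanging the sum and the inner product is legitimate because the series defining $\tilde g$ converges in norm.

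\textbf{Step 3: conclusion.} Combining the two steps, $h \otimes g \perp f$ for every $g \in \cJ$ if and only if $\ip{g}{\tilde g}_\cJ = 0$ for all $g \in \cJ$. This holds if and only if $\tilde g = 0$, which by Step 1 holds if and only if $h \in \nbker \Ga _f$.

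The only delicate point is bookkeeping of the conjugates, namely making sure that the transpose in $\Ga _f$ rather than $G_f$ produces exactly $\ov{c_j}$, matching the conjugation arising from $\ip{h}{h_j}_\cH$. As a consistency check, Lemma \ref{Gramptrace} gives
$$ \ip{h}{\Ga _f h}_\cH = \sum _k \big| \ip{h \otimes g_k}{f}_{\cH \otimes \cJ} \big| ^2 $$
for an orthonormal basis $(g_k)$ of $\cJ$. This yields the same equivalence directly.
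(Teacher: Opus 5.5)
Your proof is correct and is essentially the paper's argument. The paper's forward direction is exactly your Step 2 identity $\ip{h \otimes g}{f}_{\cH \otimes \cJ} = \ip{g}{\sum_j \ov{c_j} f_j}_\cJ$, combined with Lemma~\ref{Gramlinindep} in the role of your Step 1; for the converse it uses the partial-trace formula of Lemma~\ref{Gramptrace} together with positivity of $\Ga_f$, which is precisely your closing consistency check, whereas your coordinate computation $\ip{h}{\Ga_f h}_\cH = \|\sum_j \ov{c_j} f_j\|_\cJ^2$ delivers both directions from the same two identities at once.
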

\begin{proof}
By Lemma \ref{Gramlinindep}, $C h = \sum \ov{c} _i h_i \in \nbker G_f$, so that $\sum \ov{c_i} f_i =0$. Hence, 
\ba \ip{h \otimes g}{f}_{\cH \otimes \cJ} & = & \sum _{i,j} \ov{c_i} \ip{h_i}{h_j}_\cH \ip{g}{f_j}_\cJ \\
& = & \sum _j \ov{c_j} \ip{g}{f_j} = \ip{g}{\sum _j \ov{c_j} f_j} =0. \ea 

Alternatively, if $h \in \nbker \Ga _f$, 
\ba 0 & = & \ip{h}{\Ga _f h}_\cH = \ip{h}{\mr{id} _\cH \otimes \tr _\cJ \circ (ff^*) h} \\
& = & \tr _\cJ \left( h^* \otimes I_\cJ \,  ff^* \, h \otimes I_\cJ \right). \ea 
Hence,
$$\| h^* \otimes I_\cJ ff^* h \otimes I_\cJ \| _{\TC (\cJ)} =0, $$ which implies that 
$$ 0 = h^* \otimes I_\cJ ff^* h \otimes I_\cJ \geq 0 \in \scr{B} (\cJ), $$ and hence 
$h^* \otimes I_\cJ f =0$ and $h^* \otimes g^* f =0$ for every $g \in \cJ$.

Conversely, if $h \otimes g \perp f$ for every $g \in \cJ$, then $h ^* \otimes I _\cJ : \cH \otimes \cJ \rightarrow \cJ$ obeys $f \in \nbker h^* \otimes I_\cJ$. Hence, by the previous Lemma \ref{Gramptrace}, 
\ba \ip{h}{\Ga _f h}_\cH & = & \ip{h}{\mr{id} _\cH \otimes \tr _\cJ \circ (ff^*) h} \\
& = & \tr _\cJ \left( h^* \otimes I_\cJ ff^* h \otimes I_\cJ \right) =0, \ea as $f \in \nbker h^* \otimes I_\cJ$ by assumption. Hence, since $\Ga _f \geq 0$, it follows that $\Ga _f h =0$ and $h \in \nbker \Ga _f$. 
\end{proof}

\begin{lemma} \label{Gramker2}
Given a square--summable sequence, $\mbf{f}= (f_j ) \subseteq \cJ$ and $f = \sum h_j \otimes f_j \in \cH \otimes \cJ$ with support projection, $P_f$, a linear operator $A \in \scr{B} (\cH, \cH ')$ obeys $A P_f =0$ if and only if $A \otimes I_\cJ f =0$. 
\end{lemma}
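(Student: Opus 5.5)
The plan is to reduce both directions to the characterization of $\nbker \Ga _f$ in Lemma \ref{Gramker}, together with the identity $\Ga _f = \mr{id} _\cH \otimes \tr _\cJ (ff^*)$ from Lemma \ref{Gramptrace}. Since $P_f$ is the projection onto $(\nbker \Ga _f)^\perp$, the condition $AP_f = 0$ says exactly that $A$ vanishes on $\nbran \Ga _f^{-\| \cdot \|}$. Equivalently, $\nbran A^* \subseteq \nbker \Ga _f$. So the whole statement amounts to: $\nbran A^* \subseteq \nbker \Ga _f$ if and only if $A \otimes I_\cJ f = 0$.

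\emph{($\Rightarrow$)} Suppose $AP_f = 0$. For any $h' \in \cH'$ and $g \in \cJ$, compute
$$\ip{h' \otimes g}{A \otimes I_\cJ f} = \ip{A^* h' \otimes g}{f}.$$
Because $A^* h' \in \nbran A^* \subseteq \nbker \Ga _f$, Lemma \ref{Gramker} shows $A^*h' \otimes g \perp f$, so this inner product is zero. Elementary tensors $h' \otimes g$ span a dense subspace of $\cH' \otimes \cJ$. Hence $A \otimes I_\cJ f = 0$.

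\emph{($\Leftarrow$)} Suppose $A \otimes I_\cJ f = 0$. Reading the same identity backwards gives $\ip{A^*h' \otimes g}{f} = 0$ for all $h' \in \cH'$ and $g \in \cJ$. By Lemma \ref{Gramker}, $A^* h' \in \nbker \Ga _f$ for every $h'$. Thus $\nbran A^* \subseteq \nbker \Ga _f = \nbran (I - P_f)$, which gives $P_f A^* = 0$, and taking adjoints, $AP_f = 0$.

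As a cross-check, and an alternative route to ($\Leftarrow$), one can compute directly:
$$\mr{id} \otimes \tr _\cJ \big( (A \otimes I)ff^*(A^* \otimes I) \big) = A\,\Ga _f\,A^*.$$
The left side is the partial trace of a positive operator, which vanishes if and only if that operator vanishes. So $A \otimes I_\cJ f = 0$ if and only if $A \Ga _f A^* = 0$, i.e.\ $A \Ga _f^{1/2} = 0$, i.e.\ $AP_f = 0$. This route needs care: it uses that the partial trace intertwines the action of $A \otimes I$ on both sides, and that the support of $\Ga _f^{1/2}$ equals the range of $P_f$.

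The only step I expect to need care is the use of Lemma \ref{Gramker} for vectors $A^*h'$ that are not finite combinations of basis vectors. That lemma is stated for arbitrary $h = \sum c_j h_j \in \cH$, so it applies directly. The density argument is routine, so I foresee no real obstacle.
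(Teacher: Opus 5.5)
Your proof is correct, but your main argument is not the one in the paper. The paper's proof is essentially the ``cross-check'' you give at the end.

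The paper notes $AP_f=0 \iff A\Ga_f=0 \iff A\Ga_fA^*=0$. It then writes $A\Ga_fA^*=\mr{id}_{\cH'}\otimes\tr_\cJ\bigl((A\otimes I_\cJ)ff^*(A^*\otimes I_\cJ)\bigr)$ using Lemma \ref{Gramptrace}. Since the partial trace is faithful on positive trace-class operators, this vanishes if and only if $(A\otimes I_\cJ)f=0$.

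Your primary route dualizes instead. You recast $AP_f=0$ as $\nbran A^*\subseteq\nbker\Ga_f$ and apply Lemma \ref{Gramker} to each vector $A^*h'$. You combine this with the identity $\ip{h'\otimes g}{(A\otimes I_\cJ)f}=\ip{A^*h'\otimes g}{f}$ and the density of elementary tensors in $\cH'\otimes\cJ$.

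Your version makes Lemma \ref{Gramker2} a direct corollary of Lemma \ref{Gramker}. It needs neither the intertwining property nor the faithfulness of the partial trace. Indeed, both directions of Lemma \ref{Gramker} already follow from the elementary identity $\ip{h\otimes g}{f}=\ip{g}{\sum_j\ov{c_j}f_j}$ for $h=\sum_j c_jh_j$. In exchange, the paper's version treats $A$ globally in a single step and needs no density argument.
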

\begin{proof}
Given $A \in \scr{B} (\cH, \cH')$, $A P_f =0$ if and only if $A \Ga _f =0$, or equivalently $A \Ga _f A^* =0$. Hence, 
\ba A \Ga _f A^* & = & A (\mr{id} _\cH \otimes \mr{tr} _\cJ \circ ff^*) A^* \\
& = & \mr{id} _{\cH'} \otimes \mr{tr} _{\cJ} \circ (A \otimes I_\cJ f f^* A^* \otimes I_\cJ), \ea and since $\mr{id}_{\cH'} \otimes \mr{tr} _\cJ$ is faithful, it follows that $A \Ga _f A^* =0$ if and only if 
$$ 0 = A \otimes I_\cJ f f^* A^* \otimes I_\cJ, $$ or equivalently, if and only if $A \otimes I_\cJ f =0$.

\end{proof}

\begin{cor} \label{supportproj}
Given any square--summable sequence $\mbf{f} = (f_j) \subseteq \cJ$ and $f = \sum h_i \otimes f_i \in \cH \otimes \cJ$, we have that  $P_f \otimes I_\cJ f = f$, where $P_f = P_{\nbker \Ga _f} ^\perp$ is the support projection of $f$.  
\end{cor}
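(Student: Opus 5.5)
The plan is to apply Lemma \ref{Gramker2} with $A := I_\cH - P_f$, where $P_f$ is the support projection of $f$, so everything reduces to checking the hypothesis of that lemma and reading off its conclusion.

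\emph{Hypothesis.} Since $P_f$ is an orthogonal projection, $(I_\cH - P_f) P_f = P_f - P_f^2 = 0$.

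\emph{Conclusion.} Lemma \ref{Gramker2}, with $\cH' = \cH$, then gives $(I_\cH - P_f) \otimes I_\cJ \, f = 0$. Rearranging, $P_f \otimes I_\cJ \, f = f$, which is the claim.

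The only point needing care is that Lemma \ref{Gramker2} is stated for arbitrary bounded $A \in \scr{B}(\cH,\cH')$, so it applies to $A = I_\cH - P_f$ without further verification.

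Alternatively, one can argue directly from Lemma \ref{Gramker}. For any $h \in \nbker \Ga_f = \nbran (I - P_f)$ and any $g \in \cJ$, that lemma gives $h \otimes g \perp f$. Elementary tensors $h \otimes g$ with $h$ ranging over $\nbran(I-P_f)$ span a dense subspace of $\nbran(I-P_f) \otimes \cJ$. Hence $f$ is orthogonal to $\nbran\bigl((I-P_f)\otimes I_\cJ\bigr)$, which again gives $(I-P_f)\otimes I_\cJ\, f = 0$.

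I expect no real obstacle here. The result is a direct corollary, and the only mild subtlety, faithfulness of the partial trace, is already handled inside the proof of Lemma \ref{Gramker2}.
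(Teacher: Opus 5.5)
Your proof is correct. Your ``alternative'' argument is essentially the paper's own proof. The paper writes an arbitrary $x = \sum_i h_i \otimes f_i'$ and notes that $(I-P_f)\otimes I_\cJ\, x = \sum_i (I-P_f)h_i \otimes f_i'$ with each $(I-P_f)h_i \in \nbker \Ga_f$. It then uses the orthogonality characterization of Lemma \ref{Gramker} to get $f \perp \nbran\bigl((I-P_f)\otimes I_\cJ\bigr)$, and concludes $f \in \nbran (P_f \otimes I_\cJ)$.

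Your main route instead plugs $A = I_\cH - P_f$ directly into Lemma \ref{Gramker2}. This is legitimate and not circular. That lemma's proof uses only Lemma \ref{Gramptrace}, positivity of $\Ga_f$, and faithfulness of the partial trace, not the corollary. It is also shorter, because the orthogonality-plus-density step is absorbed into the faithfulness of the partial trace.

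The two routes are equivalent in substance. Applying Lemma \ref{Gramker2} to the functional $A = h^*$ with $h \in \nbker \Ga_f$ recovers exactly the orthogonality statement $h \otimes g \perp f$ for all $g \in \cJ$ that the paper's argument relies on.
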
 
\begin{proof}
Given any $x = \sum h_i  \otimes f_i ' \in \cH \otimes \cJ$, 
$$ (I-P_f) \otimes I_\cJ x = \sum (I-P_f) h_i \otimes f_i '. $$ Since each $(I-P_f) h_i \in \nbker \Ga _f$, it follows from the previous lemma that $(I-P_f) h_i  \otimes f_i' \perp f$ for every $i$. Hence $x \perp f$. This shows that $f \perp \nbran (I-P_f) \otimes I_\cJ$ so that $f \in \nbran P_f \otimes I_\cJ$.  
\end{proof}

Given a square--summable sequence, $\mbf{f}=(f_i) \subseteq \cJ$, let $\Ga _f \geq 0$, $\Ga_f \in \TC (\cH)$ be the Gramian of $f = \sum _j h_j \otimes f_j$.  If $(h ' _i)$ is any fixed orthonormal basis of $\nbran P_f \subseteq \cH$, where recall that $P_f$ is the projection onto $\nbker \Ga _f ^\perp$, the support projection of $f$, then observe that 
\ba \Ga _f & = & P_f \Ga _f P_f  \\
& = & \mr{id} _\cH \otimes \tr _\cJ \circ \left( P_f \otimes I_\cJ ff ^* P_f \otimes I_\cJ \right) \\
& = & \mr{id} _\cH \otimes \tr _\cJ \circ \sum _{i,j,k,\ell} h_i ' h_i ^{'*} h_j h_k^* h_{\ell} ' h_\ell ^{'*} \otimes f_j f_k ^* \\
& = & \sum _{i,j,k,\ell} h_i ' h_i ^{'*} h_j h_k^* h_{\ell} ' h_\ell ^{'*} f_k^* f_j  h_i ' h_{\ell} ^{' *} \\
& = & \sum _{i, \ell} \ip{\sum _k \ip{h_\ell '}{h_k}_\cH f_k }{\sum _j \ip{h_i '}{h_j} f_j}_\cJ h_i ' h_\ell ^{'*} \\
& =: & \sum _{i,\ell} \ip{f_\ell '}{f_i '}_\cJ h_i ' h_\ell ^{'*}. \ea 
Hence, restricting $\Ga _f$ to $\cH' := \nbran P_f$, we obtain
$\Ga _f | _{\cH'} : \cH ' \rightarrow \cH'$, obeys $\Ga _f | _{\cH '} = \Ga _{f'}$, where $\Ga _{f'}$ is the Gram matrix of $f = f' := \sum _i h_i ' \otimes f_i ' \in \cH' \otimes \cJ$, with respect to the orthonormal basis, $(h ' _i )$, of $\cH ' = \nbran P_f$, and 
$$ f_i ' = \sum _j \ip{h_i '}{h_j} f_j \in \cJ, $$ form a square--summable and linearly independent sequence in $\cJ$ since $\Ga _{f'} = \Ga _f | _{\cH'} > 0$ is injective. 
In summary, we have proven the following lemma:
\begin{lemma} \label{liconstruct}
Let $\mbf{f} = (f_j) \subseteq \cJ$ be a square--summable sequence, and let $\Ga _f \in \TC (\cH)$, $\Ga _f \geq 0$ be the Gramian of $f = \sum h_i \otimes f_i$ with respect to a fixed orthonormal basis, $(h_j)$, of $\cH$. If $(h' _j )$ is an orthonormal basis of $\cH' := \nbran P_f \subseteq \cH$, then 
$$ \mbf{f} ' := ( f' _j) \subseteq \cJ; \quad \quad f_i ' = \sum _j \ip{h_i '}{h_j} f_j \in \cJ, $$ is a linearly independent and square--summable sequence so that $f = f' = \sum h_i ' \otimes f_i '$.
\end{lemma}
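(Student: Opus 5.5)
The plan is to obtain all three claims (the identity $f = \sum h_i' \otimes f_i'$, square-summability, and linear independence) from Corollary \ref{supportproj} and Lemma \ref{Gramker2}. I will not rely on the explicit restriction-of-Gramian computation preceding the statement.

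\emph{Step 1: the identity $f=\sum_i h_i'\otimes f_i'$.} By Corollary \ref{supportproj}, $f = (P_f\otimes I_\cJ) f$. Write $P_f = \sum_i h_i' h_i^{'*}$, which converges strongly. Then
$$ f = \sum_i (h_i' h_i^{'*} \otimes I_\cJ) f = \sum_i h_i' \otimes (h_i^{'*}\otimes I_\cJ) f , $$
and the series converges in $\cH\otimes\cJ$. For each $i$ we compute
$$ (h_i^{'*}\otimes I_\cJ) f = \sum_j \ip{h_i'}{h_j}_\cH f_j = f_i' . $$
This series converges in $\cJ$ by Cauchy--Schwarz, since $(f_j)$ is square-summable. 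Hence $f = \sum_i h_i'\otimes f_i'$.

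\emph{Step 2: square-summability.} Since the $h_i'$ are orthonormal, Step 1 gives $\sum_i \|f_i'\|_\cJ^2 = \|f\|^2_{\cH\otimes\cJ} < +\infty$.

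\emph{Step 3: linear independence.} Suppose $\mbf{c}=(c_i)\in\ell^2$ satisfies $\sum_i c_i f_i' = 0$. Put $h := \sum_i \ov{c_i}\, h_i' \in \cH' = \nbran P_f$. Then $h^* h_i' = \ip{h}{h_i'}_\cH = c_i$, so by Step 1
$$ (h^*\otimes I_\cJ) f = \sum_i c_i f_i' = 0 . $$
Therefore the rank-one operator $A := h h^* \in \scr{B}(\cH)$ satisfies $(A\otimes I_\cJ) f = 0$. Lemma \ref{Gramker2} then gives $A P_f = 0$. Since $h\in\nbran P_f$, we have $0 = A P_f h = \|h\|^2 h$, so $h=0$ and hence $\mbf{c}=0$. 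Alternatively, Lemma \ref{Gramker} gives $h\in\nbker\Ga_f$, which together with $h \in \nbker \Ga_f^\perp$ forces $h=0$.

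\emph{Expected difficulty.} The only delicate point is bookkeeping: keeping track of which slot of the inner product is conjugate-linear. This is why $h$ is built from $\ov{c_i}$ rather than $c_i$. The other thing to check is that the interchanges of infinite sums are justified, which follows from the norm convergence established in Step 1. Beyond this I expect no real obstacle, since Lemma \ref{Gramker2} already carries the analytic content, namely the faithfulness of the partial trace.
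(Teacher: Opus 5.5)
Your proof is correct, but you reach linear independence by a different route from the paper.

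\textbf{The paper's route.} The paper's argument is the computation displayed just before the statement. It expands $\Ga_f = P_f \Ga_f P_f$ through the partial-trace formula of Lemma \ref{Gramptrace} in the basis $(h_i')$. The matrix entries of $\Ga_f|_{\cH'}$ turn out to be $\ip{f_\ell'}{f_i'}_\cJ$, so $\Ga_f|_{\cH'} = \Ga_{f'}$ is the Gramian of the new sequence. Since $\cH' = (\nbker \Ga_f)^\perp$, this compression is injective, and Lemma \ref{Gramlinindep}, applied on $\cH'$, gives linear independence. Square-summability and the identity $f = \sum_i h_i' \otimes f_i'$ are left largely tacit there.

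\textbf{Your route.} You never form $\Ga_{f'}$. Instead you pull a relation $\sum_i c_i f_i' = 0$ back to a vector $h \in \cH'$ with $(h^*\otimes I_\cJ) f = 0$. You then apply Lemma \ref{Gramker2} (or Lemma \ref{Gramker}) to the original $f$, which gives $h \in \nbker \Ga_f \cap \cH' = \{0\}$.

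\textbf{Comparison.} Both arguments rest on the same fact, the faithfulness of the partial trace, i.e.\ $\nbker \Ga_f = \{ h : (h^* \otimes I_\cJ) f = 0 \}$.
\begin{itemize}
\item Your version is shorter and avoids the index-heavy expansion.
\item It also makes explicit the two points the paper leaves implicit: $f = \sum_i h_i'\otimes f_i'$, via Corollary \ref{supportproj}, and $\sum_i \|f_i'\|^2 = \|f\|^2$.
\item The paper's computation, in exchange, records the additional structural fact $\Ga_{f'} = \Ga_f|_{\cH'}$. In words, re-expanding $f$ in an orthonormal basis of the support of $\Ga_f$ compresses the Gramian to an injective one.
\end{itemize}

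As a minor simplification, you could apply Lemma \ref{Gramker2} directly to $A = h^* \in \scr{B}(\cH, \C)$ rather than to $hh^*$. That lemma allows an arbitrary target space, and $h^* P_f = 0$ with $h \in \nbran P_f$ gives $\|h\|^2 = 0$ at once.
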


This next theorem relates singularity of a linear pencil evaluated at $\Xi \in \mf{M} ^{1\times d}$ to the (analytic) relation co-dimension, $\Delta _1 (\Xi)$.

\begin{thm} \label{zerocomp}
Let $A \in \scr{B} (\cH) ^{d+1}$ and let $\Xi \in \mf{M} ^{1 \times d}$ have maximal analytic relation co-dimension, $\Delta_1 (\Xi ) =d$. If there exist non-zero $f,g \in \cH \otimes \cJ$ so that $\hat{A} (\Xi ) f = 0 = \hat{A} (\Xi ) ^* g$, then $0 = P_g A^{(k)} P_f$ for all $0 \leq k \leq d$, or equivalently, $0 = \Ga _g A^{(k)} \Ga _f$.
\end{thm}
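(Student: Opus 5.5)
The plan is to combine Lemma \ref{kercomm} with the maximality of $\Delta_1$ to force certain trace-class operators to vanish. The linear-independence machinery of the Gram matrix subsection then converts this vanishing into $P_g A^{(k)} P_f = 0$.

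\emph{Step 1: the $T_k$ vanish.} Put $e := g$ and write $f = \sum_j h_j \otimes f_j$, $g = \sum_i h_i \otimes g_i$. Lemma \ref{kercomm} gives trace-class operators $T_k = \sum_{i,j} A^{(k)}_{i,j} \ip{g_i}{\cdot}_\cJ f_j$, $0 \le k \le d$, with $\sum_{k=1}^d [T_k, \Xi_k] = 0$. Its proof also gives $T_0 + \sum_k \Xi_k T_k = 0$.
\begin{itemize}
\item Since $\Delta_1(\Xi) = d$, the Hilbert--Schmidt closure of the trace-class solutions has Murray--von Neumann dimension $0$ over $\mf{M} \ov{\otimes} \mf{M}^{\mr{op}}$.
\item The trace on $\mf{M} \ov{\otimes} \mf{M}^{\mr{op}}$ is faithful, so a closed submodule of dimension $0$ is $\{0\}$.
\item Hence $T_1 = \cdots = T_d = 0$, and then $T_0 = -\sum_k \Xi_k T_k = 0$.
\end{itemize}

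\emph{Step 2: pass to linearly independent sequences.} Apply Lemma \ref{liconstruct} to $f$ and to $g$. Choose orthonormal bases $(h'_j)$ of $\nbran P_f$ and $(h''_i)$ of $\nbran P_g$. This rewrites $f = \sum_j h'_j \otimes f'_j$ and $g = \sum_i h''_i \otimes g'_i$, where $(f'_j)$ and $(g'_i)$ are square-summable and linearly independent. Corollary \ref{supportproj} gives $f = P_f \otimes I_\cJ f$ and $g = P_g \otimes I_\cJ g$. Since $T_k$ is basis-independent (it equals $S_f A S_g^*$ up to the transpose, as in the proof of Lemma \ref{kercomm}), substituting these expansions should give
$$ T_k = \sum_{i,j} a^{(k)}_{i,j} \ip{g'_i}{\cdot}_\cJ f'_j, \qquad a^{(k)}_{i,j} = \ip{h''_i}{A^{(k)} h'_j}_\cH, $$
possibly up to a transpose or complex conjugation. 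These $a^{(k)}_{i,j}$ are the matrix entries of $P_g A^{(k)} P_f$ viewed as a map $\nbran P_f \to \nbran P_g$.

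\emph{Step 3: coefficients vanish.} Fix $x \in \cJ$. Then $0 = T_k x = \sum_j c_j(x) f'_j$, where $c_j(x) = \sum_i a^{(k)}_{i,j} \ip{g'_i}{x}$. This coefficient sequence is in $\ell^2$ because $a^{(k)}$ is bounded and $(g'_i)$ is square-summable. Linear independence of $(f'_j)$ gives $c_j(x) = 0$ for all $j$ and all $x$. Thus $\sum_i \ov{a^{(k)}_{i,j}} g'_i \perp \cJ$, i.e.\ it is zero. Linear independence of $(g'_i)$, with the row in $\ell^2$, then gives $a^{(k)}_{i,j} = 0$, so $P_g A^{(k)} P_f = 0$.

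\emph{Step 4: the $\Ga$ form.} $\Ga_f = P_f \Ga_f P_f$, and $\nbran \Ga_f$ is dense in $\nbran P_f$ (and likewise for $g$). Hence $P_g A^{(k)} P_f = 0$ if and only if $\Ga_g A^{(k)} \Ga_f = 0$.

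The main obstacle is the bookkeeping in Step 2. One must check that changing to the bases $(h'_j)$, $(h''_i)$, which involves transposes and conjugations relative to $(h_i)$, really turns the coefficients of $T_k$ into the entries of $P_g A^{(k)} P_f$, or of its transpose or conjugate. Any of these variants still yields the vanishing of $P_g A^{(k)} P_f$. The cleanest route is to write $T_k$ as an operator composition $S_f\,\cdot\,S_g^*$, as in the proof of Lemma \ref{kercomm}, and to observe that $S_f$ and $S_g$ are injective on $\nbran P_f$ and $\nbran P_g$ respectively, by Lemma \ref{Gramker2}.
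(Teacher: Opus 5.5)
Your argument is correct, but its key step differs from the paper's. The paper never changes basis. Working in the original basis $(h_i)$, it computes the matrix entries $(\Ga_g A^{(k)} \Ga_f)_{p,q} = \ip{f_q}{T_k g_p}_\cJ$, so $T_k = 0$ for $1 \le k \le d$ gives $\Ga_g A^{(k)} \Ga_f = 0$ at once, with no re-expansion over new bases. For $k = 0$ it does not use $T_0$ at all. Instead it compresses the kernel equation, $0 = (P_g \otimes I_\cJ)\hat{A}(\Xi)(P_f \otimes I_\cJ) f = (P_g A^{(0)} P_f \otimes I_\cJ) f$, and applies Lemma \ref{Gramker2}.

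You instead re-expand $T_k$ over orthonormal bases of $\nbran P_f$ and $\nbran P_g$, so that its coefficient matrix is literally $P_g A^{(k)} P_f$. You then extract coefficients using the linear independence from Lemma \ref{liconstruct} twice, and you treat $k = 0$ uniformly through $T_0 = -\sum_k \Xi_k T_k$, taken from the proof of Lemma \ref{kercomm}. The paper's Gram-sandwich identity is a single computation and sidesteps the bookkeeping you flagged. Your route is uniform in $k$ and makes the equivalence $T_k = 0 \Leftrightarrow P_g A^{(k)} P_f = 0$ transparent.

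The bookkeeping you worried about does close, with no transpose or conjugation. In $T_k = \sum_{i,j} \ip{h_i}{A^{(k)} h_j} \ip{g_i}{\cdot} f_j$, each index enters once linearly and once antilinearly. For example, $g_i = (h_i^* \otimes I_\cJ) g$ enters through the functional $\ip{g_i}{\cdot}$, which is linear in $h_i$, while $\ip{h_i}{A^{(k)} h_j}$ is antilinear in $h_i$. So each sum is independent of the orthonormal basis chosen for that index separately. Basis vectors from $\nbker P_f$ or $\nbker P_g$ contribute zero by Lemma \ref{Gramker}.

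One small correction to your closing remark. The map $S_f$ with $S_f h_j = f_j$ has kernel $C \nbker P_f$, where $C$ is the conjugation in the basis $(h_j)$, not $\nbker P_f$. It is the antilinear map $h \mapsto (h^* \otimes I_\cJ) f$ whose kernel is exactly $\nbker P_f$. This does not affect your Steps 2--3 as written.
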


\begin{proof}
If $\Delta_1 (\Xi) =d$ and $A(\Xi ) f = 0 = A (\Xi ) ^* g$ where $g = \sum h_i \otimes g_i$ and $f = \sum h_i \otimes f_i$ have linearly independent components, \emph{i.e.} $(g_i) \subseteq \cJ$ and $(f_j) \subseteq \cJ$ are square--summable sequences, then let $T_k$, $0 \leq k \leq d$ be as in Lemma \ref{kercomm}. Fix $k$ and set $T:= T_k$ and $A:= A^{(k)}$. Then for any $p,q \in \N$, 
\ba \ip{f_q}{T g_p}_\cJ & = & \ip{f_q}{\sum _{i,j} A _{i,j} \ip{g_i}{g_p}_\cJ f_j}_\cJ \\
& = & \sum _{i,j} A_{i,j} \ip{g_i}{g_p} _\cJ \ip{f_q}{f_j}_\cJ. \ea 
Consider the product
\ba \Ga _g A & = & \sum _{i,j,k,\ell} \ip{g_i}{g_j}_\cJ \ip{h_i}{h_k}_\cH A_{k,\ell} \ip{h_\ell}{\cdot}_\cH h_j \\
& = & \sum _{i,j,\ell} \ip{g_i}{g_j}_\cJ A_{i,\ell} \ip{h_\ell}{\cdot}_\cH h_j, \ea and then the product,
\ba \Ga _g A \Ga _f & = & \sum _{\substack{i,j,\ell \\ m,n}} \ip{g_i}{g_j}_\cJ A_{i,\ell} \ip{h_\ell}{h_n}_\cH \ip{f_m}{f_n}_\cJ \ip{h_m}{\cdot}_\cH h_n \\
& = & \sum _{j,m} \left( \sum _{i, \ell} \ip{g_i}{g_j}_\cJ A_{i,\ell} \ip{f_m}{f_\ell}_\cJ \right) \ip{h_m}{\cdot}_\cH h_j. \ea 
It then follows from direct comparison that 
$$ (\Ga _g A ^{(k)} \Ga _f)_{p,q} = \ip{h_p}{\Ga _g A \Ga _f h_q}_{\cH} = \ip{f_q}{T f_p}_\cJ = T_{p,q} = (T_k)_{p,q}. $$
Since we assume that $\Delta_1 (\Xi ) =d$, this implies that each $T_k =0$ for $1\leq k \leq d$, and hence $\Ga _g A^{(k)} \Ga _f =0$, or equivalently, $P_g A^{(k)} P_f =0$ for $1\leq k \leq d$. Finally, since $P _g A^{(k)} P _f =0$ for $1 \leq k \leq d$,
\ba 0 & = & P_g \otimes I_\cJ A(\Xi) f = P_g \otimes I_\cJ A(\Xi) P_f \otimes I_\cJ f  \\
& = & P_g A^{(0)} P_f \otimes I_\cJ f, \ea 
and we conclude that also $P_g A^{(0)} P_f =0$, and  $0 =\Ga _g A^{(0)} \Ga _f$ by Lemma \ref{linindep}, since the components of $f$ are linearly independent with respect to any fixed orthonormal basis of $\cH ' = \nbran P_f$ by Lemma \ref{liconstruct}. Alternatively, the same conclusion follows by applying Lemma \ref{Gramker2}. Namely, by Lemma \ref{Gramker2}, since $P_g A^{(0)} P_f \otimes I_\cJ f =0$, we have that $P_g A^{(0)} P_f =0$.
\end{proof}

\section{Application to $\scr{O} _d (r)$} \label{sec:application}

\begin{prop} \label{invatzero}
If $H \in \scr{O} _d (r) ^{\ell \times \ell}$ and $\Xi \in \mf{M} ^{1\times d}$ obey $\Delta_1 (\Xi) =d$, $\rownorm{ \Xi } <r$, then $\sigma _p (H(\Xi)) \subseteq \sigma (H(0))$ and $\sigma _p (H(\Xi)^*) \subseteq \sigma (H(0)^*)$. 
\end{prop}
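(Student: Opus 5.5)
Fix $\lambda \in \C$ with $\lambda \notin \sigma(H(0))$; we show $H(\Xi) - \lambda I$ has trivial kernel. Replacing $H$ by $H - \lambda I_\ell$, which still lies in $\scr{O}_d(r)^{\ell\times\ell}$, we may assume $\lambda = 0$ and $H(0)$ invertible. The goal is then to show that $H(\Xi)$ is injective (and, symmetrically, $H(\Xi)^*$ is injective, which handles the co-kernel statement with $\ov{\lambda}$ and $H(0)^*$).

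We take a realization of $H$ centred at $Y = 0$. By Proposition \ref{odrreal} and the NC Rota theorem, as in the proof of Proposition \ref{realatY}, there is a realization $H \sim_0 (A,B,C,D)$ with $D = H(0)$ invertible and $\|A\|_{\mr{col}}\,\|\Xi\|_{\mr{row}} < 1$. Hence $L_A(\Xi) = I - A\otimes \Xi$ is invertible, and Proposition \ref{Schursing} applies with $T$ the identity scalar point ($n=1$). It says that $\nbker H(\Xi) \neq \{0\}$ if and only if $\nbker L_{A^{(-1)}}(\Xi) \neq \{0\}$, and similarly for co-kernels. Here $L_{A^{(-1)}}(\Xi) = I - \sum_j (A_j - B_j D^{-1} C)\otimes \Xi_j$ is a monic affine linear pencil $\hat{A}'(\Xi)$ with $A'^{(0)} = I$ and $A'^{(j)} = -A^{(-1)}_j$ on $\cH^{\ell^2}$.

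Now suppose, for contradiction, that $f \neq 0$ lies in $\nbker \hat{A}'(\Xi)$. To apply Theorem \ref{zerocomp} we also need a nonzero $g$ in the kernel of the adjoint. This is the main obstacle, because the hypothesis only gives a kernel. We handle it using finiteness of $\mf{M}$. For each $j$, the operator $A^{(-1)}_j\otimes \Xi_j$ acts on $\cH^{\ell^2}\otimes\cJ$, and $\cH$ may be infinite dimensional, so the pencil is not directly affiliated to a finite algebra. Our approach is therefore to work with the finite-dimensional matrix-valued function $H(\Xi) \in M_\ell(\mf{M})$, which is affiliated to the finite von Neumann algebra $M_\ell(\mf{M})$. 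There, $\nbker H(\Xi) \neq 0$ if and only if $\nbker H(\Xi)^* \neq 0$, since the kernel and co-kernel projections are Murray--von Neumann equivalent. Transferring back through the isomorphisms $\Phi_T$ and $\Psi_T$ of Proposition \ref{Schursing} yields nonzero $f,g$ with $\hat{A}'(\Xi) f = 0 = \hat{A}'(\Xi)^* g$.

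Theorem \ref{zerocomp} then gives $P_g A'^{(k)} P_f = 0$ for all $0 \le k \le d$. For $k = 0$ this reads $P_g P_f = 0$, so $\nbran P_f \perp \nbran P_g$. It remains to show this is impossible, and for that we will use Corollary \ref{supportproj}, $f = P_f\otimes I\, f$. This gives
\[
0 = \hat{A}'(\Xi)f = P_f\otimes I\, f - \sum_j A^{(-1)}_j P_f \otimes \Xi_j\, f .
\]
Pairing against $g = P_g\otimes I\, g$, the first term vanishes because $P_gP_f = 0$, and the other terms vanish by Theorem \ref{zerocomp}, so this alone is not a contradiction. Instead, we exploit the fact that the realization comes from $H(0)$: we invert the monic pencil as a Neumann series, which is legitimate if we choose the realization of $H^{-1}$ directly with $\|A^{(-1)}\|_{\mr{col}}\|\Xi\|_{\mr{row}}<1$. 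Then $\hat{A}'(\Xi)$ is itself invertible and has zero kernel, which is the contradiction sought.

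Concretely, the final step is to choose the realization of $H^{-1}$ directly: $H^{-1}$ is analytic near $0$ with $r_\sigma(A^{(-1)})$ controlled. The delicate point, and the one we expect to need the most care, is justifying the norm bound on $A^{(-1)}$ relative to $\Xi$. Our plan is to use the orthogonality $P_gP_f=0$ together with $P_g A^{(-1)}_j P_f = 0$ to compress the pencil to $\nbran P_f$ and $\nbran P_g$, and derive the contradiction from the block-triangular structure there.
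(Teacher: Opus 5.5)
Several parts of your proposal are correct and match the paper: the reduction to $\lambda=0$, the realization $H\sim_0(A,B,C,D)$ with $\|A\|_{\mr{col}}\|\Xi\|_{\mr{row}}<1$, and the transfer of kernels and co-kernels through Proposition \ref{Schursing}. So are the use of finiteness of $\mf{M}^{\ell\times\ell}$ and your remark that Theorem \ref{zerocomp} with $k=0$ only gives $P_gP_f=0$. The step you use to finish, however, does not work. You cannot in general choose a realization of $H^{-1}$ with $\|A^{(-1)}\|_{\mr{col}}\|\Xi\|_{\mr{row}}<1$. As in the easy direction of Proposition \ref{odrreal}, this would force the power series of $H^{-1}$ to converge beyond $\|\Xi\|_{\mr{row}}$. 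Combined with $\|A\|_{\mr{col}}\|\Xi\|_{\mr{row}}<1$, it would make $H(\Xi)$ boundedly invertible without ever using $\Delta_1(\Xi)=d$. This already fails for $d=\ell=1$, $H=1-\fz\in\scr{O}_1$, and $\Xi$ a standard semicircular, where $\Delta_1(\Xi)=1$ because $\delta^*(\Xi)=1$. Here $H^{-1}=\sum_n \fz^n$ has radius of convergence $1<2=\|\Xi\|$, and $0\in\sigma(1-\Xi)$. Moreover, the pencil whose kernel contains $f$ is $L_{A^{(-1)}}$ with $A^{(-1)}_j=A_j-B_jD^{-1}C$, which is fixed by $(A,B,C,D)$. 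You cannot swap it for a pencil coming from some other realization of $H^{-1}$.

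The block-triangular plan in your last paragraph does not close the gap either. Your $g$ comes from finiteness applied to the \emph{uncompressed} $H(\Xi)$, so its support lies under $I-P_f$. Hence $g\in(\nbran P_f)^\perp\otimes\cJ$ is killed by compression to $\nbran P_f$, and the relations you have are mutually consistent. The missing idea is to compress the realization of $H$ itself by $P:=P_f$, giving $H'\sim_0(PA|_{\nbran P},PB,C|_{\nbran P},D)$. This keeps the norm bound and the same invertible $D$. The inverse realization of $H'$ is then the compression of that of $H^{-1}$. Since $P\otimes I_\cJ f=f$, $f$ lies in the kernel of the compressed pencil, and $\Phi' f=\Phi f=x\neq 0$, so $H'(\Xi)$ has a kernel. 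Now apply finiteness of $\mf{M}^{\ell\times\ell}$ to $H'(\Xi)$, not to $H(\Xi)$. This gives a nonzero co-kernel vector $g'$ of the \emph{compressed} pencil, whose support $Q$ satisfies $Q\le P$. Theorem \ref{zerocomp} with $k=0$ then gives $0=QP=Q$, contradicting $g'\neq 0$.
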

\begin{proof}
We prove the first statement, the second statement follows from a symmetric argument. 

Let $H \sim _0 (A,B,C,D)$ where $A \in \scr{B} (\cH ^{\ell^2})$, $B \in \scr{B} (\C ^\ell, \cH ^{\ell^2})$, $C \in \scr{B} (\cH ^{\ell ^2}, \C ^\ell)$ and $D = H (0) \in \C ^{\ell \times \ell}$. If $\la \notin \sigma (H(0))$, then $D - \la I_\ell$ is invertible and we consider 
$H - \la I_\ell \sim _0 (A,B,C, D - \la I_\ell)$. By Proposition \ref{odrreal} and Proposition \ref{realatY}, we can and do assume that $\| A \| _{\mr{col}} \cdot \| \Xi \| _{\mr{row}} <1$ so that $H(\Xi)$ is well-defined.

Suppose that $\la \in \sigma _p (H (\Xi))$ so that $H(\Xi) - \la I_{\cJ ^\ell}$ has kernel. Choose $0 \neq x \in \cJ ^\ell$ so that $H(\Xi) x = \la x$. Then, by Proposition \ref{Schursing},
$$ f = \Phi ^{-1} x = -L_A (\Xi ) ^{-1} B(\Xi) x \in \nbker L_{A^{(-1)}} (\Xi), $$ where here, $(H - \la I_\ell) ^{-1} \sim _0 (A^{(-1)}, B^{(-1)}, C^{(-1)}, (D- \la I_\ell) ^{-1})$ so that 
\[
    L_{A^{(-1)}} (\Xi) 
    = I_{\cH ^{\ell ^2} \otimes \cJ} - A \otimes \Xi + B(D-\la I_\ell) ^{-1}C \otimes \Xi.
\]

Let $P := P_f \in \scr{B} (\cH ^{\ell^2})$ be the support projection of $f \in \cH ^{\ell ^2} \otimes \cJ$. Compress the realization of $H_\la := H-\la I_\ell$ by $P$ to obtain $H_\la '$, the NC function given by the compressed realization, $(A', B' , C', D -\la I_\ell)$, where 
$\nbran P =: \cH '$, 
$$ A' _j := P A_j | _{\cH'}, \quad B' _j := P B_j, \quad \mbox{and} \quad C' := C | _{\cH '}. $$
Then, since $H' _\la (0) = D -\la I_\ell$ is invertible, 
$$ (H' _\la) ^{-1} \sim _0 (A^{' (-1)}, B^{' (-1)}, C  ^{'(-1)}, (D -\la I_\ell) ^{-1} ), $$ where, 
\ba 
    A _j ^{' (-1)} 
        & = & A' _j - B' _j (D -\la I_\ell ) ^{-1} C' \\
        & = &  P \restr{\left(A_j - B_j (D -\la I_\ell ) ^{-1} C \right)}{\cH'} = P A_j ^{(-1)} | _{\cH' = \nbran P}, \\
    B_j ^{' (-1)} 
        & = & -B_j ' (D - \la I_\ell ) ^{-1} = P B_j ^{(-1)}, \\
    C ^{' (-1)} 
        & = & (D-\la I_\ell ) ^{-1} C' = C ^{(-1)}| _{\cH'}, 
\ea
and $H_\la ^{-1} \sim _0 (A^{(-1)}, B^{(-1)}, C^{(-1)}, (D - \la I_\ell) ^{-1} )$. That is, the realization of $(H' _\la) ^{-1}$ is obtained by compressing the realization of $H_\la ^{-1}$ by $P = P_f$. 

Recall, that by Proposition \ref{realatY}, we know $\colnorm{A} \cdot \rownorm{\Xi}<1$, thus
\[
    \colnorm{A'} \cdot \rownorm{\Xi} 
        \leq \colnorm{A} \cdot \rownorm{\Xi} < 1,
\]
and hence $\Xi$ is in the invertibility domain of the realization $(A',B',C', D)_Y$ and $H' _\la (\Xi)$ is well-defined. 
Moreover $H' _\la (0) = D-\la I_\ell$ is invertible, hence $H' _\la$ is non-zero. 
In fact, Proposition \ref{Schursing} tells us that we have a linear isomorphism, 
\[
    \Phi ' : \nbker L_{A^{'(-1)}} (\Xi) \twoheadrightarrow \nbker H' _\la (\Xi),
\]
$\Phi ' = - (D - \la I_\ell)^{-1} C' \otimes I_\cJ$, with inverse, $(\Phi ') ^{-1} := -L_{A'} (\Xi ) ^{-1} B' \otimes \Xi$. 
Since $P \otimes I_\cJ f = f$, by Corollary \ref{supportproj},
\[
    L_{A^{' (-1)}} (\Xi ) f 
        = P \otimes I_\cJ L_{A^{(-1)}} (\Xi) P \otimes I_\cJ f = 0,
\]
so that $f =\Phi ^{-1} x \in \nbker  L_{A^{' (-1)}} (\Xi )$.
Hence, $\Phi ' f = x' \in \nbker H' _\la (\Xi)$, 
\ba 
    x' 
        & = & \Phi ' f = (D-\la I_\ell) ^{-1} C | _{\nbran P = \cH'} \otimes I\cJ f \\
        & = & (D-\la I_\ell) ^{-1} C \otimes I_\cJ (P \otimes I_\cJ) f \\
        & = & (D-\la I_\ell) ^{-1} C \otimes I_\cJ f = \Phi f = \Phi \Phi ^{-1} (x) = x. 
\ea 
That is, $0 \neq x \in \nbker H' _\la (\Xi)$. 
Since $\mf{M} ^{\ell \times \ell}$ is a finite von Neumann algebra, there exists $0 \neq y \in \nbker (H' _\la (\Xi )) ^*$, and hence, by Proposition \ref{Schursing}, there exists a $0 \neq g = (\Psi ' ) ^{-1} (y) \in \nbker L_{A^{' (-1)}} (\Xi)$. 
If $Q = P_g$ is the support projection of $g$ in $\scr{B} (\cH')$ where $\cH' = \nbran P$, it follows that we can view $Q \leq P$ as an orthogonal projection dominated by $P \in \scr{B} (\cH ^{\ell ^2})$. 
In conclusion, by Theorem \ref{zerocomp}, since $L_{A^{(-1)}}$ is a monic affine-linear pencil,
\[
    0 = Q I_{\cH ^{\ell^2}} P = Q,
\]
contradicting that $Q$ is a non-zero projection since $Q=P_g$ and $g \neq 0$.
\end{proof}

\begin{remark}
By \cite[Lemma 5.8, proof of Corollary 5.9]{AMS-ncentire}, any full matrix, $H \in \scr{O} _d (r) ^{\ell \times \ell}$ is invertible at some matrix-point, $Y \in r \cdot \rball$, and if $Y \in \cdn$, then the set of all $X \in r \cdot \bdn$ at which $H(X)$ is invertible is analytic-Zariski dense in $r \cdot \bdn$. 
Our original strategy to extend the conclusion of the previous proposition to arbitrary full $H \in \scr{O} _d (r) ^{\ell \times \ell}$ that are not necessarily invertible at $0$, was to instead apply similar arguments to the realization of $H$ centred at a matrix-point $Y \in r \cdot \bdn$ at which $H(Y)$ is invertible. This, however, ran into technical difficulties that we have not yet been able to overcome. Instead, given such a matrix-point, $Y$, the following arguments allow us to construct, from the $d-$tuple, $\Xi= (\Xi_1, \cdots, \Xi _d)$, a $dn^2-$tuple, 
$$ \Ups = (\Ups ^{(1)}, \cdots, \Ups ^{(d)}); \quad \quad \Ups ^{(k)} _{i,j} = e_{1,i} \Xi _k e_{j,1} - y^{(k)} _{i,j} I, $$ where 
$$ Y = (Y ^{(1)}, \cdots, Y ^{(d)} ); \quad \quad Y ^{(k)} = (y^{(k)} _{i,j}) _{i,j =1} ^n \in \C ^{n \times n}, $$ and the $e_{1,i}, e_{j,1}$ are the embeddings of the standard $n\times n$ matrix units in $(\mf{M},\tau) *_\C (\C ^{n\times n}, \tr _n)$, so that $\Delta _1 (\Xi ) = d$ if and only if $\Delta _1 (\Ups ) = dn^2$. This will allow us to ``subtract away $Y$" and reduce the problem to the case of $H$ being invertible at $0$. 
\end{remark}

In \cite{CS-hom}, Connes and Shlyakhtenko define the quantity $\Delta(\Xi_1,\cdots,\Xi_d)$ for a self-adjoint set of operators, $\Xi$, that generate $\mf{M}$. Here, by a self-adjoint set, we mean that for every $1 \leq j \leq d$, $\Xi_j^* \in \{\Xi_1,\cdots,\Xi_d\}$. In \cite[Theorem 3.3]{CS-hom}, they establish properties of the invariant $\Delta$. The relationship between $\Delta$ and $L^2-$homology relies on the self-adjointness assumption. However, the general properties of subadditivity and additivity of $\Delta$ along free sets \cite[Theorem 3.3(d)]{CS-hom} and \cite[Theorem 3.3(e)]{CS-hom}, respectively, do not require this assumption of self-adjointness.

\begin{lem} \label{lem:matrix_delta}
Let $\Xi_1,\cdots,\Xi_d \in \mf{M}$ form a self-adjoint set. Assume that $\alg(\Xi_1,\cdots,\Xi_d) \cong \C^{n \times n}$, where the inclusion in $\mf{M}$ is unital and trace-preserving. Then, 
\[
\Delta_1(\Xi_1, \cdots,\Xi_d) = \Delta(\Xi_1,\cdots,\Xi_d) = 1 - \frac{1}{n^2}.
\]
\end{lem}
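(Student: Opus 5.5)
The plan is to sandwich both quantities between the same two numbers. Since $\FR \subseteq \TC \subseteq \HS$, and the commutator condition is the same in all three cases, we have
$$ d - \mr{dim}\, \ker_{\HS} \leq \Delta_1(\Xi) \leq \Delta(\Xi), $$
where $\ker_{\HS}$ denotes the Hilbert $\mf{M}\ov{\otimes}\mf{M}^{\mr{op}}$-module of all $T \in \HS(\cJ)^{1\times d}$ with $\sum_j [T_j,\Xi_j]=0$. It then suffices to prove two things: (a) $\mr{dim}\,\ker_{\HS} = d-1+\frac{1}{n^2}$, and (b) the finite-rank solutions are $\|\cdot\|_{\HS}$-dense in $\ker_{\HS}$. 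Statement (b) gives $\Delta(\Xi) = d - \mr{dim}\,\ker_{\HS}$, which closes the sandwich.

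For (a), I will use the identification $\HS(\cJ) \cong L^2(\mf{M}\ov{\otimes}\mf{M}^{\mr{op}})$. Under it, left and right multiplication by $\Xi_j$ become multiplication by $\Xi_j\otimes 1$ and $1\otimes \Xi_j^{\mr{op}}$. The map
$$ \delta : T \mapsto \sum_j [T_j,\Xi_j] = \sum_j (\Xi_j^{\mr{op}}\text{-side} - \Xi_j\text{-side})\,T_j $$
is therefore multiplication by a row $x=(x_1,\dots,x_d)$ with $x_j = 1\otimes\Xi_j^{\mr{op}} - \Xi_j\otimes 1 \in \mf{M}\ov{\otimes}\mf{M}^{\mr{op}}$. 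Rank--nullity for Murray--von Neumann dimension gives $\mr{dim}\,\ker_{\HS} = d - \mr{dim}\,\ov{\nbran\,\delta}$. The closed range is the range of the projection $1-e$, where $e$ is the projection onto $\bigcap_j \nbker x_j^*$. Because $\{\Xi_j\}$ is self-adjoint and generates $N:=\C^{n\times n}$, this intersection equals $\{\xi : (a\otimes 1 - 1\otimes a^{\mr{op}})\xi = 0 \ \forall a\in N\}$, so
$$ e = \frac{1}{n}\sum_{i,j} e_{ij}\otimes e_{ji}^{\mr{op}}. $$
This $e$ lies in $N\otimes N^{\mr{op}} \subseteq \mf{M}\ov{\otimes}\mf{M}^{\mr{op}}$. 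Using that the inclusion is trace-preserving, $\tau\otimes\tau(e) = \frac{1}{n}\sum_i \tau(e_{ii})^2 = \frac{1}{n^2}$. Hence $\mr{dim}\,\ov{\nbran\,\delta} = 1 - \frac{1}{n^2}$, which is (a).

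For (b), note that $x$ is a row over the finite-dimensional $C^*$-algebra $N\otimes N^{\mr{op}}$. So $x$, viewed as an element of $\C^{1\times d}\otimes (N\otimes N^{\mr{op}})$, has closed range, and its Moore--Penrose inverse $x^{\dagger}$ lies in $\C^{d\times 1}\otimes (N\otimes N^{\mr{op}})$. Consequently the orthogonal projection onto $\ker_{\HS}$ is $Q := I_d - x^{\dagger}x$, a $d\times d$ matrix over $N\otimes N^{\mr{op}}$. Any element of $N\otimes N^{\mr{op}}$ acts on $\HS(\cJ)$ by finite sums of maps $T\mapsto aTb$, and such maps preserve finite rank. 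Hence $Q$ maps $\FR(\cJ)^{1\times d}$ into the finite-rank solutions. Since $\FR$ is dense in $\HS$ and $Q$ is bounded and surjective onto $\ker_{\HS}$, the finite-rank solutions are dense in $\ker_{\HS}$. This gives (b), and with it $\Delta_1 = \Delta = 1-\frac{1}{n^2}$.

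The step I expect to be most delicate is the identification in (a) of the joint kernel projection $e$ together with the computation of its trace. This uses self-adjointness, so that $\bigcap_j\nbker x_j^*$ is the $N$-central part, and it uses unitality and trace-preservation, so that $\tau(e_{ij}) = \delta_{ij}/n$. I also need to make sure the module-dimension bookkeeping is done with the correct left/right module convention, so that rank--nullity applies as stated.
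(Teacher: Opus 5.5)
Your proof is correct. It follows the paper on the density step but takes a different, self-contained route to the value $1-\frac{1}{n^2}$.

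Your step (b) is essentially the paper's proof of $\Delta_1=\Delta$. The paper writes $\mathfrak{M}\cong\mathbb{C}^{n\times n}\otimes\mathfrak{M}_0$ and notes that the Connes--Shlyakhtenko differential acts only on the matrix factor. Hence the kernel projection lies in (matrices over) $\mathbb{C}^{n\times n}\otimes(\mathbb{C}^{n\times n})^{\mathrm{op}}$, and the paper applies this projection to finite-rank approximants. You obtain the same membership more directly, from the Moore--Penrose inverse of $x$ in the finite-dimensional $C^*$-algebra $M_d(N\otimes N^{\mathrm{op}})$, so you never need the tensor decomposition of $\mathfrak{M}$. You also correctly place $Q$ in $M_d(N\otimes N^{\mathrm{op}})$, not in $N\otimes N^{\mathrm{op}}$ as the paper loosely states.

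For the value itself, the paper simply cites Connes--Shlyakhtenko's Theorem 3.3(b) together with their $L^2$-Betti number computation for $\mathbb{C}^{n\times n}$. You instead compute $\dim\ker_{\mathrm{HS}}$ from scratch, using rank--nullity and the explicit projection $e=\frac{1}{n}\sum_{i,j}e_{ij}\otimes e_{ji}^{\mathrm{op}}$ onto the $N$-central vectors.

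The paper's route is shorter, but it leans on the $L^2$-homology (Tor) identification. That is exactly why it must handle $\Delta_1$ by a separate argument. Your sandwich $d-\dim\ker_{\mathrm{HS}}\le\Delta_1\le\Delta$, combined with (b), gives both equalities at once from one elementary calculation.

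Your route also shows where each hypothesis enters. Self-adjointness of the set turns $\bigcap_j\ker x_j^*$ into the commutant condition and supplies a $*$-system of matrix units. Unitality, together with traciality of $\tau$, already forces $\tau(e_{ii})=\frac{1}{n}$ and $\tau(e_{ij})=0$ for $i\neq j$, so trace-preservation comes for free.
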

\begin{proof}
The second equality is \cite[Theorem 3.3(b)]{CS-hom} together with their calculation of the Betti numbers of the matrix algebra. To establish the first equality, however the identification with the first torsion functor for the relations in \cite{CS-hom} is unclear. Instead, we will apply finite-dimensionality and the fact that $\Xi_1,\cdots,\Xi_d$ generate $\C^{n \times n}$ as a unital algebra to prove the first equality.
    Let us write $F = \{\Xi_1,\cdots,\Xi_d\}$. 
    We first note that our assumption implies that $\mf{M} \cong \C^{n \times n} \otimes \mf{M}_0$, where $\mf{M}_0$ is cut-down of $\mf{M}$ by the first matrix unit equipped with the appropriate trace $\tau_0$, so that $\tau = \tr_n \otimes \tau_0$. 
    Therefore, $L^2(\mf{M},\tau) \cong \C^{n \times n} \otimes L^2(\mf{M}_0,\tau_0)$. 
    With this identification, the Hilbert--Schmidt operators can be identified with $\HS(L^2(\mf{M},\tau)) \cong \C^{n \times n} \otimes (\C^{n \times n})^{\opp} \otimes L^2(\mf{M}_0,\tau_0) \overline{\otimes} L^2(\mf{M}_0,\tau_0)$. 
    On this space, the Connes--Shlyakhtenko differential operator is given by $\partial_{F} \colon  \HS(L^2(\mf{M},\tau))^{\oplus d} \to \HS(L^2(\mf{M},\tau))$,
    \[ \partial_{F}(a \otimes b \otimes \xi \otimes \eta \otimes e_j) = ((a \Xi_j) \otimes b - a\otimes (\Xi_j b)) \otimes \xi \otimes \eta.
    \]
    Therefore, the kernel of the differential is of the form $V \otimes L^2(\mf{M}_0,\tau_0) \overline{\otimes} L^2(\mf{M}_0,\tau_0)$, where $V \subseteq \C^{n \times n} \otimes (\C^{n \times n})^{\opp} \otimes \C^d$ is the kernel of the differential corresponding to $\C ^{n \times n}$ itself. In particular, the projection onto the kernel belongs to the algebra $\C^{n \times n} \otimes (\C^{n \times n})^{\opp}$. Let us denote this projection by $p$. If $T \in \ker \partial_{F}$, we can find a sequence of finite ranks $T_m \to T$ in the Hilbert-Schmidt norm. Hence, $p T_m \to T$, but $p T_m$ is in the kernel of the differential operator. Hence, $\Delta_1(F) = \Delta(F)$.
\end{proof}

Assume that $\mf{M} = W^*(\Xi_1,\cdots,\Xi_d)$. Consider the free product $(\mf{N},\psi) = (\mf{M},\tau) *_{\C}(\C^{n \times n}, \tr_n)$. Let $e_{ij}$ be the matrix units of $\C^{n \times n}$ embedded in $\mf{N}$. Let $\mf{N}_0 = e_{11} \mf{N} e_{11}$ and equip $\mf{N}_0$ with the trace $\psi_0 = n \psi|_{\mf{N}_0}$.

\begin{lem}
    There is a *-isomorphism $\Theta \colon \mf{N} \to \C^{n \times n} \otimes \mf{N}_0$, such that $(\tr_n \otimes \psi_0) \circ \Theta = \psi$.
\end{lem}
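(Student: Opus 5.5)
The isomorphism comes from the system of matrix units $(e_{ij})_{i,j=1}^n \subseteq \mf{N}$. Since the inclusion of $\C^{n\times n}$ into the free product is unital, $\sum_i e_{ii} = 1_{\mf{N}}$. We define
$$ \Theta(x) := \sum_{i,j=1}^n E_{ij} \otimes e_{1i}\, x\, e_{j1}, \qquad x \in \mf{N}, $$
where $E_{ij}$ are the standard matrix units of $\C^{n\times n}$. Each entry $e_{1i} x e_{j1}$ lies in $e_{11}\mf{N}e_{11} = \mf{N}_0$, so $\Theta$ maps $\mf{N}$ into $\C^{n\times n}\otimes \mf{N}_0$.

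First we check that $\Theta$ is a unital $*$-homomorphism.
\begin{itemize}
\item Multiplicativity follows from $(\Theta(x)\Theta(y))_{ij} = \sum_k e_{1i} x e_{k1} e_{1k} y e_{j1} = e_{1i} x \left(\sum_k e_{kk}\right) y e_{j1} = \Theta(xy)_{ij}$.
\item The $*$-property is $(e_{1i}xe_{j1})^* = e_{1j}x^*e_{i1}$.
\item Unitality is $\Theta(1)_{ij} = e_{1i}e_{j1} = \delta_{ij} e_{11} = \delta_{ij} 1_{\mf{N}_0}$.
\end{itemize}

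Next we write down the inverse explicitly:
$$ \Theta^{-1}\Big(\sum E_{ij}\otimes a_{ij}\Big) = \sum_{i,j} e_{i1} a_{ij} e_{1j}. $$
One composition gives $\sum_{i,j} e_{i1}e_{1i} x e_{j1} e_{1j} = \sum_{i,j} e_{ii} x e_{jj} = x$. The other gives $e_{1i}\big(\sum_{k,l} e_{k1}a_{kl}e_{1l}\big)e_{j1} = e_{11} a_{ij} e_{11} = a_{ij}$, using $a_{ij} \in \mf{N}_0$. So $\Theta$ is a bijective $*$-isomorphism of algebras. It is automatically normal, since its entries are built from multiplication by fixed elements, and both sides are von Neumann algebras.

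For the trace, let $x \in \mf{N}$. Then
$$ (\tr_n \otimes \psi_0)(\Theta(x)) = \frac{1}{n}\sum_i \psi_0(e_{1i} x e_{i1}) = \sum_i \psi(e_{1i} x e_{i1}). $$
By traciality of $\psi$, $\psi(e_{1i} x e_{i1}) = \psi(x e_{i1} e_{1i}) = \psi(x e_{ii})$. Summing over $i$ gives $\psi(x)$. The normalisation $\psi_0 = n\,\psi|_{\mf{N}_0}$ is exactly what makes this work, and it also makes $\psi_0$ a state: $\psi(e_{11}) = \tr_n(E_{11}) = 1/n$ because the free product restricts to $\tr_n$ on the copy of $\C^{n\times n}$.

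There is no real obstacle. The only point needing care is that the copy of $\C^{n\times n}$ in the free product is unital and that $\psi$ restricts to $\tr_n$ on it; both hold by the construction of the reduced free product with amalgamation over $\C$.
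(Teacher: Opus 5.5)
Your proof is correct and matches the paper's argument essentially verbatim. You use the same map $\Theta(x)=\sum_{i,j}E_{ij}\otimes e_{1i}xe_{j1}$, the same explicit inverse $\sum_{i,j}E_{ij}\otimes a_{ij}\mapsto\sum_{i,j}e_{i1}a_{ij}e_{1j}$, and the same traciality step $\psi(e_{1i}xe_{i1})=\psi(xe_{ii})$; your remarks on the unital embedding of $\C^{n\times n}$, on $\psi$ restricting to $\tr_n$ there (so $\psi_0$ is a state), and on normality simply make explicit what the paper leaves implicit.
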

\begin{proof}
Let $\Theta(a) = \sum_{i,j=1}^n E_{ij} \otimes e_{1i} a e_{j1}$, where $E_{i,j}$ are the standard matrix units of $\C ^{n \times n}$. Clearly, this a unital linear map. Moreover,
\[
\Theta(a)^* = \sum_{i,j=1}^n E_{ji} \otimes e_{1j} a^* e_{i1} = \Theta(a^*),
\]
and hence, $\Theta$ is self-adjoint. Now,
\[
\Theta(a) \Theta(b) = \sum_{i,j,k,\ell = 1}^n E_{ij} E_{k \ell} \otimes e_{1i} a e_{jk} b e_{\ell 1} = \sum_{i,j,\ell=1}^n E_{i\ell} \otimes e_{1i} a e_{jj} b e_{\ell 1} = \Theta(ab).
\]
The last equality follows from summing first the $j$ index and using the fact that the middle term of the product becomes the identity, and we conclude that $\Theta$ is a $*-$homomorphism.

Define $\La \colon \C^{n \times n} \otimes \mf{N}_0 \to \mf{N}$ by $\La \left( \sum_{i,j=1}^n E_{ij} \otimes a_{ij}\right) = \sum_{i,j=1}^n e_{i1} a_{ij} e_{1j}$. Then,
\[
\La(\Theta(a)) = \La\left( \sum_{i,j=1}^n E_{ij} \otimes e_{1i} a e_{j1} \right) = \sum_{i,j=1}^n e_{ii} a e_{jj} = a.
\]
Similarly,
\[
\Theta\left( \La \left( \sum_{i,j=1}^n E_{ij} \otimes a_{ij}\right) \right) = \sum_{i,j=1}^n \Theta(e_{i1} a_{ij} e_{ij}) = \sum_{i,j, k, \ell = 1}^n E_{k\ell} \otimes e_{1k} e_{i1} a_{ij} e_{1j} e_{\ell 1} = \sum_{i,j=1}^n E_{ij} \otimes a_{ij}.
\]
Hence, $\Theta$ is a *-isomorphism with inverse $\Theta ^{-1} = \La$. Moreover,
\[
(\tr_n \otimes \psi_0)(\Theta(a)) = \sum_{i,j = 1}^n \tr_n(E_{ij}) \psi_0(e_{1i} a e_{j1}) = \sum_{i = 1}^n \psi(e_{1i} a e_{i1}) = \sum_{i = 1}^n \psi(e_{ii} a) = \psi(a).
\]
The penultimate equality is due to the fact that $\psi$ is a trace.
\end{proof}
Consider the natural trace-preserving embedding $\iota \colon \mf{M} \to \mf{N}$. We have an induced embedding $\Theta \circ \iota \colon \mf{M} \to \C^{n \times n} \otimes \mf{N}_0$. In particular, if we set 
\be \Omega_k := \Theta(\Xi_k) = \sum_{i,j=1}^n E_{ij} \otimes e_{1i} \Xi_k e_{j1}, \label{Omdef} \ee then, $\Theta \circ \iota$ induces a trace preserving isomorphism from $\mf{M}$ to $W^*(\Omega_1,\cdots,\Omega_d)$. 

For the purpose of the next proof we require a rather mundane claim about the expectation $\tr_n \otimes I \colon \C^{n \times n} \otimes \mathscr{B}(\cH) \to \mathscr{B}(\cH)$. Let us fix a basis for $\mf{sl} _n(\C)$ given by $E_{i,j}$ for $i \neq j$ and $E_{1,1} - E_{i,i}$ for $2 \leq i \leq n$. Then, every $T \in \C^{n \times n} \otimes \mathscr{B}(\cH)$ can be written as $T = \frac{1}{n}I \otimes T_0 + \sum_{1 \leq i \neq j \leq n} E_{ij} \otimes T_{ij} + \sum_{i=2}^n (E_{11} - E_{ii}) \otimes T_i$. Clearly, $(\tr_n \otimes I) = T_0$. Hence, $T \in \ker \, \tr_n \otimes T_0$ if and only if $T_0 = 0$. Now write
\[
E_{11} - E_{ii} = [E_{1i}, E_{i1}] \text{ and } E_{ij} = [E_{i1},E_{1j}], \text{ for } i \neq j.
\]
Therefore, if we assume that $T \in \nbker \tr_n \otimes I$, then
\ba
T & = & \sum_{1 \leq i \neq j \leq n} [E_{i1}, E_{1j}] \otimes T_{ij} + \sum_{i=2}^n [E_{1i}, E_{i1}] \otimes T_i \\
& = & \sum_{1 \leq i \neq j \leq n} [E_{i1} \otimes I, E_{1j} \otimes T_{ij}] + \sum_{i=2}^n [E_{1i} \otimes I, E_{i1}\otimes T_i]. \ea
In particular, $T$ is finite-rank (respectively, trace-class) if and only if each of its coordinates is. Hence, every element in the kernel of the expectation that belong to the ideal of finite-rank or trace-class operators can be written as commutators with the set of ``matrices'' $E = \{E_{1i}\}_{i=2}^n \cup \{E_{i1}\}_{i=1}^n$. The latter set can easily be seen to generate $\C^{n \times n}$ as an algebra and it is self-adjoint in the sense of Connes and Shlyakhtenko. Therefore, if we consider the algebra $\C^{n \times n} \otimes \mf{N}_0$ as above, then by Lemma \ref{lem:matrix_delta}, we know that $\Delta(E) = \Delta_1(E) = 1 - \frac{1}{n^2}$ and $|E| = 2n - 1$.

\begin{prop} \label{thm:delta_equality}
    Using the notations above, let $F = \{e_{1i} \Xi_k e_{j1} \mid 1 \leq i, j \leq n,\, 1 \leq k \leq d\}$. Then, $\Delta(\Xi_1,\cdots,\Xi_d) = d$ if and only if $\Delta(F) = d n^2$. The same assertions hold for $\Delta _1$.
\end{prop}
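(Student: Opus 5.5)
The plan is to relate $\Delta(\Xi)$ and $\Delta(F)$ through the auxiliary set $E = \{e_{1i}\}_{i=2}^n \cup \{e_{i1}\}_{i=1}^n$ of matrix units in $\mf{N} = \mf{M} *_\C \C^{n\times n}$. All quantities are computed in $\mf{N}$, or in $\mf{N}_0 \cong e_{11}\mf{N}e_{11}$ for $F$. I will prove the numerical identities
\[
\Delta(\Xi)+\Big(1-\tfrac{1}{n^2}\Big) \;=\; \Delta(\Xi\cup E) \;=\; \Delta(\Xi\cup E\cup F) \;=\; \Delta(F\cup E) \;=\; \tfrac{1}{n^2}\,\Delta_{\mf{N}_0}(F) + \Big(1-\tfrac{1}{n^2}\Big),
\]
and the same chain with $\Delta_1$ in place of $\Delta$. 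Granting this, $\Delta(\Xi)=d$ holds if and only if $\Delta_{\mf{N}_0}(F) = dn^2$, which is the claim.

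\textbf{Step 1 (first equality).} The families $\Xi$ and $E$ are free in $(\mf{N},\psi)$. So additivity of $\Delta$ over free sets, \cite[Theorem 3.3(e)]{CS-hom}, gives the first equality, using $\Delta(E)=1-1/n^2$ from Lemma \ref{lem:matrix_delta}. As remarked before Lemma \ref{lem:matrix_delta}, additivity does not need self-adjointness. For $\Delta_1$ I will rerun the Connes--Shlyakhtenko argument with trace-class operators in place of finite-rank ones. Their argument only uses that the ideal is two-sided and sits $\|\cdot\|_{\HS}$-densely, so it should carry over; Lemma \ref{lem:matrix_delta} again supplies $\Delta_1(E)=\Delta(E)$.

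\textbf{Step 2 (adjoining polynomially dependent generators).} Next I show that $\Delta(Y\cup\{p(Y)\}) = \Delta(Y)$ for any free polynomial $p$ in a tuple $Y$, and likewise for $\Delta_1$. Let $\partial_j p$ denote the free difference quotient, an element of $\C\langle Y\rangle\otimes\C\langle Y\rangle^{\mr{op}}$. Given a relation $(T_1,\dots,T_d,S)$ for $(Y,p(Y))$, the identity $[S,p(Y)] = \sum_j \partial_j p \# [S,Y_j]$ rewrites it as a relation for $Y$ alone. Conversely, any $S$ in the ideal can be completed to a relation for the enlarged tuple. Both maps preserve the finite-rank and trace-class ideals, since $\partial_j p$ acts by left and right multiplication with bounded operators. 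They give a bounded isomorphism of Hilbert $\mf{N}\ov{\otimes}\mf{N}^{\mr{op}}$-modules between the relation module for the enlarged tuple and (relations of $Y$) $\oplus$ (a free rank-one module). So the relation module dimension increases by exactly $1$, as does the number of generators, and $\Delta$ is unchanged. The identity $\Xi_k = \sum_{i,j} e_{i1}(e_{1i}\Xi_k e_{j1})e_{1j}$ shows that each generator of $\Xi\cup E$ is a polynomial in $F\cup E$, and conversely. Two applications of this step therefore give the second and third equalities.

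\textbf{Step 3 (last equality; the main obstacle).} This step must be done by hand. Transport everything through $\Theta$, so that $\mf{N}=\C^{n\times n}\otimes\mf{N}_0$ and $L^2(\mf{N}) \cong \C^{n\times n}\otimes L^2(\mf{N}_0)$. The elements of $F$ then become $E_{11}\otimes F_{ijk}$ (with $F_{ijk} := e_{1i}\Xi_k e_{j1}$), and $E$ becomes the standard matrix units. Take a relation $(T_x)_{x\in F\cup E}$. The computation preceding the statement writes elements of $\nbker (\tr_n\otimes I)$ in the ideal as commutators with $E$. Using it, I would subtract a relation supported on $E$ alone so that the $F$-components become $E_{11}\otimes E_{11}$-corner operators. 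This should identify the relation module of $F\cup E$ with the direct sum of two pieces:
\begin{itemize}
\item the relation module of $F$ over $\mf{N}_0\ov{\otimes}\mf{N}_0^{\mr{op}}$, induced up to $\mf{N}\ov{\otimes}\mf{N}^{\mr{op}}$;
\item the relation module of $E$ tensored with $L^2(\mf{N}_0)\ov{\otimes}L^2(\mf{N}_0)$, as in the proof of Lemma \ref{lem:matrix_delta}.
\end{itemize}
Inducing from the corner $e_{11}\otimes e_{11}^{\mr{op}}$ multiplies Murray--von Neumann dimension by $1/n^2$. Counting generators, $dn^2$ from $F$ and $2n-1$ from $E$, then yields the stated formula. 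The delicate points are two. First, the subtraction must stay inside the finite-rank (respectively trace-class) ideal and be continuous in $\|\cdot\|_{\HS}$, so that the $\HS$-closures match. Second, the decomposition must be a genuine direct sum of Hilbert modules. I expect this bookkeeping to be the hardest part.
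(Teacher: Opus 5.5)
Your chain of identities is a genuinely different route from the paper's, and if completed it would give the sharper statement $\Delta_{\mf{N}_0}(F)=n^2\Delta(\Xi)$. But Step 3, which carries all the content, fails as described, in two ways.

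First, subtracting a relation supported on $E$ alone cannot change the $F$-components at all.

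Second, your two-piece decomposition misses a summand. Under $\Theta$ each $x\in F$ becomes $E_{11}\otimes x$. Writing $T=\sum_{a,b}E_{ab}\otimes T_{ab}$, one gets $(\tr_n\otimes\mathrm{id})[T,E_{11}\otimes x]=\frac{1}{n}[T_{11},x]$. So in a relation of $F\cup E$ only the blocks $T_{11}$ of the $F$-components are constrained; they must form an $n\times n$ matrix of relations of $F$ in $\mf{N}_0$, which is your induced module. The off-corner parts $T-E_{11}\otimes T_{11}$ are completely unconstrained. Their commutators lie in $\ker(\tr_n\otimes\mathrm{id})$, so by the commutator decomposition preceding the proposition they can always be completed by $E$-components to relations. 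The relation module of $F\cup E$ therefore has a third, free summand of dimension $dn^2(1-\frac{1}{n^2})=d(n^2-1)$:
\[
\dim\cR(F\cup E)=d(n^2-1)+\tfrac{1}{n^2}\bigl(dn^2-\Delta_{\mf{N}_0}(F)\bigr)+\bigl(2n-2+\tfrac{1}{n^2}\bigr).
\]
Only with this term does the count give your formula. With your two summands alone you would get $\Delta(F\cup E)=d(n^2-1)+\frac{1}{n^2}\Delta_{\mf{N}_0}(F)+1-\frac{1}{n^2}$. Combined with Steps 1--2, this would force $\Delta_{\mf{N}_0}(F)<0$ for $n\geq 2$.

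The correct three-piece splitting is realized by the explicit $\mathfrak{sl}_n$ commutator formula. That formula is bounded, preserves $\FR$ and $\TC$, and commutes with left and right multiplication by the commutant $\mf{N}'$, which acts blockwise. This also settles the closure issue you flagged.

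Step 2 needs a correction too. The identity $[S,p(Y)]=\sum_j\partial_jp\#[S,Y_j]$ is true, but it does not turn a relation of $(Y,p(Y))$ into one of the form $\sum_j[T_j',Y_j]=0$, because $a[S,Y_j]b\neq[aSb,Y_j]$ in general. Use the cyclic identity instead: for a monomial,
\[
[S,Y_{i_1}\cdots Y_{i_m}]=\sum_{l=1}^{m}\bigl[\,Y_{i_{l+1}}\cdots Y_{i_m}\,S\,Y_{i_1}\cdots Y_{i_{l-1}},\ Y_{i_l}\bigr].
\]
With $c(S)$ the resulting coefficients, $(T,S)\mapsto(T+c(S),S)$ is then the bounded module isomorphism you want, and the rest of Step 2 goes through.

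By comparison, the paper needs neither Step 2 nor an exact compression formula; it proves only the equivalence. For one implication it compresses a relation of $\Omega=\Theta(\Xi)$ to a relation of $F$. For the other it lifts a relation of $F$ to one of $\Omega\cup E$ via the same commutator lemma. Free additivity then gives $\dim\cR(\Omega\cup E)=\dim\cR(E)$, which forces the lifted $\Omega$-components to vanish.
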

\begin{proof}
Let $\theta \colon \C^{n \times n} \otimes \mf{N}_0 \to \mf{N}_0$ be the conditional expectation given by $\theta = \tr_n \otimes \operatorname{id}_{\mf{N}_0}$. Set $\cK = L^2(\C^{n \times n} \otimes \mf{N}_0,\tr_n \otimes \psi_0)$ and $\cH = L^2(\mf{N}_0,\psi_0)$. Let $\eta$ and $\xi$ be the corresponding cyclic vectors. The map $V (a\xi) = a \eta$ for $a \in \mf{N}_0$ extends to an isometric embedding of $\cH$ in $\cK$. Moreover, it is clear that $(\tr_n \otimes \psi_0) \circ \theta = \tr_n \otimes \psi_0$. By \cite[Proposition II.6.10.7]{Blackadar-op_alg}, we have that $\theta$ is the restriction to $\C^{n \times n} \otimes \mf{N}_0$ of the projection from $\cK$ to $\cH$. We note that $\cK \cong L^2(\C^{n \times n}) \otimes \cH \cong \C^{n \times n} \otimes \C^{n \times n} \otimes \cH$. Therefore, we can identify $\mathscr{B}(\cK) \cong \C^{n \times n} \otimes \C^{n \times n} \otimes B(\cH)$. The $\Omega_k$ act on $\cK$ by left multiplication. Hence, we can write this operator as $L_{\Omega_k} = I_n \otimes \Omega_k$.

Suppose that $\Delta(F) = d n^2$ and assume that there are $T_1,\cdots,T_d \in \mathscr{B}(\cK)$ of finite rank (equivalently, trace-class), such that $\sum_{k=1}^d [T_k, L_{\Omega_k}] = 0$. If we write $T_k = \sum_{i,j = 1}^d E_{ij} \otimes T_{ij}^{(k)}$, where each $T_{ij}$ is finite-rank (respectively, trace-class), then the above commutator can be rewritten as
\[
\sum_{k=1}^d \sum_{i,j=1}^n E_{ij} \otimes [T_{ij}^{(k)}, \Omega_k] = 0.
\]
In particular, if we fix $1 \leq p, q \leq m$, then $\sum_{k=1}^d [T_{pq}^{(k)}, \Omega_k]$. Compressing by $V$, we obtain that $\sum_{i,j = 1}^n \sum_{k=1}^m [(T_{pq}^{(k)})_{ij}, e_{1i} \Xi_k e_{j1}] = 0$. Therefore, $T_{pq}^{(1)} = \cdots = T_{pq}^{(d)} = 0$ and $\Delta(\Omega_1,\cdots, \Omega_d) = d$. By the isomorphism above $\Delta(\Xi_1,\cdots,\Xi_d) = \Delta(\Omega_1,\cdots,\Omega_d)$ (similarly for $\Delta_1$). 

Conversely, assume that $\Delta(\Xi_1,\cdots,\Xi_d) = d$, and furthermore, suppose that there are finite-rank or trace-class operators, $t_{ij}^{(k)}$, on $\cH$, such that $\sum_{i,j = 1}^n \sum_{k=1}^d [t_{ij}^{(k)}, e_{1i} \Xi_k e_{j1}] = 0$. Let $T_k = \sum_{i,j = 1}^n I \otimes E_{ij} \otimes t_{ij}^{(k)}$ and $R = \sum_{k=1}^d [T_k, \Omega_k]$. Clearly, $R$ is a finite-rank operator on $\cK$. As in the discussion preceding the proposition, every finite-rank (respectively, trace-class) operator in the kernel of the expectation can be written as a commutator with the set $E$. Let $F' = \{\Omega_1,\cdots,\Omega_d\} \cup E$. Recall that $\C^{n \times n} \otimes \mf{N}_0 \cong \mf{N} = \mf{M} *_{\C} \C^{n \times n}$. The isomorphism, $\Theta$ maps any constant $A = \sum_{i,j=1}^n a_{ij} e_{ij}$ to the matrix $\sum_{i,j=1}^n a_{ij} E_{ij} \otimes e_{11}$. Therefore, the $\Omega_k = \Theta(\Xi_k)$ are free from the scalar matrices and hence by \cite[Theorem 3.3(e)]{CS-hom}, 
\[
\Delta(F') = \Delta(\Omega_1,\cdots,\Omega_k) + \Delta(E) = d + 1 - \frac{1}{n^2}.
\]
On the other hand, let us denote the relation module of $F'$ by $\cR(F')$. Namely,
\[
\cR(F') = \left\{  (S_1,\cdots, S_{d + 2n -1}) \in \FR(\cK) \left| \  \sum_{k=1}^d [S_k, \Omega_k] + \sum_{\ell = 1}^n [S_{d+\ell}, E_{i1}] + \sum_{\ell=2}^n [S_{d + n + i - 1}, E_{1i}] = 0 \right. \right\}^{-HS}.
\]
Therefore,
\[
d + 1 - \frac{1}{n^2} = \Delta(F') = d + 2n - 1 - \dim_{(\C^{n \times n} \otimes \mf{N}_0) \bar{\otimes} (\C^{n \times n} \otimes \mf{N}_0)^{\opp}} \cR(F').
\]
Hence, $\dim_{(\C^{n \times n} \otimes \mf{N}_0) \bar{\otimes} (\C^{n \times n} \otimes \mf{N}_0)^{\opp} } \cR(F') = 2n - 2 + \frac{1}{n^2}$. On the other hand,
\[
\dim_{(\C^{n \times n} \otimes \mf{N}_0) \bar{\otimes} (\C^{n \times n} \otimes \mf{N}_0)^{\opp}} \cR(E) = 2 n  - 2 + \frac{1}{n^2}.
\]
Moreover, there is a natural embedding of the latter left $\mf{M} \bar{\otimes} \mf{M}^{\opp}$-module into the former, by sending $(S_1,\cdots,S_{2n-1})$ to $(0,\cdots,0,S_1,\cdots,S_{2n-1})$. The above calculations show that the Murray--von Neumann dimensions of these two modules are the same. As seen in \cite{Luck-hilbert_modules}, since the trace on $\C^{n \times n} \otimes \mf{N}_0 \bar{\otimes} (\C^{n \times n} \otimes N_0)^{\opp}$ is faithful, the dimension of any proper sub-module is strictly less that the dimension of the ambient sub-module. Hence, the embedding is an isomorphism and we deduce that $T_k = 0$, for all $1 \leq k \leq d$. All of the above arguments hold if we replace finite-rank with trace-class operators so that our claim holds for both $\Delta$ and $\Delta _1$.
\end{proof}

The following theorem is the main result of this section, it generalizes \cite[Theorem 1.1, (v) $\Rightarrow$ (iii), (ii)]{freefield}.

\begin{thm} \label{main1}
Let $H \in \scr{O} _d (r) ^{\ell \times \ell}$ be full, and let $\Xi \in \mf{M} ^{1\times d}$ obey $\Delta _1 (\Xi ) =d$ and $\| \Xi \| _{\mr{row}} <r$. Then $H(\Xi)$ is non-singular, \emph{i.e.} it has no kernel or co-kernel.
\end{thm}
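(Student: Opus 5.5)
The plan is to reduce Theorem \ref{main1} to Proposition \ref{invatzero}, which already handles the case where $H$ is invertible at the origin. Since $H$ is full, Proposition \ref{realatY} and \cite[Lemma 5.8]{AMS-ncentire} give a matrix point $Y \in r\cdot \bdn$ with $H(Y)$ invertible. We may take $Y$ as close to $0$ as we like, so that $\| \Xi \|_{\mr{row}} + \| Y\|_{\mr{row}} < r$. The idea, as in the Remark preceding Lemma \ref{lem:matrix_delta}, is to ``subtract away $Y$'' by passing to the free product $\mf{N} = \mf{M} *_\C \C^{n\times n}$ and its corner $\mf{N}_0 = e_{11}\mf{N} e_{11}$.

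First, form the $dn^2$-tuple $\Ups^{(k)}_{i,j} = e_{1i}\Xi_k e_{j1} - y^{(k)}_{i,j} I$ in $\mf{N}_0$. By Proposition \ref{thm:delta_equality}, $\Delta_1(\{e_{1i}\Xi_k e_{j1}\}) = dn^2$. Subtracting scalars does not change any commutator $[T, \cdot\,]$, so we also get $\Delta_1(\Ups) = dn^2$.

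Next, under $\Theta$ the operator $\Omega_k - Y^{(k)}\otimes I = \sum_{i,j} E_{ij}\otimes \Ups^{(k)}_{i,j}$. For $X$ near $0$ in the $dn^2$-variable NC universe, set
\[
\wt{H}(X) := H\Big( \big( \textstyle\sum_{i,j} E_{ij}\otimes X^{(k)}_{i,j} \big)_k + Y\otimes I \Big).
\]
This is an $\ell n \times \ell n$ matrix of NC functions in $dn^2$ variables, analytic on a row ball of some radius $r'$. It satisfies $\wt H(0) = H(Y)$, which is invertible. Concretely, the matrix-centre FM realization of $H$ about $Y$ from Proposition \ref{realatY} is, after relabelling the linear maps $A_k(E_{ij})$ as coefficients of new variables, an ordinary FM realization of $\wt H$ about $0$. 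Moreover, $\Theta(H(\Xi)) = \wt H(\Ups)$, up to the identification $\mf{N} \cong \C^{n\times n}\otimes \mf{N}_0$ and the trace-preserving embedding $\iota$ of $\mf{M}$.

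We then apply Proposition \ref{invatzero} to $\wt H$ and $\Ups$ with $\la = 0$. Since $0 \notin \sigma(\wt H(0))$, we conclude that $0 \notin \sigma_p(\wt H(\Ups))$ and $0 \notin \sigma_p(\wt H(\Ups)^*)$. So $\wt H(\Ups)$ has trivial kernel and co-kernel, and hence so does $H(\Xi)$ after transporting back through $\Theta\circ\iota$. Because $\iota$ is a trace-preserving normal embedding, injectivity of $\iota(H(\Xi))$ on $L^2(\mf{N})^\ell$ implies injectivity of $H(\Xi)$ on $\cJ^\ell$: by finiteness, a nonzero kernel projection in $\mf{M}^{\ell\times\ell}$ maps to a nonzero kernel projection.

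The main obstacle is the norm hypothesis in Proposition \ref{invatzero}. It needs a realization of $\wt H$ about $0$ with $\|\wt A\|_{\mr{col}}\cdot\|\Ups\|_{\mr{row}}<1$. The proof of that proposition only uses the inequality $\|A\|_{\mr{col}}\|\Xi\|_{\mr{row}}<1$, which guarantees that the pencils involved are invertible and that the evaluation agrees with the power series. So I would check that the estimate $\|A\|_{\mr{col}}(\|\Xi\|_{\mr{row}}+\|Y\|_{\mr{row}})<1$ from Proposition \ref{realatY} gives exactly this. Indeed, $L_A(T\otimes\Xi - Y\otimes I)$ is the pencil of the relabelled realization evaluated at $\Ups$, and its perturbation term has norm at most $\|A\|_{\mr{col}}\,\|\Xi - Y\otimes I\|_{\mr{row}} < 1$. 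If the row norm of $\Ups$ computed directly is awkward, I would instead rerun the proof of Proposition \ref{invatzero} verbatim with the matrix-centre realization and Proposition \ref{Schursing}. Theorem \ref{zerocomp} would then be applied to the monic pencil in the $\Ups$ variables, which is legitimate because $\Delta_1(\Ups)=dn^2$.
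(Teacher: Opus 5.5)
Your argument is correct, but it is organized differently from the paper's.

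\emph{The paper's route.} It keeps the $d$-variable structure as long as it can. It evaluates the realization of $H$ about $Y$ at $I_n\otimes\Xi$. It then applies Theorem \ref{zerocomp} (with $\Delta_1(\Xi)=d$) to the pencil $L_{A^{(-1)}}(I_n\otimes\Xi-Y\otimes I_\cJ)$, which is affine in $\Xi$ with constant coefficient $L_{A^{(-1)}}(-Y)$. From $Q L_{A^{(-1)}}(-Y) Q=0$ and the finite rank of $B(Y)D^{-1}C$, it cuts the realization down to a finite-dimensional piece $\nbran Q$. Only this finite-dimensional monic pencil is relabelled in $dn^2$ variables and evaluated at $\Ups$. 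The final step is therefore Proposition \ref{invatzero} (or \cite[Theorem 1.1]{freefield}) applied to a full linear, hence entire, matrix, so no radius or norm condition ever has to be checked.

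\emph{Your route.} You relabel the whole realization of $H$ about $Y$ as an FM realization of $\wt H$ about $0$ and rerun Proposition \ref{invatzero} directly at $\Ups$. This removes the finite-dimensionality step and the separate use of Theorem \ref{zerocomp} in $d$ variables. The price is that you must verify the norm condition by hand.

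That verification works if you track the operator norm of the perturbation, not a product of $dn^2$-column and $dn^2$-row norms; the latter product need not be $<1$. With $A_k(G)=L_a(Y)^{-1}(a_k\otimes G)$ as in Proposition \ref{realatY}, one has
$\sum_{k,i,j}A_k(E_{ij})\otimes\Ups^{(k)}_{ij}=(L_a(Y)^{-1}\otimes I)\sum_k a_k\otimes(\Om_k-Y^{(k)}\otimes I_{\cK_0})$.
Its norm is at most $\|L_a(Y)^{-1}\|\,\|a\|_{\mr{col}}(\|\Xi\|_{\mr{row}}+\|Y\|_{\mr{row}})$, which is $<1$ for $Y$ small. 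This bound also survives every compression by $P\otimes I$ in the proof.

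One correction to your last paragraph: the pencil of the relabelled realization at $\Ups$ is $L_A(\Om-Y\otimes I_{\cK_0})$, not $L_A(I_n\otimes\Xi-Y\otimes I)$. These are different operators, because $\Theta\circ\iota$ sends $\Xi_k$ to $\Om_k$, not $I_n\otimes\Xi_k$ to $\Om_k$. Keeping the evaluation point equal to $\Om=\Theta(\Xi)$ throughout, as you do everywhere else, is what makes $\Theta(H(\Xi))=\wt H(\Ups)$ an honest identity. This is a genuine advantage of your route: the paper works at $I_n\otimes\Xi$ in its first half and at $\Om$ in its second, and has to pass between the two at the end.
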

\begin{proof}
By \cite[Lemma 5.8, Corollary 5.9]{AMS-ncentire}, since $H$ is full, there exists an $n \in \N$ so that the set of all $Y \in r \cdot \bdn$ obeying that $H(Y)$ is invertible is analytic-Zariski dense. By Proposition \ref{odrreal} and Proposition \ref{realatY}, we can choose $Y \in r \cdot \bdn$ with sufficiently small row-norm so that $H(Y)$ is invertible and $H \sim _Y (A,B,C,D)$ with 
$$ \| A \| _{\mr{col}} \cdot ( \| \Xi \| _{\mr{row}} + \| Y \| _{\mr{row}} ) < 1. $$ Here, recall from the construction of Proposition \ref{realatY}, that if we assume each matrix-entry $H _{i,j} \sim _0 (a^{(i,j)}, b^{(i,j)}, c^{(i,j)}) \in \scr{B} (\cH) ^{d} \times \cH \times \cH$, then each $A_j : \C ^{n \times n} \rightarrow \scr{B} (\cH ^{n \ell^2})$, $B _j : \C ^{n \times n} \rightarrow \cH ^{n \ell ^2}$, $C : \cH ^{n \ell ^2} \rightarrow \C ^{n \ell}$ and $D \in \C ^{n\ell \times n \ell}$. 

Suppose that $H (\Xi)$ has non-trivial kernel and (hence) co-kernel. 
Then, for any $x \in \nbker H(\Xi) \subseteq \cJ ^\ell$, and any $v \in \C ^n$, 
\[
f(v) :=  L_A (I_n \otimes \Xi - Y \otimes I_\cJ) ^{-1} B (I_n \otimes \Xi - Y \otimes I_{\cJ}) v \otimes x,
\]
belongs to the kernel of $L _{A^{(-1)}} (I_n \otimes \Xi - Y \otimes I_\cJ )$, where $H^{-1} \sim _Y (A^{(-1)}, B^{(-1)}, C^{(-1)}, D^{-1})$.
Take $P_{f(v)}\in \scr{B}(\cH^{n \ell^2})$ to be the support projection of $f(v) \in \cH ^{n\ell ^2} \otimes \cJ$ and define 
\[
    P := \bigvee _{\substack{v \in \C ^n \\ x \in \nbker H (\Xi)}} P _{f(v)}.
\]
Compress the realization $(A,B,C,D)_Y$ by $P$ to obtain $H' \sim _Y (A', B', C', D)$, where $A' _j : \C^{n\times n} \rightarrow \scr{B} (\cH')$, $\cH ' := \nbran P$ is defined by 
\[
    A' _j (G) := P A_j (G) | _{\cH' = \nbran P},
\]
$B' _j (G) := P B_j$, $C' := C | _{\cH'}$ and $D = H(Y)$. 
Then, firstly since 
\[
    \colnorm{A'}\leq \colnorm{A},
\]
we still have that $\colnorm{A'} \cdot \rownorm{I_n \otimes \Xi - Y \otimes I_\cJ} <1$, so that $H' (I_n \otimes \Xi)$ is well-defined. 
Moreover, $H' (Y) = D = H(Y)$ is invertible, so that $(H') ^{-1} \sim _Y (A^{'(-1)}, B^{'(-1)}, C^{'(-1)}, D ^{-1})$, where 
\[
    A^{'(-1)} _j (G) 
        = A' _j (G) - B' _j (G) D^{-1} C' 
        = P A^{(-1)} (G)| _{\cH' = \nbran P}.
\]
Hence, since $P \otimes I_\cJ f(v) = f(v)$ for every $v \in \C^n$, we have that 
\[
    \nbker L_{A^{'(-1)}} (I_n \otimes \Xi - Y \otimes I_\cJ ) 
        \supseteq \nbker L_{A^{(-1)}} (I_n \otimes \Xi - Y \otimes I_\cJ ) 
        \supseteq \bigvee _{v\in \C^n} f(v).
\]
Indeed, by Proposition \ref{Schursing}, since $\Phi ^{-1} : \nbker H (I_n \otimes \Xi) \twoheadrightarrow \nbker L_{A^{(-1)}} (I_n \otimes \Xi - Y \otimes I_\cJ )$ is a linear isomorphism defined by 
$$ \Phi ^{-1} =  L_A (I_n \otimes \Xi - Y \otimes I_\cJ) ^{-1} B (I_n \otimes \Xi - Y \otimes I_{\cJ ^\ell}) | _{\nbker H (I_n \otimes \Xi)}, $$ with inverse
$$ \Phi = -D ^{-1} C \otimes I_\cJ | _{\nbker L_{A ^{(-1)}} (I_n \otimes \Xi - Y \otimes I_\cJ)}, $$
we have that $f(v) = \Phi ^{-1} v\otimes x$. Moreover, if we consider the corresponding linear isomorphism $\Phi ' : \nbker L_{A ^{'(-1)}} (I_n \otimes \Xi - Y \otimes I_\cJ) \twoheadrightarrow \nbker H' (I_n \otimes \Xi)$, we obtain that \ba \Phi ' & = & - D^{-1} C' \otimes I_\cJ | _{\nbker L_{A ^{'(-1)}} (I_n \otimes \Xi - Y \otimes I_\cJ)} \\
& = & -D^{-1} CP \otimes I_\cJ |_{\nbker L_{A ^{'(-1)}} (I_n \otimes \Xi - Y \otimes I_\cJ)} \\
& = & - D^{-1} C \otimes I_\cJ |_{\nbker L_{A ^{'(-1)}} (I_n \otimes \Xi - Y \otimes I_\cJ)}. \ea

Since,
\ba L_{A ^{'(-1)}} (I_n \otimes \Xi - Y \otimes I_\cJ) f(v) & = & P L_{A ^{(-1)}} (I_n \otimes \Xi - Y \otimes I_\cJ) P \otimes I_\cJ f(v)  \\
& = & P \otimes I_\cJ L_{A ^{(-1)}} (I_n \otimes \Xi - Y \otimes I_\cJ) f(v) =0, \ea 
we have that each $f(v) = \Phi ^{-1} x \otimes v \in \nbker  L_{A ^{'(-1)}} (I_n \otimes \Xi - Y \otimes I_\cJ)$ and hence 
$$ \Phi ' f(v) = \Phi ' P \otimes I_\cJ f(v) = \Phi f(v) = \Phi \circ \Phi ^{-1} v \otimes x = v \otimes x \in \nbker H' (I_n \otimes \Xi). $$ 
That is, 
$$ \nbker  L_{A ^{'(-1)}} (I_n \otimes \Xi - Y \otimes I_\cJ) \supseteq \nbker  L_{A ^{(-1)}} (I_n \otimes \Xi - Y \otimes I_\cJ), \quad \mbox{and} \quad \nbker H ' (I_n \otimes \Xi ) \supseteq \nbker H (I_n \otimes \Xi), $$ are both non-trivial.

Since $\mf{M} ^{n \times n}$ is a finite von Neumann algebra, $\nbker H' (I_n \otimes \Xi ) ^* \neq \{ 0 \}$ and again, by Proposition \ref{Schursing}, $$\nbker L_{A^{'(-1)}} (I_n \otimes \Xi - Y \otimes I_\cJ ) ^* = (\Psi ' ) ^{-1} \nbker H' (I_n \otimes \Xi ) ^* \neq \{ 0 \}, $$ is non-trivial, where $\Psi ' : \nbker L_{A^{'(-1)}} (I_n \otimes \Xi - Y \otimes I_\cJ ) ^* \twoheadrightarrow \nbker H' (I_n \otimes \Xi ) ^*$ is the linear isomorphism between the co-kernels of Proposition \ref{Schursing}. Let $Q$ be the support projection of the co-kernel of $L_{A^{'(-1)}} (I_n \otimes \Xi - Y \otimes I_\cJ )$, \emph{i.e.} the span of all the support projections of each vector in the co-kernel. Then $Q \neq 0$ and $Q \in \scr{B} (\cH')$, so that we can view $Q \in \scr{B} (\cH ^{n \ell ^2})$ with $Q \leq P$ since $\cH' = \nbran P$. Then, we obtain that 
$$ 0 = Q L_{A^{'(-1)}} (-Y) P = Q L_{A^{(-1)}} (-Y) P, $$ and hence, since $Q \leq P$,
$$ 0  =  Q L_{A^{(-1)}} (-Y) Q = Q L_A (-Y) Q -QB(Y) D^{-1} C Q. $$
Equivalently, 
$$ L_{QA|_{\nbran Q}} (-Y) = Q B(Y) D^{-1} C | _{\nbran Q}. $$ 
However, $0_n$, is in the $Y-$invertibility domain of the realization $(QA|_{\nbran Q}, QB, C | _{\nbran Q}, D)$, since $\| Q A | _{\nbran Q} \| _{\mr{col}} \| Y \| _{\mr{row}} \leq \| A \| _{\mr{col}} \cdot \| Y \| _{\mr{row}} < 1$, which is to say that $L_{QA|_{\nbran Q}} (-Y)$ is invertible as a linear operator on $\nbran Q$, while $Q B(Y) D^{-1} C | _{\nbran Q}$ has rank at most $n$, since $C : \cH ^{n \ell ^2} \rightarrow \C ^n$, and $B_j (Y) : \C^n \rightarrow \cH ^{n \ell ^2}$. It follows that $\nbdim \nbran Q \leq n$. Also note that, by similar arguments to above, using Proposition \ref{Schursing},
$$ \nbker L_{Q A ^{(-1)} | _{\nbran Q}} ( I_n \otimes \Xi - Y \otimes I_\cJ) ^* \supseteq \nbker L_{P A ^{(-1)} | _{\nbran P}} ( I_n \otimes \Xi - Y \otimes I_\cJ) ^* \neq \{ 0 \}, $$ and so we obtain that $L_{Q A ^{(-1)} | _{\nbran Q}} ( I_n \otimes \Xi - Y \otimes I_\cJ)$ has non-trivial co-kernel and hence kernel by Proposition \ref{Schursing} again. Indeed, let $(A'', B'', C'', D)_Y$ be the realization obtained by compressing the realization $(A',B',C',D)_Y$ by $Q$ and let $H'' \sim _Y (A'', B'', C'', D)$ denote the corresponding quantized transfer function. Then as before, since $D$ is invertible, the realization for $(H'') ^{-1} \sim _Y (A^{'' (-1)}, B^{''(-1)}, C ^{'' (-1)}, D^{-1})$ is also obtained by compressing the realization of $(H') ^{-1}$ by $Q$.
Hence, for any $g' \in \nbker  L_{A^{'(-1)}} ( I_n \otimes \Xi - Y \otimes I_\cJ) ^*$,
$$ L_{A^{''(-1)}} ( I_n \otimes \Xi - Y \otimes I_\cJ) ^* g'  = Q \otimes I_\cJ L_{A^{'(-1)}} ( I_n \otimes \Xi - Y \otimes I_\cJ) ^* \underbrace{Q \otimes I_\cJ g'}_{=g'} =0. $$ Proposition \ref{Schursing} then implies that $\Psi '' : \nbker L_{A^{''(-1)}} ( I_n \otimes \Xi - Y \otimes I_\cJ) ^* \twoheadrightarrow \nbker H'' (I_n \otimes \Xi ) ^*$, given by 
$$ \Psi '' = - D^{-*} \otimes I_\cJ B '' (I_n \otimes \Xi - Y \otimes I_\cJ) ^*  = - D^{-*} \otimes I_\cJ B '' (I_n \otimes \Xi - Y \otimes I_\cJ) ^* Q \otimes I_\cJ | _{\nbker L_{A^{''(-1)}} ( I_n \otimes \Xi - Y \otimes I_\cJ) ^*}, $$ is a linear isomorphism, so that 
$$ \Psi '' g ' =  \Psi ' Q \otimes I_\cJ g' = \Psi ' g '\in \nbker H'' (I_n \otimes \Xi) ^* \cap H' (I_n \otimes \Xi ) ^*. $$ Again, since $\mf{M} ^{n \times n}$ is a finite von Neumann algebra, $H'' (I_n \otimes \Xi)$ also has non-trivial kernel, and if $0 \neq x'' \in \nbker H'' (I_n \otimes \Xi)$, then $0 \neq f'' = (\Psi '' ) ^{-1} x'' \in \nbker L_{A^{''(-1)}} ( I_n \otimes \Xi - Y \otimes I_\cJ)$.

Now, set $\mf{Z} := (\mf{Z}^{(1)}, \cdots, \mf{Z}^{(d)})$, with $\mf{Z}^{(k)} := ( \mf{z}^{(k)} _{i,j})$, $1 \leq i,j \leq n$ and consider $L_{Q A^{(-1)}| _{\nbran Q}}$ as a monic, affine-linear pencil in the $dn^2$ formal variables $(\mf{z} ^{(k)} _{i,j})$. That is, if 
$$ Z ^{(k)} := \sum _{i,j =1} ^n \underbrace{E_{i,j}}_{\in \C ^{n \times n}} \otimes \underbrace{Z^{(k)} _{i,j}}_{\in \C ^{m \times m}} \in \C ^{mn \times mn}; \quad  \quad 1 \leq k \leq d, $$ then we define
\ba L_{\hat{A}} ( ( Z^{(k)} _{i,j})) & := &  L_{Q A^{(-1)} | _{\nbran Q}} ( Z^{(1)}, \cdots, Z^{(d)} ) \\
& = & I_{\nbran Q} \otimes I_m - \sum _{k=1} ^d \left( Q A^{(-1)} _k |_{\nbran Q} \otimes \mr{id} _m \right) \circ \left( \sum _{i,j=1} ^n E_{i,j} \otimes Z^{(k)} _{i,j} \right) \\
& = & I_{\nbran Q} \otimes I_m - \sum _{\substack{1\leq k \leq d \\ 1 \leq i,j \leq n}} Q A^{(-1)} _k (E_{i,j})|_{\nbran Q} \otimes Z^{(k)} _{i,j}.  \ea 
That is, $\hat{A} :=  \mr{col}(\hat{A} ^{(k)} _{i,j}) _{\substack{1\leq k \leq d \\ 1 \leq i,j \leq n}}$ is the $dn^2$ column-tuple with components, $\hat{A} ^{(k)} _{i,j} := Q A^{(-1)} _k (E_{i,j})|_{\nbran Q}$ in $\scr{B} (\nbran Q)$, and we identify the $d-$tuple $Z = (Z^{(1)}, \cdots, Z^{(d)}) \in \C ^{(mn \times mn) \cdot d}$ with the $dn^2-$tuple $(Z^{(k)} _{i,j}) \in \C ^{(m \times m) \cdot dn^2}$. 

Hence $L_{\hat{A}} (0) = I _{\nbran Q}$ is invertible. We now apply the previous results of this section. Namely, by Proposition \ref{thm:delta_equality}, the $dn^2-$tuple,
$$ (F ^{(k)} _{i,j} ) = (e_{1,i} \Xi _k e_{j,1}) _{\substack{1 \leq i,j \leq n, \\ 1 \leq k \leq d}}; \quad \quad F^{(k)} _{i,j} \in \mf{N} _0 = e_{11} \mf{N} e_{11}, \quad \mf{N} = (\mf{M}, \tau) * _\C (\C ^{n \times n}, \mr{tr} _n),$$ has maximal $\Delta _1 (F) = dn^2$. Since subtracting constant multiples of the identity does not affect $\Delta _1$, the $dn^2-$tuple, 
$$ \Ups = (\Ups ^{(1)}, \cdots, \Ups ^{(d)} ) := \left( e_{1,i} \Xi _k e_{j,1} - y^{(k)} _{i,j} 1_{\mf{N}_0}\right) _{\substack{1 \leq i,j \leq n, \\ 1 \leq k \leq d}} \in \mf{N} _0 ^{1 \times dn^2}, $$ also has maximal $\Delta _1 (\Ups ) = dn^2$, where we write
$$ Y = ( Y^{(1)}, \cdots, Y ^{(k)} ) \in r \cdot \bdn; \quad \quad Y^{(k)} = \sum _{i,j} ^n E_{i,j} \, y^{(k)} _{i,j} \in \C ^{n \times n}.$$  Moreover, $L_{\hat{A}} (\fz)$ is full as an affine linear matrix over $\C \langle \fz _1, \cdots, \fz _{dn^2} \rangle$, since it is monic and finite-rank. (Since the embeddings $\scr{O} _d (r) \hookrightarrow \fps$  and $\fp \hookrightarrow \fps$ are honest, it further follows that $L_{\hat{A}} (\fz)$ is full over $\scr{O} _{dn^2} (r)$ for any $r \in (0, +\infty]$.) Hence,
$$ L_{\hat{A}} \left( (e_{1,i} \Xi _k e_{j,1} - y ^{(k)} _{i,j} I_{\cK _0} )  \right) = L_{\hat{A}} ( F-(y ^{(k)} _{i,j} I_{\cK _0}) ) = L_{\hat{A}} ( \Ups ), $$ where $\cK _0 := L^2 (\mf{N} _0, \psi _0)$ is the tracial GNS space of $\mf{N} _0$, has no kernel or co-kernel by Proposition \ref{invatzero}, since it is a full matrix over $\scr{O} _{dn^2} (r)$ for any $r>0$. (Alternatively, this follows from \cite[Theorem 1.1 (ii)]{freefield}, since this is also a full affine-linear matrix over $\C \langle \fz _1, \cdots, \fz _{dn^2} \rangle$ and we assume that the $d-$tuple $\Xi$ has maximal $\Delta _1$. Since $\Delta \geq \Delta _1$, it must also have maximal $\Delta (\Xi ) =d$ and hence $\Delta (\Ups )=dn^2$ as well.) However,
\ba L_{\hat{A}} (\Ups ) & = &  I _{\nbran Q} \otimes I_{\cK _0} - \sum _{k=1} ^{d} \sum _{i,j =1} ^n Q A^{(-1)} _k (E_{i,j}) | _{\nbran Q} \otimes \left(F^{(k)} _{i,j} -   y^{(k)} _{i,j}  I_{\cK _0} \right) \\ 
&= & I _{\nbran Q} \otimes I_{\cK _0} - \sum _{k=1} ^d \left( Q A^{(-1)} _k (\cdot) | _{\nbran Q} \otimes \mr{id} _{\cK_0} \right) \circ \left( \sum _{i,j =1} ^n E_{i,j} \otimes F^{(k)} _{i,j} - Y ^{(k)} \otimes I_{\cK _0} \right) \\
& = & L _{Q A^{(-1)}| _{\nbran Q}} \left( \Om - Y \otimes I_{\cK _0} \right), \ea where $\Om = (\Om _1, \cdots, \Om _d)$, $\Om _k \in \C ^{n \times n} \otimes \mf{N} _0$ is as defined in Equation (\ref{Omdef}), then has no kernel or co-kernel. 
Since there is a trace-preserving $*-$isomorphism $\Theta \circ \iota : \mf{M} \rightarrow W^* (\Om)$, obeying $\Theta \circ \iota (\Xi _k) = \Om _k$, it follows that $L _{Q A^{(-1)}| _{\nbran Q}} ( I_n \otimes \Xi - Y \otimes I_\cJ )$ has no kernel or co-kernel, contradicting our earlier assertion.
\end{proof}

We now wish to consider evaluations of elements $f \in \scr{M} _d (r)$ at tuples $\Xi \in \mf{M} ^{1\times d}$ or even tuples in $\wt{\mf{M}}$. By \cite[Section 7.4]{Cohn}, elements $f \in \scr{M} _d (r)$ can be identified with certain equivalence clases of `displays', $f \cong \bsm A & B \\ C & D \esm$, where $A \in \scr{O} _d (r) ^{\ell \times \ell}$ is full, $B \in \scr{O} _d (r) ^\ell$, $C \in \scr{O} _d (r) ^{1 \times \ell}$ and $D \in \scr{O} _d (r)$, $\ell \in \N$, and where $f$ is the \emph{transfer function} of this display, 
$$ f (\fz ) = D (\fz) - C (\fz) A(\fz) ^{-1} B (\fz) \in \scr{M} _d (r),$$
that is, $f$ is the Schur complement of the display $M (\fz) = \bsm A(\fz ) & B (\fz) \\ C (\fz) & D (\fz) \esm$ by the invertible upper left block, $A (\fz)$, $f (\fz) = M (\fz) / A (\fz)$.
We will typically write $f \cong \bsm A & B \\ C & D \esm$, with the symbol, $\cong$, to avoid confusion with an FM affine-linear representation (\emph{i.e.} realization) of $f$. By \cite[Corollary 5.9 and Remark 5.10]{AMS-ncentire}, two such displays,
$$ f (\fz ) = D (\fz) -  C(\fz) A (\fz ) ^{-1} B (\fz) = D' (\fz)  - C' (\fz) A' (\fz) ^{-1} B' (\fz ) =: f' (\fz), $$
are equal as elements of $\scr{M} _d (r)$, if and only if their evaluations in the NC universe agree, wherever they are both defined. We will write $\bsm A & B \\ C & D \esm \cong \bsm A' & B' \\ C' & D ' \esm$ and $f \cong \bsm A & B \\ C & D \esm$, if these two displays represent the same element, $f \in \scr{M} _d (r)$. However, given two such displays that represent the same element, it is not obvious, a priori, that the evaluations of these displays need to agree in any stably finite algebra for which the evaluation of both displays is defined. Namely, let $\scr{A}$ be a unital, stably-finite algebra over $\C$. Given $f \in \scr{M} _d (r)$ represented by a display $\bsm A & B \\ C & D \esm$, as above, and given a $d-$tuple $a = (a_1, \cdots, a_d ) \in \scr{A} ^{1 \times d}$, we will say that $a$ belongs to the $\scr{A}-$domain, $\scr{D} _{\scr{A}} (A)$, of the display, if (i) the evaluation $\mr{ev} _a : \fpd \rightarrow \scr{A}$ defined by $\fz _j \mapsto a_j$ extends to a unital homomorphism, $\mr{ev} _a : \scr{O} _d (r) \rightarrow \scr{A}$, and if (ii) $A (a) := \mr{ev} _a (A)$ is invertible over $\scr{A}$.
If $a \in \scr{D} _\scr{A} (A)$, we then define
$$ f (a) := D (a) - C(a) A(a) ^{-1} B(a) \in \scr{A}, $$
where we write $D(a) := \mr{ev} _a (D)$, et cetera. 

This next result is a version of \cite[Theorem 4.4]{freefield} and \cite[Theorem 6.1]{AMS-ncentire}. The former result on evaluation of NC rational functions in stably finite algebras is part of a result originally due to Cohn \cite[Theorem 7.3.2]{Cohn2}.

\begin{thm} \label{evalthm}
Let $\bsm A & B \\ C & D \esm$ and $\bsm A' & B' \\ C ' & D ' \esm$ be displays that represent the same element, $f \in \scr{M} _d (r)$, and let $\scr{A}$ be a unital, stably-finite algebra over $\C$. Then for any $a = (a_1, \cdots, a_d ) \in \scr{D} _\scr{A} (A) \cap \scr{D} _\scr{A} (A')$,
$$ D(a) - C(a) A(a) ^{-1} B(a) = D' (a) -  C' (a) A' (a) ^{-1} B' (a). $$ That is, $f$ is well-defined on 
$$ \mr{Dom} _\scr{A} (f) :=  \bigcup \left\{ \scr{D} _\scr{A} (A) \left| \  f \cong \bsm A & B \\ C &D \esm; \ A \in \scr{O}_d (r) ^{\ell \times \ell}, \ \rho (A) = \ell, \ B \in \scr{O} _d (r) ^{\ell}, C \in \scr{O} _d (r) ^{1 \times \ell}, \ D \in \scr{O} _d (r)  \right. \right\}. $$ 
\end{thm}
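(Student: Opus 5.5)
We follow Cohn's classical argument for evaluating rational expressions in stably finite algebras (as in \cite[Theorem 4.4]{freefield} and \cite[Theorem 6.1]{AMS-ncentire}). The strategy is to reduce equality of two displays in $\scr{M}_d(r)$ to non-fullness of a single block matrix over $\scr{O}_d(r)$. That factorization is purely algebraic, so it transports to $\scr{A}$ under the unital homomorphism $\mr{ev}_a$, where stable finiteness then forces the two evaluations to agree.

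\textbf{Step 1.} Form the $(\ell+\ell'+1)\times(\ell+\ell'+1)$ matrix over $\scr{O}_d(r)$,
\[
N := \bpm A & 0 & B \\ 0 & A' & B' \\ C & -C' & D - D' \epm .
\]
Its Schur complement with respect to the invertible (over $\scr{M}_d(r)$) block $A\oplus A'$ is $f - f' = 0$. Since $A\oplus A'$ is full and the complement vanishes in the skew field, $N$ has inner rank exactly $\ell+\ell'$ over $\scr{M}_d(r)$. Because $\scr{O}_d(r)$ is a semifir and $\scr{M}_d(r)$ is its universal field of fractions, Cohn's theory \cite[Section 7]{Cohn} says the inner rank over $\scr{O}_d(r)$ equals the inner rank over $\scr{M}_d(r)$. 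Hence $N$ is non-full over $\scr{O}_d(r)$, so $N = PQ$ with $P$ of size $(\ell+\ell'+1)\times(\ell+\ell')$ and $Q$ of size $(\ell+\ell')\times(\ell+\ell'+1)$, both over $\scr{O}_d(r)$.

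\textbf{Step 2.} Apply $\mr{ev}_a$, which is defined on all of $\scr{O}_d(r)$ by hypothesis (i). This gives $N(a) = P(a)Q(a)$ over $\scr{A}$. Row and column operations with the inverse of $(A\oplus A')(a)$, which exists by hypothesis (ii), show that $N(a)$ is equivalent to $(A\oplus A')(a) \oplus (f(a)-f'(a))$. So with $m = \ell+\ell'$ we have invertible $U,V$ over $\scr{A}$ with
\[
\bpm I_m & 0 \\ 0 & f(a)-f'(a) \epm = \tilde P \tilde Q, \quad \tilde P := U P(a) \in \scr{A}^{(m+1)\times m}, \ \tilde Q := Q(a) V \in \scr{A}^{m\times(m+1)} .
\]

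\textbf{Step 3 (the crux).} Write $\tilde P = \bsm P_1 \\ p \esm$ and $\tilde Q = (Q_1 \ q)$ with $P_1, Q_1 \in \scr{A}^{m\times m}$. The identity in Step 2 gives $P_1 Q_1 = I_m$. Stable finiteness then yields $Q_1 P_1 = I_m$, so $P_1$ and $Q_1$ are invertible. The off-diagonal equations are $P_1 q = 0$ and $pQ_1 = 0$, which force $q = 0$ and $p = 0$. Reading off the remaining corner gives $f(a) - f'(a) = pq = 0$.

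This is where stable finiteness is essential, and I expect it to be the only delicate point. The other subtle step is the inner-rank equality in Step 1, which I would cite from Cohn rather than reprove, using that $\scr{O}_d(r)$ is a semifir \cite[Theorem 4.3]{AMS-ncentire}. Well-definedness of $f$ on $\mr{Dom}_\scr{A}(f)$ is then immediate.
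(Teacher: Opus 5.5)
Your proposal is correct and follows the paper's proof: the same block matrix (with the minus sign placed on $C'$ rather than $B'$), vanishing of its Schur complement over $\scr{M}_d(r)$, transfer of non-fullness to $\scr{O}_d(r)$ via the universal localization, and evaluation of the resulting factorization at $a$. The only differences are that you argue directly rather than by contradiction, and that in Step 3 you verify by hand the stable-finiteness fact that $I_m \oplus s$ cannot factor through $m$ columns unless $s=0$, which the paper instead cites as \cite[Proposition 5.4.6]{Cohn}.
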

\begin{proof}
Suppose that $f \in \scr{M} _d (r)$ and $f \cong \bsm A & B\\ C & D \esm \cong \bsm A' & B' \\ C ' & D' \esm$, where $\bsm A & B \\ C & D \esm$ is of size $\ell$ and $\bsm A' & B'  \\ C' & D' \esm$ is of size $\ell '$. Further assume that there exists $a = (a_1, \cdots, a_d ) \in \scr{D} _\scr{A} (A) \cap \scr{D} _{\scr{A}} (A')$ so that 
$$ D(a) - C(a) A(a) ^{-1} B(a) \neq D' (a) - C' (a) A' (a) ^{-1} B' (a). $$ Consider the block matrix, 
$$ M := \bpm A& 0 & B  \\ 0 & A' & -B' \\ C & C' & D -D' \epm  \in \scr{O} _d (r) ^{(\ell + \ell ' +1) \times (\ell + \ell ' +1 )}. $$ Then, since $a \in \scr{D} _\scr{A} (A) \cap \scr{D} _\scr{A} (A')$, it follows that the upper left block,
$$ \bpm A (a) & 0 \\ 0 & A' (a) \epm \in \scr{A} ^{(\ell + \ell') \times (\ell + \ell ')}, $$ is invertible over $\scr{A}$. Moreover, the Schur complement of $M (a)$ by this block is 
\ba &  & D' (a) - D(a) - (C (a), C' (a)) \bpm A(a) ^{-1} & 0 \\ 0 & A' (a) ^{-1} \epm \bpm B(a) \\ -B' (a) \epm \\
& = & D(a) - C(a) A(a) ^{-1} B (a) - D' (a) + C' (a) A' (a) ^{-1} B' (a) \neq 0, \ea  
so that by \cite[Proposition 5.4.6]{Cohn}, the inner rank of $M(a)$ over $\scr{A}$ is $\rho _\scr{A} (M(a)) = \ell + \ell ' +1$. However, $\scr{M} _d (r)$ is a skew field, hence stably-finite, so that as a matrix over $\scr{M} _d (r)$, $M (\fz)$ is still such that the upper left block, $\bsm A (\fz) & 0 \\ 0 & A' (\fz) \esm$ is invertible, but now the Schur complement by this upper left block is 
$$  D(\fz) - C(\fz) A(\fz) ^{-1} B (\fz) - \left( D' (\fz) - C' (\fz) A' (\fz) ^{-1} B' (\fz) \right) = f(\fz) - f (\fz) = 0,$$ and 
\cite[Proposition 5.4.6]{Cohn} now implies that the inner rank of $M(\fz)$ over $\scr{M} _d (r)$ is $\ell + \ell ' < \ell + \ell ' +1$. Since $\scr{O} _d (r)$ is a semifir and $\scr{M} _d (r)$ is its universal skew field of fractions, the embedding $\scr{O} _d (r) \hookrightarrow \scr{M} _d (r)$ is honest and hence inner-rank preserving by \cite[Proposition 5.4.10]{Cohn}, and we conclude that $M(\fz)$ has inner rank equal to $\ell + \ell'$ over $\scr{O} _d (r)$. Hence, $M(\fz)$ has the inner-rank factorization $M(\fz) = M'(\fz) M'' (\fz)$ over $\scr{O} _d (r)$, with $M'(\fz) \in \scr{O} _d (r) ^{(\ell + \ell' +1) \times (\ell + \ell ')}$ and $M'' (\fz) \in \scr{O} _d (r) ^{(\ell + \ell ') \times (\ell + \ell ' +1)}$. Evaluation at $a \in \scr{A} ^{1\times d}$ (which is valid for matrices over $\scr{O} _d (r)$, by assumption) then yields $M (a) = M'(a) M''(a)$, so that $\rho _\scr{A} (M(a)) = \ell + \ell'$ contradicting that $\rho _\scr{A} (M(a)) = \ell + \ell ' +1$. 
\end{proof}

\begin{remark}
The topological $*-$algebra, $\wt{\mf{M}}$, of closed linear operators affiliated to the finite von Neumann algebra, $\mf{M}$, is stably-finite by \cite[Lemma 5.3]{freefield}, and so the previous theorem applies to potential evaluations of elements of $\scr{M} _d (r)$ at tuples $\Xi \in \wt{\mf{M}} ^{1\times d}$, provided that this evaluation extends to a homomorphism on $\scr{O} _d (r)$. Certainly if $\Xi \in \mf{M} ^{1 \times d}$ obeys $\| \Xi \| _{\mr{row}} < r$, then the homomorphism $\mr{ev} _\Xi : \scr{O} _d (r) \rightarrow \mf{M}$ is well-defined by \cite[Theorem 1.1]{Pop-freeholo}, since any $f \in \scr{O} _d (r)$ has radius of convergence at least $r$. 
\end{remark}

The following is an immediate corollary of Theorem \ref{main1} and Theorem \ref{evalthm}, and it generalizes (v) $\Rightarrow$ (i) of \cite[Theorem 1.1]{freefield}.

\begin{cor}\label{cor:embedding_skew_field}
Let $\Xi \in \mf{M} ^{1\times d}$ with $\|\Xi\|_\mathrm{row}<r$ and $\Delta_1 (\Xi) =d$. Then the homomorphism $\mathrm{ev}_\Xi:\scr{O} _d (r) \rightarrow \mf{M}$ given by evaluation at $\Xi$ extends to an embedding $\mathrm{Ev}_\Xi:\scr{M} _d (r) \hookrightarrow \wt{\mf{M}}$. 
\end{cor}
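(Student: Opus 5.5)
The proof combines the two preceding theorems along the lines of Cohn's universal localization. First, since $\| \Xi \| _{\mr{row}} <r$, the evaluation $\mr{ev} _\Xi : \scr{O} _d (r) \rightarrow \mf{M} \subseteq \wt{\mf{M}}$ is a well-defined unital homomorphism by the preceding remark. Its matrix amplifications $\scr{O} _d (r) ^{\ell \times \ell} \rightarrow \mf{M} ^{\ell \times \ell}$ are also well defined, and they agree with evaluation at the tuple $\Xi$ acting on $\cJ ^\ell$. Theorem \ref{main1} shows that for every full $A \in \scr{O} _d (r) ^{\ell \times \ell}$, $A(\Xi)$ has no kernel and no co-kernel. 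In a finite von Neumann algebra, such an element of $\mf{M} ^{\ell \times \ell}$ has a closed densely defined inverse affiliated to $\mf{M} ^{\ell \times \ell}$, by polar decomposition. Since $\wt{\mf{M} ^{\ell \times \ell}} = \wt{\mf{M}} ^{\ell \times \ell}$, $A(\Xi)$ is invertible over the $*-$algebra $\wt{\mf{M}}$. Thus $\mr{ev} _\Xi$ is a full-matrix-inverting (``$\Sigma$-inverting'') homomorphism into $\wt{\mf{M}}$.

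Next we define $\mr{Ev} _\Xi$. Any $f \in \scr{M} _d (r)$ has a display $f \cong \bsm A & B \\ C & D \esm$ with $A$ full, and we set $\mr{Ev} _\Xi (f) := D(\Xi) - C(\Xi) A(\Xi) ^{-1} B(\Xi) \in \wt{\mf{M}}$. By the previous step, $\Xi \in \scr{D} _{\wt{\mf{M}}} (A)$ for every display of every $f$. Since $\wt{\mf{M}}$ is stably finite (\cite[Lemma 5.3]{freefield}), Theorem \ref{evalthm} shows that this value does not depend on the display. To see that $\mr{Ev} _\Xi$ is a unital ring homomorphism extending $\mr{ev} _\Xi$, we use Cohn's standard display constructions for sums and products, e.g.
$$ \bpm A & 0 & B \\ 0 & A' & B' \\ C & C' & D + D' \epm \quad \text{and} \quad \bpm A & -B C' & -BD' \\ 0 & A' & B' \\ C & DC' & DD' \epm. $$
Their upper-left blocks are block triangular with full diagonal blocks, hence full over the semifir $\scr{O} _d (r)$; alternatively, fullness can be checked by invertibility at some matrix point via \cite[Lemma 5.8]{AMS-ncentire}. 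The Schur complement identities hold verbatim after evaluation in $\wt{\mf{M}}$, because the evaluated upper-left blocks are invertible there. An element $g \in \scr{O} _d (r)$ has the trivial display $\bsm 1 & 0 \\ 0 & g \esm$, so $\mr{Ev} _\Xi$ extends $\mr{ev} _\Xi$.

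Finally, injectivity is automatic: $\scr{M} _d (r)$ is a skew field, and the kernel of a unital ring homomorphism out of a skew field is a proper two-sided ideal, hence zero. Equivalently, for $f \neq 0$, $\mr{Ev} _\Xi (f) \mr{Ev} _\Xi (f^{-1}) = 1$.

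The main obstacle is the step from ``no kernel or co-kernel'' to ``invertible in $\wt{\mf{M}} ^{\ell \times \ell}$''. One has to note that $\mf{M} ^{\ell \times \ell} \cong M_\ell (\C) \otimes \mf{M}$ is again finite, that injectivity of $A(\Xi)$ on $\cJ ^\ell$ together with dense range gives an affiliated inverse, and that this inverse lies in the same algebra $\wt{\mf{M}} ^{\ell \times \ell}$ in which Theorem \ref{evalthm} is applied. Everything after that is bookkeeping with Cohn's display calculus.
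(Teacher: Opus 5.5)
Your proposal is correct and takes the same route as the paper, which states this corollary as an immediate consequence of Theorem \ref{main1} (full matrices over $\scr{O}_d(r)$ evaluate to operators with no kernel or co-kernel, hence invertible in $\wt{\mf{M}}^{\ell\times\ell}$) and Theorem \ref{evalthm} applied to the stably finite algebra $\wt{\mf{M}}$; you have also filled in the display bookkeeping and the injectivity argument that the paper leaves implicit. One small slip: with your convention $f = D - CA^{-1}B$, the product display you wrote has Schur complement $(D + CA^{-1}B)(D' - C'(A')^{-1}B')$ rather than $ff'$, so its entries $-BC'$ and $-BD'$ should be $BC'$ and $BD'$ (your sum display is fine).
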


The following corollary is an extension of \cite[Theorem 3]{CharShly-analytic} and \cite[Theorem 3.1]{MSW-alg_rel}. We say that an element $f \in \cO_d(r)$ is self-adjoint, if it is invariant under the natural anti-linear involution on power series that keeps the variables invariant. In particular, if $f \in \scr{O} _d (r)$ is self-adjoint and $\Xi \in \mf{M} ^{1\times d}$ is self-adjoint, then $f (\Xi) = f(\Xi) ^* \in \mf{M}$ is self-adjoint.

\begin{cor} \label{cor:max_ent_dim}
    Let $\Xi \in \mf{M}^{1\times d}$ be a $d$-tuple of self-adjoint elements, such that $\|\Xi\|_{\mathrm{row}} < r$ and $\delta^*(\Xi) = d$. Let $f \in \cO_d(r)$ be self-adjoint. Then, for every $p \in \mf{M}$, if $f(\Xi) p = 0$, then $f = 0$. In particular, for every $0 \neq f \in \cO_d(r)$ self-adjoint, $\delta^*(f(\Xi)) = 1$ and $f(\Xi)$ is diffuse.
\end{cor}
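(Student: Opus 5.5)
The plan is to pass through the chain of inequalities $\delta^* \leq \Delta_1 \leq \Delta$, so that $\delta^*(\Xi) = d$ forces $\Delta_1(\Xi) = d$ by \cite[Lemma 1]{CharShly-analytic}. Theorem \ref{main1}, applied with $\ell = 1$, then shows that for any $f \in \scr{O}_d(r)$ that is full, $f(\Xi)$ has trivial kernel and co-kernel. A single element of a domain is full exactly when it is non-zero, since a $1\times 1$ matrix has inner rank $0$ only if it is zero. Hence $0 \neq f$ implies that $f(\Xi)$ is injective.

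For the first assertion, suppose $f(\Xi) p = 0$ and $f \neq 0$. Then $f(\Xi)$ is injective on $\cJ$. Because $f(\Xi) p = 0$, the range of $p$ (acting on $\cJ$) lies in $\nbker f(\Xi) = \{0\}$, so $p = 0$. I read the statement as ``if $f(\Xi)p = 0$ for some non-zero $p$, then $f = 0$''; the contrapositive is exactly this injectivity, and self-adjointness of $f$ is not needed at this step.

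For the second assertion, take $0 \neq f$ self-adjoint. Then $f(\Xi)$ is a self-adjoint element of $\mf{M}$, so its spectral measure $\mu$ with respect to $\tau$ is a compactly supported probability measure on $\R$.

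\begin{itemize}
\item \emph{Diffuseness.} If $\mu$ had an atom at $\lambda$, the spectral projection $p = 1_{\{\lambda\}}(f(\Xi)) \neq 0$ would satisfy $(f(\Xi) - \lambda) p = 0$. The function $f - \lambda$ lies in $\scr{O}_d(r)$, is self-adjoint since $\lambda$ is real, and is non-zero unless $f$ is the constant $\lambda$.
\item \emph{Constants.} If $f = \lambda$ is a non-zero constant, then $f(\Xi) = \lambda \cdot 1$ is not diffuse. So the statement must be understood for non-constant $f$ (or the constant case is trivially excluded). I would note this caveat and treat $f$ as non-constant. The first part applied to $f - \lambda$ then gives a contradiction, so $\mu$ has no atoms.
\item \emph{Free entropy dimension.} For a single self-adjoint element, $\delta^*(X) = 1 - \sum_{t} \mu(\{t\})^2$ by the known computation of Connes--Shlyakhtenko and Mai--Speicher--Weber. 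Diffuseness therefore gives $\delta^*(f(\Xi)) = 1$.
\end{itemize}

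The main obstacle is ensuring Theorem \ref{main1} applies: we need $\Delta_1(\Xi) = d$, which comes from $\delta^* \le \Delta_1$ for self-adjoint tuples, and the row-norm hypothesis $\rownorm{\Xi} < r$, which is given. After that, the argument is only the elementary spectral reasoning above, together with the caveat about constant $f$.
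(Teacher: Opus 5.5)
Your proposal is correct and is the same argument as the paper's. The paper's proof only observes $\delta^*(\Xi)\le\Delta_1(\Xi)$ (Charlesworth--Shlyakhtenko) to get $\Delta_1(\Xi)=d$, then invokes Theorem~\ref{main1} with $\ell=1$, leaving the spectral-projection and single-variable $\delta^*$ steps implicit.

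Your two caveats, which the paper glosses over, are also right. The first assertion only makes sense as ``$f(\Xi)p=0$ with $p\neq 0$ forces $f=0$.'' The ``in particular'' clause must exclude non-zero constants $f=\lambda$, for which $f(\Xi)=\lambda\cdot 1$ is atomic and $\delta^*(f(\Xi))=0$.
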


\begin{proof}
    The result follows immediately from the preceding theorem once we observe that by \cite{CharShly-analytic}, $\delta^*(\Xi) \leq \Delta_1(\Xi)$.
\end{proof}

Following Mai, Speicher and Yin, we introduce the analytic rank of matrices over $\wt{\mf{M}}$.

\begin{defn}\label{def:analytic_rank}
    Let $H\in \wt{\mf{M}}^{\ell\times \ell}$. The \emph{analytic rank} of $H$ is
    \begin{equation*}
        \mathrm{rank}(H)=\mathrm{Tr}_\ell\circ \tau^{(\ell)}(P_{\overline{\mathrm{Im}(H)}}).
    \end{equation*}
    Here, $\mathrm{Tr}_\ell$ denotes the non-normalized trace on $\C^{\ell\times \ell}$, $\tau^{(\ell)}$ is an ampliation of the faithful, normal tracial state $\tau$, and $P_{\overline{\mathrm{Im}(H)}}$ denotes the projection onto the closure of $\mathrm{Im}(H)$.  
\end{defn}

\begin{remark}
    The fact that the  projection $P_{\overline{\mathrm{Im}(H)}}$ from Definition \ref{def:analytic_rank} is an element of $\mf{M}^{\ell\times \ell}$ was proved in \cite[Lemma 5.2]{freefield}. Moreover, \cite[Lemma 5.2]{freefield} shows that the projection onto $\mathrm{ker}(H)$ lies in $\mf{M}^{\ell\times \ell}$, and 
    \begin{equation*}
        \mathrm{rank}(H)=\ell-\mathrm{Tr}_\ell\circ \tau^{(\ell)}(P_{\mathrm{ker}(H)}).
    \end{equation*}
\end{remark}

This notion of analytic rank is useful since it yields a condition for invertibility of matrices over $\wt{\mf{M}}$.

\begin{lem}[{\cite[Lemma 5.5]{freefield}}]\label{lem:analytic_rank_invertibility}
    $H\in \wt{\mf{M}}^{\ell\times \ell}$ is invertible if and only if $\mathrm{rank}(H)=\ell$.
\end{lem}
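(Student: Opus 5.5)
The plan is to work in the finite von Neumann algebra $\mf{M}^{\ell\times\ell} = \mf{M} \ov{\otimes} \C^{\ell\times\ell}$ with the faithful normal tracial weight $\mr{Tr}_\ell\circ\tau^{(\ell)}$. I will use that $\wt{\mf{M}}^{\ell\times\ell}$ is identified with $\widetilde{\mf{M}^{\ell\times\ell}}$, the $*$-algebra of closed densely defined operators on $\cJ^\ell$ affiliated to $\mf{M}^{\ell\times\ell}$. In this algebra, sum and product are the closures of the algebraic sum and composition. I take as given the preceding Remark: both $P_{\ker H}$ and $P_{\overline{\mathrm{Im}(H)}}$ lie in $\mf{M}^{\ell\times\ell}$, and $\mathrm{rank}(H) = \ell - \mr{Tr}_\ell\circ\tau^{(\ell)}(P_{\ker H})$.

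\emph{Forward direction.} Suppose $H$ is invertible in $\wt{\mf{M}}^{\ell\times\ell}$, with inverse $K$, so that $KH = I$ in the affiliated algebra. Then $K\cdot H$ agrees with the identity on $\mathrm{Dom}(H)$, since the closure of $K\circ H$ restricted to the domain of $K\circ H$ is just $K\circ H$. It follows that $H$ is injective, so $P_{\ker H} = 0$ and hence $\mathrm{rank}(H) = \ell$.

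\emph{Converse, step one.} Assume $\mathrm{rank}(H) = \ell$. Then $\mr{Tr}_\ell\circ\tau^{(\ell)}(P_{\ker H}) = 0$, and faithfulness of the trace gives $\ker H = \{0\}$. Next I take the polar decomposition $H = U|H|$. By affiliation, $U \in \mf{M}^{\ell\times\ell}$, with
\[ U^*U = I - P_{\ker H} = I \quad\text{and}\quad UU^* = P_{\overline{\mathrm{Im}(H)}}. \]
Since the algebra is finite, $U^*U = I$ forces $UU^* = I$; concretely, the trace of $I - UU^*$ equals $\mathrm{Tr}(I) - \mathrm{Tr}(U^*U) = 0$. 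Hence $H$ has dense range.

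\emph{Converse, step two.} Now $H$ is closed, injective and has dense range, so $K := H^{-1}$, defined on $\mathrm{Im}(H)$, is a closed densely defined operator. Its graph is the flip of the graph of $H$. For any unitary $u$ in the commutant of $\mf{M}^{\ell\times\ell}$ we have $uH \subseteq Hu$, hence $uK = Ku$, so $K$ is affiliated to $\mf{M}^{\ell\times\ell}$. On the dense domain $\mathrm{Dom}(H)$ we have $K\circ H = I$, and on $\mathrm{Im}(H)$ we have $H\circ K = I$. The closures of these compositions are the products in $\wt{\mf{M}}^{\ell\times\ell}$, and a densely defined restriction of the identity has closure $I$. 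Therefore $KH = HK = I$ in $\wt{\mf{M}}^{\ell\times\ell}$.

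The only delicate point is this last bookkeeping with strong products in the affiliated algebra, namely checking that the closed compositions really equal $I$. The alternative route is to quote Murray–von Neumann's result that a closed affiliated operator with trivial kernel in a finite algebra is automatically invertible in $\wt{\mf{M}}$. The essential use of finiteness is the step $U^*U = I \Rightarrow UU^* = I$.
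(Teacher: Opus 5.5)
The paper does not prove this lemma; it quotes it from Mai--Speicher--Yin \cite[Lemma 5.5]{freefield}, along with the facts in the preceding Remark. So there is no argument in the paper to compare against. Your proof is correct and self-contained, and it follows the standard route. Faithfulness of $\mathrm{Tr}_\ell\circ\tau^{(\ell)}$ plus finiteness gives trivial kernel and dense range. You then invert the closed operator and check affiliation, and that step-two bookkeeping is right.

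Two small points.

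In the forward direction, the sentence ``$K\cdot H$ agrees with the identity on $\mathrm{Dom}(H)$'' is not what you have shown. From $\overline{K\circ H}=I$ you only get $K\circ H=I|_{\mathrm{Dom}(K\circ H)}$. Here $\mathrm{Dom}(K\circ H)=\{x\in\mathrm{Dom}(H): Hx\in\mathrm{Dom}(K)\}$ may a priori be smaller than $\mathrm{Dom}(H)$. This still suffices: every $x\in\ker H$ lies in $\mathrm{Dom}(K\circ H)$, since $Hx=0\in\mathrm{Dom}(K)$, so $x=KHx=0$.

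In step one of the converse, the polar decomposition is not needed. By definition, $\mathrm{rank}(H)=\ell$ says $\mathrm{Tr}_\ell\circ\tau^{(\ell)}(I-P_{\overline{\mathrm{Im}(H)}})=0$, so dense range follows directly from faithfulness. Trivial kernel follows from the Remark's identity $\mathrm{rank}(H)=\ell-\mathrm{Tr}_\ell\circ\tau^{(\ell)}(P_{\ker H})$, which is the same computation $\mathrm{Tr}(UU^*)=\mathrm{Tr}(U^*U)$ that you carry out. Either way, finiteness of $\mf{M}^{\ell\times\ell}$ enters here. It also enters implicitly in the fact that the closed, densely defined affiliated operators form the $*$-algebra $\widetilde{\mf{M}^{\ell\times\ell}}\cong\wt{\mf{M}}^{\ell\times\ell}$ in which your inverse $K$ lives.
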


The following is an analogue of \cite[Theorem 5.6]{freefield} in our setting.

\begin{thm}
    Let $\Xi \in \mf{M} ^{1\times d}$ with $\|\Xi\|_\mathrm{row}<r$. The following are equivalent.
    \bi
    \item[(i)] For any $\ell\in\N$ and $H\in \scr{O} _d (r) ^{\ell \times \ell}$ we have: if $H$ is full, then $H(\Xi)\in \wt{\mf{M}}^{\ell\times \ell}$ is invertible.
    \item[(ii)] For any $\ell\in \N$ and $H\in \scr{O} _d (r) ^{\ell \times \ell}$, $\mathrm{rank} \, H(\Xi) =\rho(H)$, where $\rho$ denotes the inner rank over $\scr{O} _d (r)$.
    \item[(iii)] The homomorphism $\mathrm{ev}_\Xi:\scr{O} _d (r) \rightarrow \wt{\mf{M}}$ given by evaluation at $\Xi$ extends to an embedding $\mathrm{Ev}_\Xi:\scr{M} _d (r) \hookrightarrow \wt{\mf{M}}$.
    \ei
    Moreover, if the equivalent conditions above hold, then for all $H\in \scr{O} _d (r) ^{\ell \times \ell}$,
    \begin{equation*}
        \mathrm{rank}\, H(\Xi)=\rho(H)=\rho_{\wt{\mf{M}}}(H(\Xi)),
    \end{equation*}
    where $\rho_{\wt{\mf{M}}}$ denotes the inner rank over $\wt{\mf{M}}$.
\end{thm}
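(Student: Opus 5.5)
The strategy is to mirror the proof of \cite[Theorem 5.6]{freefield}. The only inputs are three facts. First, $\scr{O}_d(r)$ is a semifir with universal skew field of fractions $\scr{M}_d(r)$, and the embedding $\scr{O}_d(r)\hookrightarrow\scr{M}_d(r)$ is honest, so inner rank over $\scr{O}_d(r)$ equals rank over $\scr{M}_d(r)$. Second, $\wt{\mf{M}}$ is stably finite, with Lemma \ref{lem:analytic_rank_invertibility} characterizing invertibility through the analytic rank. Third, Theorem \ref{evalthm} guarantees well-defined evaluation of displays in stably finite algebras.

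\textbf{(i) $\Rightarrow$ (iii).} Given $f\in\scr{M}_d(r)$, represent it by a display $f\cong\bsm A & B\\ C& D\esm$ with $A$ full. By (i), $A(\Xi)$ is invertible in $\wt{\mf{M}}$, so we may set $\mr{Ev}_\Xi(f):=D(\Xi)-C(\Xi)A(\Xi)^{-1}B(\Xi)$. Theorem \ref{evalthm}, applied in $\scr{A}=\wt{\mf{M}}$, shows this is independent of the display. To see that $\mr{Ev}_\Xi$ is a ring homomorphism, use the standard display constructions for sums and products (block-triangular $A$-parts, which stay full) and evaluate them. Injectivity is automatic, since a nonzero $f$ has $f^{-1}$ in the skew field and $\mr{Ev}_\Xi(f)\mr{Ev}_\Xi(f^{-1})=1$.

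\textbf{(iii) $\Rightarrow$ (ii).} Let $\rho(H)=\rho$. Over the skew field $\scr{M}_d(r)$ we can write $H=P\bsm I_\rho&0\\0&0\esm Q$ with $P,Q$ invertible over $\scr{M}_d(r)$. Applying $\mr{Ev}_\Xi$ entrywise, $H(\Xi)$ equals an invertible matrix over $\wt{\mf{M}}$ times $\bsm I_\rho&0\\0&0\esm$ times another invertible matrix. Since the analytic rank is invariant under multiplication by invertibles (the closure of the image is transformed by an invertible affiliated operator, and dimension is preserved by \cite[Lemma 5.2]{freefield}-type arguments), $\mr{rank}\,H(\Xi)=\rho$. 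The same factorization shows $\rho_{\wt{\mf{M}}}(H(\Xi))\le\rho$. For the reverse inequality, a factorization $H(\Xi)=BC$ through $\wt{\mf{M}}^{\ell\times s}$ forces $\mr{rank}\,H(\Xi)\le s$, by a dimension count of images in the finite von Neumann algebra. This gives the ``moreover'' clause.

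\textbf{(ii) $\Rightarrow$ (i).} If $H$ is full then $\rho(H)=\ell$, so $\mr{rank}\,H(\Xi)=\ell$, and Lemma \ref{lem:analytic_rank_invertibility} gives invertibility.

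The main obstacle is the (iii) $\Rightarrow$ (ii) step. It requires checking that the evaluation of entries of $\scr{M}_d(r)$ behaves entrywise compatibly with the matrix factorization. It also requires the invariance of analytic rank under invertible affiliated multipliers together with the inequality $\mr{rank}\le\rho_{\wt{\mf{M}}}$. Both of these I would import from \cite[Section 5]{freefield} essentially verbatim, since they concern only $\wt{\mf{M}}$ and not the coefficient ring.
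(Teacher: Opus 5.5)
Your proposal is correct and follows the paper's route. The paper gets (ii)$\Rightarrow$(i) from Lemma \ref{lem:analytic_rank_invertibility}, and (i)$\Rightarrow$(iii) from well-definedness of display evaluation in the stably finite algebra $\wt{\mf{M}}$ (Theorem \ref{evalthm}); it gets (iii)$\Rightarrow$(ii) and the rank equalities by transplanting the argument of \cite[Theorem 5.6]{freefield}, which needs only semifir/universal-skew-field facts on the coefficient side and rank facts about $\wt{\mf{M}}$. Your normal form $H = P\,\mathrm{diag}(I_\rho,0)\,Q$ over $\scr{M}_d(r)$, pushed through $\mr{Ev}_\Xi$, together with your sum/product displays and injectivity argument, spells out exactly the details the paper leaves to that reference.
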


\begin{proof}
    The implication (ii)$\implies$(i) follows from Lemma \ref{lem:analytic_rank_invertibility}, while the implication (i)$\implies$(iii) follows as in the proof of Corollary \ref{cor:embedding_skew_field}. To show (iii)$\implies$(ii) one can simply follow the proof of (iv)$\implies$(iii) in \cite[Theorem 5.6]{freefield}. This works since the facts used in the argument are general facts that hold for semifirs and their universal skew fields. 

    In the case when the equivalent conditions (i), (ii) and (iii) are satisfied, one can deduce the equality of the different ranks by again following the proof of \cite[Theorem 5.6]{freefield}.
 \end{proof}

\begin{cor} \label{cor:atom_weight}
    Let $\Xi \in \mf{M}^{1\times d}$ with $\|\Xi\|_{\mathrm{row}} < r$ be a tuple of self-adjoints. Let $H \in \cO_d(r)^{\ell \times \ell}$ be self-adjoint. Let $\mu$ be the spectral measure of $H(\Xi)$. Then, for every $t \in \R$, $\mu(\{t\}) \in \{0,\frac{1}{\ell},\cdots,\frac{\ell - 1}{\ell}, 1\}$.
\end{cor}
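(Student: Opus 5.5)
The plan is to reduce the atom $\mu(\{t\})$ to an analytic rank computation and then use the rank identity in the preceding theorem. I read the statement as carrying the standing hypothesis of that theorem, namely $\Delta_1(\Xi)=d$, so that by Theorem \ref{main1} condition (i) holds and hence all of (i)--(iii) hold. Without some such hypothesis the conclusion cannot be expected.

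\emph{Setup.} Since $\Xi$ is a self-adjoint tuple and $H=H^*$ over $\cO_d(r)$ in the entrywise involution sense, the operator $H(\Xi)\in\mf{M}^{\ell\times\ell}$ is self-adjoint. Here $\mu$ is its spectral measure with respect to the normalized trace $\tr_\ell\otimes\tau = \frac1\ell \mathrm{Tr}_\ell\circ\tau^{(\ell)}$. For each $t\in\R$, the spectral theorem gives
\[ \mu(\{t\}) = \tfrac{1}{\ell}\,\mathrm{Tr}_\ell\circ\tau^{(\ell)}\bigl(P_{\nbker (H(\Xi)-tI)}\bigr). \]
By the remark following Definition \ref{def:analytic_rank}, this equals $\tfrac1\ell\bigl(\ell-\mathrm{rank}(H(\Xi)-tI)\bigr)$.

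\emph{Key step.} Put $H_t:=H-tI_\ell\in\cO_d(r)^{\ell\times\ell}$. This is legitimate because constants lie in $\cO_d(r)$, and evaluation is a unital homomorphism, so $H_t(\Xi)=H(\Xi)-tI$. The equivalent conditions of the preceding theorem hold, so part (ii) gives $\mathrm{rank}\,H_t(\Xi)=\rho(H_t)$, the inner rank over $\cO_d(r)$. This is an integer in $\{0,\dots,\ell\}$. Hence
\[ \mu(\{t\}) = \frac{\ell-\rho(H-tI_\ell)}{\ell}\in\Bigl\{0,\tfrac1\ell,\dots,\tfrac{\ell-1}{\ell},1\Bigr\}. \]

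\emph{Expected obstacle.} The only delicate point is the one flagged above: the corollary needs $\Xi$ to satisfy the equivalent conditions (i)--(iii), for instance via $\Delta_1(\Xi)=d$ and Theorem \ref{main1}. Once that is granted, the argument is just the spectral theorem combined with the integrality of the inner rank, exactly as in \cite[Theorem 5.6]{freefield}. A secondary check is the normalization: the spectral measure uses $\tr_\ell\otimes\tau$, while the analytic rank uses the unnormalized $\mathrm{Tr}_\ell$, which accounts for the factor $\frac1\ell$.
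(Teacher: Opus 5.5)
Your proposal is correct and follows the paper's own argument: replace $H$ by $H-tI_\ell$, write the atom as the normalized trace of the kernel projection, and invoke $\mathrm{rank}\,H(\Xi)=\rho(H)$ from the preceding theorem. You are also right that a hypothesis such as $\Delta_1(\Xi)=d$ must be added, since the paper's proof uses this rank identity tacitly and the statement fails without it (e.g.\ $d=\ell=1$, $H=\fz_1$, $\Xi_1$ a projection of trace $\tfrac13$, where $\mu(\{0\})=\tfrac23$).
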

\begin{proof}
    By replacing $H$ with $H-t$ it suffices to consider $\mu(\{0\})$. Now 
    \[
    \mu(\{0\}) = \tr_{\ell} \circ \tau^{(\ell)}(P_{\ker(H)}) = \frac{\ell - \rho(H)}{\ell} = 1 - \frac{\rho(H)}{\ell} \in \{0,\frac{1}{\ell},\cdots,\frac{\ell - 1}{\ell}, 1\}.
    \]
\end{proof}

\section{The inequality $\Delta_1 < \Delta$} \label{sec:inequality}

Let $f(x,y) = \sum_{n=0}^{\infty} \frac{1}{n!} x^n y^n \in \cO_2$.  We will use the fact that $f$ resembles an exponential. As is well known, the exponential is a transcendental function and therefore, the graph of the exponential is Zariski-dense. Our goal is to create operators that do not satisfy any finite-rank relations, but satisfy a trace-class one.

\begin{lem} \label{lem:random_basis}
For every $m \in \N$ and a generic $T \in \GL_m(\C)$, the matrices $\{e_i e_i^* T e_j e_j^* T^{-1}\}_{i,j=1}^m$ are linearly independent and, thus, form a basis for $\C ^{m\times m}$.    
\end{lem}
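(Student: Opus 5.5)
The matrices $M_{ij}(T) := e_i e_i^* T e_j e_j^* T^{-1} = t_{ij}\, e_i \rho_j(T)$, where $\rho_j(T) := e_j^* T^{-1}$ is the $j$-th row of $T^{-1}$ and $t_{ij} = e_i^* T e_j$. There are $m^2$ of them, so linear independence is the same as being a basis of $\C^{m\times m}$. The plan is to show that the determinant of the $m^2\times m^2$ coefficient matrix is a rational function on $\GL_m(\C)$ that does not vanish identically. Then it is nonzero on a Zariski-open dense set, which is the meaning of ``generic''.

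First I make the polynomiality explicit. Let $\mathcal{M}(T)$ be the $m^2\times m^2$ matrix whose $(i,j)$-th column is $\mathrm{vec}(M_{ij}(T))$. Multiplying by $\det T$ clears the denominators coming from $T^{-1}$, so
\[
p(T) := \det\left( \mathcal{M}(T)\right) (\det T)^{m^2}
\]
is a polynomial in the entries of $T$. The set where $p \neq 0$ is Zariski-open in the irreducible variety $\GL_m(\C)$. So it suffices to exhibit one $T_0 \in \GL_m(\C)$ with linearly independent $M_{ij}(T_0)$.

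Next I simplify the linear algebra. Suppose $\sum_{i,j} c_{ij} M_{ij}=0$. Multiplying on the right by $T$ turns $\rho_j(T)$ into $e_j^*$, so this is equivalent to
\[
\sum_{i,j} c_{ij} t_{ij}\, e_i e_j^* = 0 ,
\]
that is, $c_{ij} t_{ij}=0$ for all $i,j$. So the family is linearly independent if and only if every entry $t_{ij}$ of $T$ is nonzero. This follows from the rank-one structure $M_{ij} = t_{ij} e_i e_j^* T^{-1}$, since the $e_i e_j^*$ form a basis and right multiplication by $T^{-1}$ is invertible.

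It remains to find an invertible matrix with all entries nonzero. Examples are $T_0 = I + J$, with $J$ the all-ones matrix (eigenvalues $1$ and $m+1$), or a Vandermonde matrix with distinct nonzero nodes. The generic set is then explicit: $\{T\in\GL_m(\C): \prod_{i,j} t_{ij}\neq 0\}$, which is nonempty, Zariski-open and dense.

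The only step needing care is checking that $M_{ij}$ really equals $t_{ij} e_i e_j^* T^{-1}$. This is the expansion $e_i (e_i^* T e_j)(e_j^* T^{-1})$, so the argument is essentially routine. If the intended statement instead places the transposed pairing, I would redo the same reduction. It again collapses to nonvanishing of the entries of $T$ or $T^{-1}$, and the same $T_0$ works, since $T_0^{-1} = I - \tfrac{1}{m+1}J$ also has all entries nonzero.
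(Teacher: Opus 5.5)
Your proposal is correct and follows the paper's argument: right-multiplying by $T$ turns $\sum c_{ij}\,t_{ij}e_ie_j^*T^{-1}=0$ into $\sum c_{ij}\,t_{ij}e_ie_j^*=0$, so independence is equivalent to all entries $t_{ij}$ being nonzero. The determinant-polynomial preamble is unnecessary, since this equivalence already describes the generic set explicitly, but your example $T_0=I+J$ usefully confirms that the set is nonempty, a point the paper leaves implicit.
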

\begin{proof}
    Write $T = (t_{ij})_{i,j=1}^m$. Write $T = (t_{ij})$ and $T^{-1} = \frac{1}{\det T} (\tau_{ij})$, where each $\tau_{ij}$ is a polynomial in $t_{ij}$. Write
    \[
    S_{ij} = e_i e_i^* T e_j e_j^* T^{-1} = t_{ij} e_i e_j^* T^{-1}.
    \]
    Note that right multiplication by an invertible matrix is an invertible linear map on $\C ^{m \times m}$. Hence, since $S_{ij} T = t_{ij} e_i e_j^{*}$ the set $\{ S _{i,j} \}$ is linearly independent if and only if all $t_{ij}$ are non-zero.
\end{proof}

\begin{lem} \label{lem:alg_indep}
    Let $p_1,\cdots,p_n \in \C[z_1,\cdots,z_d]$ be a set of polynomials, such that the set $\{1, p_1, \cdots, p_n\}$ is linearly independent. Then, the set of functions $e^{p_1}, \cdots, e^{p_n}$ is algebraically independent and transcendental over $\C(z_1,\cdots,z_d)$.
\end{lem}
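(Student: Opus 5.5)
This is a special case of the Ax--Lindemann--Weierstrass theorem for exponentials of polynomials. I plan to prove it directly by a Wronskian/derivation argument in the style of the classical proof that distinct exponentials are linearly independent over rational functions.

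\emph{Reduction to linear independence.} Suppose $Q(w_1,\cdots,w_n) = \sum_{\alpha} c_\alpha(z) w^\alpha$ is a nonzero polynomial with coefficients $c_\alpha \in \C(z_1,\cdots,z_d)$ and $Q(e^{p_1},\cdots,e^{p_n}) = 0$. Expanding gives
\[
\sum_\alpha c_\alpha\, e^{\alpha\cdot p} = 0, \qquad \alpha\cdot p := \sum_i \alpha_i p_i .
\]
Since $\{1,p_1,\cdots,p_n\}$ is linearly independent over $\C$, distinct multi-indices $\alpha\neq\beta \in \N^n$ give polynomials $\alpha\cdot p - \beta\cdot p$ that are non-constant. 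So it suffices to prove the following claim: if $q_1,\cdots,q_N \in \C[z]$ have pairwise non-constant differences, then $e^{q_1},\cdots,e^{q_N}$ are linearly independent over $\C(z)$. Transcendence of each $e^{p_i}$ is the case $n=1$, or simply the case where $Q$ involves a single variable.

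\emph{Proof of the claim.} I will argue by induction on $N$; the case $N=1$ is trivial since $e^{q_1}\neq 0$. Suppose $\sum_{k=1}^N r_k e^{q_k} = 0$ with $r_k \in \C(z)$ not all zero, and take $N$ minimal, so that all $r_k \neq 0$. Dividing by $r_N e^{q_N}$ gives
\[
\sum_{k<N} s_k e^{q_k - q_N} + 1 = 0 .
\]
Choose a coordinate direction $\partial = \partial_{z_j}$ along which some difference $q_k - q_N$ is non-constant; after the substitution below I will arrange that every difference is non-constant in this direction. Applying $\partial$ yields
\[
\sum_{k<N} \bigl(\partial s_k + s_k\, \partial(q_k - q_N)\bigr) e^{q_k - q_N} = 0 ,
\]
which is a shorter relation. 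By minimality, every coefficient must vanish, so $\partial s_k = -s_k\, \partial(q_k - q_N)$ for each $k$.

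\emph{Main obstacle.} The step I expect to be delicate is making this last equation contradictory. Restrict to a generic complex line $z = a + t b$. Choose it so that all differences $q_k - q_N$ remain non-constant polynomials in $t$ (this is possible since each such difference is non-constant), and so that all the $s_k$ restrict to well-defined, nonzero rational functions of $t$. Along such a line the vanishing condition reads $s_k'/s_k = -(q_k - q_N)'$. The left side is a logarithmic derivative of a rational function, so it has at most simple poles and decays like $O(1/t)$ at infinity. The right side is a nonzero polynomial. This is impossible, so the contradiction completes the induction.

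Consequently the $e^{\alpha\cdot p}$ are linearly independent over $\C(z)$, so all $c_\alpha = 0$ and $Q = 0$. Hence the $e^{p_i}$ are algebraically independent over $\C(z)$, and each one is transcendental.
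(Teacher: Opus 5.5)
Your proof is correct. The reduction is the same as the paper's: expand the relation as $\sum_\alpha c_\alpha e^{\alpha\cdot p}=0$, and note that linear independence of $\{1,p_1,\cdots,p_n\}$ makes every difference $\alpha\cdot p-\beta\cdot p$ with $\alpha\neq\beta$ non-constant.

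Where you differ is the key input. The paper invokes a theorem of Narasimhan as a black box: exponentials of polynomials that are linearly dependent over $\C[z_1,\cdots,z_d]$ must have two exponents differing by a constant. You instead prove the special case you need directly, by the classical induction-plus-differentiation argument. You finish with the fact that the logarithmic derivative of a nonzero one-variable rational function is $O(1/t)$ at infinity, so it cannot equal a nonzero polynomial. Your route is self-contained and elementary; the paper's is a one-line appeal to the literature.

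Two small points to tidy up. First, if you differentiate with $\partial=\partial_{z_j}$, you must restrict to lines $z=a+te_j$ with $a$ generic, since only for those does $\partial$ restrict to $d/dt$. Alternatively, take $\partial=\sum_i b_i\partial_{z_i}$ for a generic direction $b$ and restrict to lines $a+tb$. Second, you do not need every difference to be non-constant in the chosen direction. Minimality forces all $s_k\neq 0$, so a single $k$ with $\partial(q_k-q_N)\neq 0$ already yields the contradiction.
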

\begin{proof}
    Assume that our functions, $e^{p_i}$, satisfy a polynomial relation $q 
    \in \C(z_1,\cdots,z_d)[w_1,\cdots, w_n]$ (the coefficients are rational functions of the $z_i$). Clearing denominators, we may assume that the coefficients of $q$ are polynomials. Write $q = \sum_{\alpha} q_{\alpha} w^{\alpha}$. Hence,
    \[
    0 = q(e^{p_1},\cdots,e^{p_n}) = \sum_{\alpha} q_{\alpha} e^{\sum_{j=1}^n \alpha_j p_j}.
    \]
    In other words, the functions $e^{\sum_{j=1}^n \alpha_j p_j}$, where we run over the multi-indices of $q$ with non-zero coefficients are linearly dependent over $\C[z_1,\cdots,z_d]$. By a theorem of Narasimhan \cite[Theoreme 1]{Narasimhan-alg_indep}, this can only happen, if there exist two distinct indices $\alpha$ and $\beta$, such that $\sum_{j=1}^n (\alpha_j - \beta_j) p_j$ is a scalar. This contradicts our assumption of linear independence and the proof is complete.
\end{proof}

\begin{lem} \label{lem:determinant_hypersurface}
    For any $n \in \N$, if $P \in \C\langle x,y,z \rangle ^{n \times n}$ is a full matrix, then there exists $m \in \N$, and $(X,Y) \in \C ^{m \times m} \otimes \C ^{1\times 2}$, such that $\det P(X,Y,f(X,Y)) \neq 0$. Hence, $P(x,y,f(x,y)) \in \cO_2 ^{n\times n}$ is full.
\end{lem}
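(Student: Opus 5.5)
The plan is to reduce fullness over $\cO_2$ to a transcendence statement about exponentials, evaluated at a well-chosen family of matrix points. By Amitsur's theorem (equivalently, because $\fskew$ is the universal skew field of the free algebra, as recalled in the introduction), a full $P \in \C\langle x,y,z\rangle^{n\times n}$ is invertible at some matrix point. Hence there is an $m\in\N$ for which
\[
(X,Y,Z)\mapsto \det P(X,Y,Z)
\]
is a nonzero polynomial on $(\C^{m\times m})^3$. Its non-vanishing set is then Zariski open and dense. The second assertion follows from the first: once $\det P(X,Y,f(X,Y))\neq 0$ at one point, \cite[Lemma 5.8]{AMS-ncentire} says $P(x,y,f(x,y))$ is full over $\cO_2$.

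First I will restrict to a parametrized family of points. Set $X=D_x:=\mr{diag}(x_1,\dots,x_m)$ and $Y=TD_yT^{-1}$ with $D_y=\mr{diag}(y_1,\dots,y_m)$ and $T\in\GL_m(\C)$. Pairs of diagonalizable matrices form a dense set. Moreover $\det P$ is invariant under simultaneous similarity, so we may always conjugate the pair so that $X$ is diagonal. It follows that
\[
(x,y,T,Z)\mapsto \det P(D_x,TD_yT^{-1},Z)
\]
is not identically zero. I then fix a generic $T$ satisfying two conditions. First, this polynomial remains nonzero as a function of $(x,y,Z)$. Second, all entries $t_{ij}$ are nonzero, so that by Lemma \ref{lem:random_basis} the matrices $S_{ij}=e_ie_i^*Te_je_j^*T^{-1}$ form a basis of $\C^{m\times m}$. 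Both conditions define nonempty Zariski open subsets of $\GL_m(\C)$, so such a $T$ exists.

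Next I compute $f$ on this family. We have $X^nY^n=D_x^nTD_y^nT^{-1}=\sum_{i,j}x_i^ny_j^nS_{ij}$, and therefore
\[
f(X,Y)=\sum_{i,j}e^{x_iy_j}S_{ij}.
\]
Write $Z=\sum w_{ij}S_{ij}$ in the basis $\{S_{ij}\}$. Then $q(x,y,w):=\det P(D_x,TD_yT^{-1},\sum w_{ij}S_{ij})$ is a nonzero polynomial in $x$, $y$, $w$, and the function we must show is not identically zero is $q(x,y,(e^{x_iy_j}))$. The polynomials $1$ and $x_iy_j$ for $1\le i,j\le m$ are linearly independent. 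Lemma \ref{lem:alg_indep} therefore gives that the functions $e^{x_iy_j}$ are algebraically independent over $\C(x,y)$. Viewing $q$ as a nonzero polynomial in $w$ with coefficients in $\C[x,y]$, we get that $q(x,y,e^{xy})$ is not identically zero. Choosing $(x,y)$ where it is nonzero gives the required point $(X,Y)$.

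The main obstacle is justifying the first step: that restricting to $X$ diagonal and $Y=TD_yT^{-1}$, and then fixing one generic $T$, still leaves a nonzero polynomial. The approach is density of simultaneously diagonalizable-type pairs modulo conjugation, together with the fact that a finite intersection of nonempty Zariski open sets in $\GL_m(\C)$ is nonempty.
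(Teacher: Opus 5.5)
Your proposal is correct and follows essentially the same route as the paper: diagonalize $X$, write $Y=TD_yT^{-1}$, expand $f(X,Y)=\sum_{i,j}e^{x_iy_j}S_{ij}$, and combine Lemma \ref{lem:random_basis} with Lemma \ref{lem:alg_indep}. The difference is that you work directly with the single invariant $\det P$ and fix a generic $T$ before invoking algebraic independence (so you only need it over $\C(x,y)$, exactly as Lemma \ref{lem:alg_indep} is stated), whereas the paper proves Zariski density of the graph in $\C^{(m\times m)\cdot 3}/\GL_m(\C)$ via Procesi's theorem; your version bypasses that step and handles the genericity of $T$ more explicitly.
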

\begin{proof}
    To prove the claim it suffice to show that for every $m \in \N$, the graph of $f$ is Zariski dense in $\C ^{(m\times m)\cdot 3}/\GL_m(\C)$. Namely, if 
    \[
    \Gamma_m(f) = \left\{ (X, Y, f(X,Y)) \mid X, Y \in \C ^{m\times m} \right\},
    \]
    then we want to show that $\overline{\pi(\Gamma_m(f))}^{\mr{Zar}} = \C ^{(m\times m)\cdot 3}/\GL_m(\C)$, where $\pi$ is the map dual to the embedding of $\GL_m(\C)$-invariant polynomials in the polynomial ring on the coordinates of the matrices in $\C ^{(m \times m) \cdot 3}$. Indeed, if we know this claim, then given a full $P \in\C\langle x,y,z\rangle ^{n \times n}$, we know that there exists some $m_0$, such that $\det P(\Omega_x^{(m_0)}, \Omega_y^{(m_0)}, \Omega_z^{(m_0)}) \neq 0$, where the $\Omega$'s are the generic $m_0 \times m_0$ matrices. Since this is a non-trivial $\GL_m(\C)$-invariant polynomial, it cannot vanish identically on $\Gamma_{m_0}(f)$. This proves the claim.

    To prove the claim about the graph, we fix $m \in \N$ and note that the set of all points, such that the first two matrices are simple (namely, diagonalizable with distinct eigenvalues) is dense in $\C ^{(m\times m)\cdot 3}$ in the classical topology and, thus, also in the Zariski topology. Applying a similarity we can change every such point to another point, where both $X$ and $Y$ are simple and $X$ is diagonal. Write $\lambda_1,\cdots,\lambda_m$ and $\mu_1,\cdots,\mu_m$ for the eigenvalues of $X$ and $Y$ respectively. In particular, there is an invertible matrix $T$, such that
    \[
    X = \sum_{j=1}^m \lambda_j e_j e_j^* \text{ and } Y = \sum_{j=1}^m \mu_j T e_j e_j^* T^{-1}.
    \]
    Of course, $T$ is not uniquely defined and we may assume that $\det(T) = 1$. Therefore, $T^{-1}$ is the adjugate matrix of $T$ and its entries are polynomials in the entries of $T$. Moreover, the entries of $T$ are independent of the variables of the eigenvalues of $X$ and $Y$. In this presentation for every $n \in \N$,
    \[
    X^n Y^n = \left(\sum_{i=1}^m \lambda_i^n e_i e_i^* \right) \left( \sum_{j=1}^m \mu_j^n T e_j e_j^* T^{-1} \right) = \sum_{i,j = 1}^m \lambda_i^n \mu_j^n e_i e_i^* T e_j e_j^* T^{-1}.
    \]
    Therefore,
    \[
    f(X,Y) = \sum_{n=0}^{\infty} \frac{1}{n!} X^n Y^n = \sum_{i,j =1}^m e^{\lambda_i \mu_j} e_i e_i^* T e_j e_j^* T^{-1}.
    \]
    let $p$ be a $\GL_n(\C)-$invariant polynomial that vanishes on $\Gamma_m(f)$. By the results of Procesi, the ring of $\GL_n(\C)-$invariant polynomials is generated by traces of monomials in generic matrices \cite[Theorem 1.3]{Procesi-invariant_theory}. In particular, if we evaluate $p$ on $(X,Y,f(X,Y))$, where $X$ and $Y$ are as above, then $p$ gives us an algebraic relation in $\lambda_i$, $\mu_j$, the coordinates of $T$ and $e^{\lambda_i \mu_j}$, the latter are, however, transcendental and algebraically independent over the rational functions in the variables by Lemma \ref{lem:alg_indep}. Therefore, if we replace the $e^{\lambda_i \mu_j}$ by independent variables $z_{ij}$, we obtain that
    \[
    p\left( X,Y, \sum_{i,j= 1}^m z_{ij} e_i e_i^* T e_j e_j^* T^{-1} \right) = 0.
    \]
    By the preceding lemma, for a generic $T$, the matrices $e_i e_i^* T e_j e_j^* T^{-1}$ for a basis for $\C ^{m\times m}$. Varying the $z_{ij}$ makes the last coordinate arbitrary on a Zariski dense set of $Y$'s. Hence, this polynomial vanishes on a Zariski dense set and, therefore, $p=0$.
    \end{proof}

\begin{thm} \label{thm:counter_example}
    Let $\Xi_1,\Xi_2$ be standard free semicirculars and let $\Xi_3 = f(\Xi_1,\Xi_2)$, where $f(x,y) = \sum_{n=0}^{\infty} \frac{1}{n!} x^n y^n \in \cO_2$. Then, $$\Delta_1(\Xi_1,\Xi_2,\Xi_3) < 3 = \Delta(\Xi_1,\Xi_2,\Xi_3).$$
\end{thm}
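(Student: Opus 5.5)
We prove the two inequalities separately. The strict inequality $\Delta_1(\Xi_1,\Xi_2,\Xi_3)<3$ comes from the analytic relation $\Xi_3 = f(\Xi_1,\Xi_2)$. The equality $\Delta(\Xi_1,\Xi_2,\Xi_3)=3$ comes from the Mai--Speicher--Yin characterization of maximal $\Delta$ together with Lemma \ref{lem:determinant_hypersurface}.

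\textbf{The strict inequality $\Delta_1<3$.} We argue by contradiction and use Theorem \ref{main1}. Suppose $\Delta_1(\Xi_1,\Xi_2,\Xi_3)=3$. Consider the $1\times 1$ matrix $H(\fz_1,\fz_2,\fz_3) := \fz_3 - f(\fz_1,\fz_2) \in \scr{O}_3$.
\begin{itemize}
\item $H$ is full. It is a nonzero element of the domain $\scr{O}_3$, so it has inner rank $1$. Equivalently, it is invertible at some scalar point, for instance $(0,0,2)$, where its value is $2-f(0,0)=1$.
\item The row-norm hypothesis holds. Since $f\in\scr{O}_2$ is entire, $H\in\scr{O}_3(r)$ for every $r$, and we may take $r>\|(\Xi_1,\Xi_2,\Xi_3)\|_{\mr{row}}$.
\end{itemize}
Evaluation gives $H(\Xi_1,\Xi_2,\Xi_3) = \Xi_3 - f(\Xi_1,\Xi_2) = 0$. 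We should check that evaluating $f$ through $\scr{O}_3$ agrees with the definition $\Xi_3 = f(\Xi_1,\Xi_2)$; this holds because both are given by the same norm-convergent series. So $H(\Xi)$ has full kernel, contradicting Theorem \ref{main1}. Hence $\Delta_1<3$.

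\textbf{The equality $\Delta=3$.} Since $\Delta\le 3$ always, we must show $\Delta\ge 3$. By \cite[Theorem 1.1]{freefield}, $\Delta(\Xi_1,\Xi_2,\Xi_3)=3$ holds if and only if $P(\Xi_1,\Xi_2,\Xi_3)$ has no kernel for every full $P\in\C\langle x,y,z\rangle^{n\times n}$. (We use the equivalence (i)$\Leftrightarrow$(ii) of that theorem, applied to full matrices over the free algebra, or to linear full matrices.) Fix such a full $P$. By Lemma \ref{lem:determinant_hypersurface}, the matrix $\wt{P}(x,y) := P(x,y,f(x,y))$ is full in $\scr{O}_2^{n\times n}$. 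The pair $(\Xi_1,\Xi_2)$ of free semicirculars has $\delta^*=2$, and by \cite{CharShly-analytic} this gives $\Delta_1(\Xi_1,\Xi_2)=2$. The pair is bounded and $\scr{O}_2=\scr{O}_2(\infty)$, so the row-norm condition is automatic. Theorem \ref{main1} then says $\wt{P}(\Xi_1,\Xi_2)$ has no kernel or co-kernel.

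It remains to identify $\wt{P}(\Xi_1,\Xi_2)$ with $P(\Xi_1,\Xi_2,\Xi_3)$. This holds because evaluation $\mr{ev}_{(\Xi_1,\Xi_2)}\colon\scr{O}_2\to\mf{M}$ is a unital homomorphism, and substituting $f$ for $z$ commutes with evaluation. Hence $P(\Xi_1,\Xi_2,\Xi_3)$ is injective for every full $P$, and so $\Delta(\Xi_1,\Xi_2,\Xi_3)=3$.

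\textbf{Main obstacle.} The delicate step is invoking \cite[Theorem 1.1]{freefield} with exactly the right hypothesis: full matrices over the free algebra in three variables, evaluated at a tuple that need not be self-adjoint. The tuple here is not self-adjoint. Mai--Speicher--Yin explicitly allow arbitrary tuples, so I would cite their direction ``no full matrix has kernel $\Rightarrow \Delta = d$''. Everything else reduces to Lemma \ref{lem:determinant_hypersurface} and Theorem \ref{main1}.
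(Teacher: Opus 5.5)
Your proposal is correct and follows the paper's proof. Lemma \ref{lem:determinant_hypersurface}, together with Theorem \ref{main1} applied to the free semicirculars, shows that $P(\Xi_1,\Xi_2,\Xi_3)$ has no kernel for every full $P$, so the Mai--Speicher--Yin characterization gives $\Delta=3$; the analytic relation $\fz_3-f(\fz_1,\fz_2)$ combined with Theorem \ref{main1} gives $\Delta_1<3$. You also usefully make explicit the step $\Delta_1(\Xi_1,\Xi_2)=2$ via $\delta^*(\Xi_1,\Xi_2)=2\le\Delta_1(\Xi_1,\Xi_2)$, which the paper leaves implicit.
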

\begin{proof}
    The preceding lemma combined with the fact that every full matrix over $\cO_2$ evaluated on $\Xi_1,\Xi_2$ is invertible as a closed operator affiliated to the von Neumann algebra generated by $\Xi_1$ and $\Xi_2$. Therefore, we can apply \cite[Theorem 1.1]{freefield} to conclude that $\Delta(\Xi_1,\Xi_2,\Xi_3) = 3$. However, since there is an analytic relation between the $\Xi_i$, \cite[Theorem 4]{CharShly-analytic} or Theorem \ref{main1} imply that $\Delta_1(\Xi_1,\Xi_2,\Xi_3) < 3$.
\end{proof}

\begin{remark}
    The same argument can be applied to the function $g(x,y) = \sum_{n=0}^{\infty} \frac{1}{n!} x^n y^n x^n$, since for $X$ and $Y$ both simple (as in the proof above), we have that 
    \[
    X^n Y^n X^n = \sum_{i,j,k=1}^m \lambda_i^n \mu_j^n \lambda_k^n e_i e_i^* T e_j e_j^* T^{-1} e_k e_k^* = \sum_{i,j,k=1}^m \lambda_i^n \mu_j^n  \lambda_k^n t_{ij} \tilde{t}_{jk} e_i e_k^*
    \]
    Here, $\tilde{t}_{jk}$ is the $j,k$-the entry of $T^{-1}$. Hence,
    \[
    f(X,Y) = \sum_{i,j,k=1}^m e^{\lambda_i \mu_j \lambda_k} t_{ij} \tilde{t}_{jk} e_i e_k^*.
    \]
    Now the matrices $\{t_{ij} \tilde{t}_{jk} e_i e_k^*\}_{i,j,k=1}^m$ span the $m\times m$ matrices for generic choice of $T$.

    The advantage of this function is that for self-adjoint $X$ and $Y$, $f(X,Y)$ is also self-adjoint. Therefore, if $\Xi_1, \Xi_2$ are free semicirculars, then 
    \[
    \delta^*(\Xi_1,\Xi_2,g(\Xi_1,\Xi_2)) \leq \Delta_1(\Xi_1,\Xi_2,g(\Xi_1,\Xi_2)) < 3 = \Delta (\Xi_1,\Xi_2,g(\Xi_1,\Xi_2)).
    \]
    Moreover, by \cite[Proposition 6.4]{Voic-entropy_II} and \cite[Corollary 6.13]{Voic-entropy_II}, 
    \[
    \delta^*(\Xi_1,\Xi_2,g(\Xi_1,\Xi_2)) = \delta^*(\Xi_1,\Xi_2) = 2.
    \]
\end{remark}

\section{Tool and computational resource disclosure}

In compliance with the Leiden declaration on AI and Mathematics, we include this section. We have used ChatGPT v5.6 Sol for literature searches and for ideas on proving Lemma \ref{lem:determinant_hypersurface}. An earlier version of Theorem 7.4 for free semicirculars was obtained by ChatGPT v5.6 Sol. However, as we later realized the construction used was already in the work of K. Dykema \cite{Dykema-free_prod}. The paper and the proofs were written by the authors and we take full responsibility for them.

\setstretch{1}
\setlength{\parskip}{0pt}
\setlength{\itemsep}{0pt}


\end{document}